\DeclareMathOperator*{\Tr}{Tr\,}
\numberwithin{equation}{section}
\def\@tocline#1#2#3#4#5#6#7{\relax
  \ifnum #1>\c@tocdepth 
  \else
    \par \addpenalty\@secpenalty\addvspace{#2}%
    \begingroup \hyphenpenalty\@M
    \@ifempty{#4}{%
      \@tempdima\csname r@tocindent\number#1\endcsname\relax
    }{%
      \@tempdima#4\relax
    }%
    \parindent\z@ \leftskip#3\relax \advance\leftskip\@tempdima\relax
    \rightskip\@pnumwidth plus4em \parfillskip-\@pnumwidth
    #5\leavevmode\hskip-\@tempdima
      \ifcase #1
       \or\or \hskip 1em \or \hskip 2em \else \hskip 3em \fi%
      #6\nobreak\relax
    \dotfill\hbox to\@pnumwidth{\@tocpagenum{#7}}\par
    \nobreak
    \endgroup
  \fi}
\theoremstyle{plain} \newtheorem{theorem}{Theorem}[section]
\theoremstyle{plain} \newtheorem{proposition}[theorem]{Proposition}
\theoremstyle{plain} 
\theoremstyle{plain} \newtheorem{lemma}[theorem]{Lemma}
\theoremstyle{plain} \newtheorem{corollary}[theorem]{Corollary}
\theoremstyle{definition} 
\theoremstyle{definition} 
\theoremstyle{remark} \newtheorem{remark}[theorem]{Remark}
\theoremstyle{remark} \newtheorem{example}[theorem]{Example}
\renewcommand{\P}{\mathbf P}
\newcommand{\Prob}[1]{\P\left\{#1\right\}}
\newcommand{\R}{\mathbb R}
\newcommand{\GG}{\mathbb G}
\newcommand{\HH}{\mathbb H}
\newcommand{\II}{\boldsymbol{I}}
\newcommand{\NN}{\mathbb N}
\newcommand{\TT}{\mathbb T}
\newcommand{\VV}{\mathbb V}
\newcommand{\XX}{\mathbb X}
\renewcommand{\SS}{\mathbb S}
\newcommand{\Sphere}{S^{d-1}}
\newcommand{\Sphereplus}{S^{d-1}_+}
\newcommand{\fC}{\mathfrak{C}}
\newcommand{\fF}{\mathfrak{F}}
\newcommand{\fK}{\mathfrak{K}}
\newcommand{\fL}{\mathfrak{L}}
\newcommand{\fX}{\mathfrak{X}}
\newcommand{\fY}{\mathfrak{Y}}
\newcommand{\fZ}{\mathfrak{Z}}
\newcommand{\fG}{\mathfrak{G}}
\newcommand{\fLoc}{\mathfrak{K}}
\newcommand{\salg}{\mathcal{F}}
\newcommand{\sA}{\mathcal{A}}
\newcommand{\sB}{\mathcal{B}}
\newcommand{\sC}{\mathcal{C}}
\newcommand{\sE}{\mathcal{E}}
\newcommand{\sF}{\mathcal F}
\newcommand{\sH}{\mathcal{H}}
\newcommand{\sK}{\mathcal K}
\newcommand{\sP}{\mathcal{P}}
\renewcommand{\subset}{\subseteq}
\newcommand{\one}{\mathbf{1}}
\newcommand{\od}{\overset{{\rm d}}{=}}
\newcommand{\dodn}{\overset{{\rm d}}\longrightarrow}
\newcommand{\toHn}{\overset{\scriptscriptstyle{d_{\rm H}}}{\longrightarrow}}
\newcommand{\toFn}{\overset{\scriptscriptstyle{\rm Fell}}{\longrightarrow}}
\newcommand{\Matr}[1][\phantom{0}]{{\mathsf{M}}_d^{\mathrm{#1}}}
\newcommand{\RM}{\R^d\times\Matr}
\DeclareMathOperator{\conv}{\mathrm{conv}\,}
\DeclareMathOperator{\cl}{\mathrm{cl}\,}
\newcommand{\gl}{\mathbb{GL}_d\,}
\newcommand{\so}{\mathbb{SO}_d\,}
\newcommand{\ortho}{\mathbb{O}_d\,}
\DeclareMathOperator{\Int}{Int}
\DeclareMathOperator{\Nor}{Nor}
\DeclareMathOperator{\diag}{diag}
\DeclareMathOperator{\dom}{dom}
\renewcommand{\epsilon}{\varepsilon}
\newcommand{\eps}{\epsilon}
\renewcommand{\emptyset}{\varnothing}
\newlength{\querylen}
\begin{document}
\title{Generalised convexity with respect to families of affine maps} 

\author{Zakhar Kabluchko}
\address{Institute of Mathematical Stochastics, Department of Mathematics and Computer Science, University of M\"{u}nster, Orl\'{e}ans-Ring 10, D-48149 M\"{u}nster, Germany}
\email{zakhar.kabluchko@uni-muenster.de}

\author{Alexander Marynych}
\address{Faculty of Computer Science and Cybernetics, Taras Shevchenko National University of Kyiv, Kyiv, Ukraine}
\email{marynych@unicyb.kiev.ua}

\author{Ilya Molchanov}
\address{Institute of Mathematical Statistics and Actuarial Science,
  University of Bern, Alpeneggstr. 22, 3012 Bern,  Switzerland}
\email{ilya.molchanov@stat.unibe.ch}

\date{\today}

\begin{abstract}
  The standard closed convex hull of a set is defined as the
  intersection of all images, under the action of a group of rigid
  motions, of a half-space containing the given set. In this paper we
  propose a generalisation of this classical notion, that we call a
  $(K,\HH)$-hull, and which is obtained from the above construction by
  replacing a half-space with some other closed convex subset $K$ of
  the Euclidean space, and a group of rigid motions by a subset $\HH$
  of the group of invertible affine transformations. The main focus is
  on the analysis of $(K,\HH)$-convex hulls of random samples from
  $K$.
\end{abstract}

\keywords{Convex body, generalised convexity, generalised curvature measure, matrix Lie group, Poisson hyperplane tessellation, zero cell}

\subjclass[2010]{Primary: 60D05; secondary: 52A01, 52A22}

\maketitle

\section{Introduction}
\label{sec:introduction}

Let $\HH$ be a nonempty subset of the product $\R^d\times \gl$, where
$\gl$ is a group of all invertible linear transformations of
$\R^d$. We regard elements of $\HH$ as invertible affine
transformations of $\R^d$ by identifying $(x,g)\in\HH$ with a mapping
\begin{displaymath}
  \R^d \ni y\mapsto g(y+x)\in\R^d,
\end{displaymath}
which first translates the argument $y$ by the vector $x$ and
then applies the linear transformation $g\in\GG$ to the translated
vector.

Fix a closed convex set $K$ in $\R^d$ which is distinct from the whole
space. For a given set $A\subseteq \R^d$, consider the set
$$
\conv_{K,\HH}(A):=\bigcap_{(x,g)\in\HH:\;A\subseteq g(K+x)} g(K+x),
$$
where $g(B):=\{gz:z\in B\}$ and $B+x:=\{z+x:z\in B\}$, for $g\in\GG$,
$x\in\R^d$, and $B\subseteq\R^d$. If there is no $(x,g)\in\HH$ such
that $g(K+x)$ contains $A$, the intersection on the right-hand side is
taken over the empty family and we stipulate that
$\conv_{K,\HH}(A)=\R^d$. The set $\conv_{K,\HH}(A)$ is, by definition,
the intersection of all images of $K$ under affine transformations
from $\HH$, which contain $A$. We call the set $\conv_{K,\HH}(A)$,
which is easily seen to be closed and convex, the $(K,\HH)$-hull of
$A$; the set $A$ is said to be $(K,\HH)$-convex if it coincides with
its $(K,\HH)$-hull. This definition of the convex hull fits the
abstract convexity concept described in \cite{MR3767739}. It also
appeared in \cite{lac10}, where a generalised envelope of $A$ is
defined as the intersection of all sets containing $A$ from a certain
family.

In what follows we assume that $\HH$ contains the pair $(0,\II)$,
where $\II$ is the unit matrix. If $A\subset K$, then
\begin{equation}
  \label{eq:conv_KH_in_K}
  \conv_{K,\HH}(A)\subset K.
\end{equation}
It is obvious that a larger family $\HH$ results in a smaller
$(K,\HH)$-hull, that is, if $\HH\subset \HH_1$, then
$$
\conv_{K,\HH_1}(A)\subseteq \conv_{K,\HH}(A),\quad A\subset \R^d.
$$
In particular, if $\HH=\{0\}\times \{\II\}$ and $A\subset K$, then
$$
\conv_{K,\{0\}\times \{\II\}}(A)=K,
$$
so equality in \eqref{eq:conv_KH_in_K} is possible.  Since
$(K,\HH)$-hull is always a closed convex set which contains $A$,
$$
\conv(A)\subseteq \conv_{K,\HH}(A),\quad A\subset \R^d,
$$
where $\conv$ denotes the operation of taking the conventional closed
convex hull. If $K$ is a fixed closed half-space,
$\HH=\R^d\times \so$, where $\so$ is the special orthogonal group,
then $\conv_{K,\HH}(A)=\conv(A)$, since every closed half-space can be
obtained as an image of a fixed closed half-space under a rigid
motion.  The equality here can also be achieved for an arbitrary closed
convex set $K$ and all bounded $A\subset \R^d$ upon letting
$\HH=\R^d\times \gl$, see Proposition~\ref{prop:largest_group} below.

A nontrivial example, when $\conv_{K,\HH}(A)$ differs both from
$\conv(A)$ and from $K$, is as follows. If $K$ is a closed ball of a
fixed radius and $\HH=\R^d\times\{\II\}$, then $\conv_{K,\HH}(A)$ is
known in the literature as the ball hull of $A$, see \cite{Bezdek_et_al:2007,Fodor+Kevei+Vigh:2014,Paouris+Pivovarov:2017}, and, more generally,
if $K$ is an arbitrary convex body (that is, a compact convex set with
nonempty interior) and $\HH=\R^d\times \{\II\}$, then
$\conv_{K,\HH}(A)$ is called the $K$-hull of $A$, see
\cite{Fodor+Papvari+Vigh:2020,jah:mar:ric17,MarMol:2021}. It is also clear from the
definition that further nontrivial examples could be obtained from
this case by enlarging the family of linear transformations involved
in $\HH$.

In the above examples the set $\HH$ takes the form $\TT\times\GG$ for
some $\TT\subset \R^d$ and $\GG\subset \gl$. We implicitly assume
this, whenever we specify $\TT$ and $\GG$. Furthermore, many
interesting examples arise if $\TT$ is a linear subspace of $\R^d$ and
$\GG$ is a subgroup of $\gl$.

The paper is organised as follows. In Section~\ref{sec:KH_hulls} we
analyse some basic properties of $(K,\HH)$-hulls and show how various
known hulls in {\it convex geometry} can be obtained as particular
cases of our construction. However, the main focus in our paper will
be put on probabilistic aspects of $(K,\HH)$-hulls. As in many other
models of {\it stochastic geometry}, we shall study $(K,\HH)$-hulls of
random samples from $K$ as the size of the sample tends to infinity.
In Section~\ref{sec:random} we introduce a random closed set which can
be thought of as a variant of the Minkowski difference between the set
$K$ and the $(K,\HH)$-hull of a random sample from $K$. The limit
theorems for this object are formulated and proved in Section
\ref{sec:poiss-point-proc}. Various properties of the limiting random
closed set are studied in Section~\ref{sec:zero-cells-matrix}. A
number of examples for various choices of $K$ and $\HH$ are presented
in Section~\ref{sec:examples}. Some technical results related mostly
to the convergence in distribution of random closed sets are collected
in the Appendix.

\section{\texorpdfstring{$(K,\HH)$}{(K,H)}-hulls of subsets of \texorpdfstring{$\R^d$}{Rd}}\label{sec:KH_hulls}

It is easy to see that $\conv_{K,\HH}(K)=K$ and $\conv_{K,\HH}(A)$ is
equal to the intersection of all $(K,\HH)$-convex sets containing $A$.
We now show that the $(K,\HH)$-hull is an idempotent operation.

\begin{proposition}
  If $A\subset K$, then
  \begin{displaymath}
    \conv_{K,\HH}\big(\conv_{K,\HH}(A)\big)=\conv_{K,\HH}(A).
  \end{displaymath}
\end{proposition}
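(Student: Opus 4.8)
Write $B:=\conv_{K,\HH}(A)$, and introduce the two index families
\[
\fF_A:=\{(x,g)\in\HH:\ A\subseteq g(K+x)\},\qquad
\fF_B:=\{(x,g)\in\HH:\ B\subseteq g(K+x)\},
\]
so that $\conv_{K,\HH}(A)=\bigcap_{(x,g)\in\fF_A}g(K+x)$ and $\conv_{K,\HH}(B)=\bigcap_{(x,g)\in\fF_B}g(K+x)$. The whole argument is the standard verification that an intersection-of-a-family hull is idempotent: I will show that $\fF_A=\fF_B$, after which the two intersections coincide and the claim follows at once.

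First I would dispose of the degenerate convention. Since $A\subset K$ and $(0,\II)\in\HH$, we have $A\subseteq\II(K+0)=K$, so $(0,\II)\in\fF_A$; thus $\fF_A\neq\emptyset$ and the intersection defining $B$ is genuine, whence $B\subset K$ by \eqref{eq:conv_KH_in_K}. Consequently $(0,\II)\in\fF_B$ as well, and neither $\conv_{K,\HH}(A)$ nor $\conv_{K,\HH}(B)$ is $\R^d$ by convention.

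Next comes the equality of index families. The inclusion $\fF_B\subseteq\fF_A$ is immediate: $A\subseteq B$ (the hull contains its argument), so if $g(K+x)\supseteq B$ then $g(K+x)\supseteq A$. This already yields $B=\conv_{K,\HH}(A)\subseteq\conv_{K,\HH}(B)$ by the monotonicity of the hull in its set argument noted earlier in the text. For the reverse inclusion $\fF_A\subseteq\fF_B$, pick any $(x,g)\in\fF_A$; since $B=\bigcap_{(x',g')\in\fF_A}g'(K+x')$ is contained in every member of that intersection, in particular $B\subseteq g(K+x)$, i.e. $(x,g)\in\fF_B$. Hence $\fF_A=\fF_B$, and therefore
\[
\conv_{K,\HH}\big(\conv_{K,\HH}(A)\big)=\conv_{K,\HH}(B)=\bigcap_{(x,g)\in\fF_B}g(K+x)=\bigcap_{(x,g)\in\fF_A}g(K+x)=\conv_{K,\HH}(A).
\]

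There is, frankly, no serious obstacle here; the only point that needs a line of care is the empty-family convention $\conv_{K,\HH}(\cdot)=\R^d$, which is handled by the first step (and, were one to drop the hypothesis $A\subset K$, by the additional remark that $\conv_{K,\HH}(\R^d)=\R^d$ because no set of the form $g(K+x)$ equals $\R^d$, as $K\neq\R^d$ and $g$ is invertible).
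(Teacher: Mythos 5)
Your proof is correct and rests on the same core observation as the paper's: every $g(K+x)$ containing $A$ automatically contains $\conv_{K,\HH}(A)$, so the two defining intersections coincide. Your intermediate claim $\fF_A=\fF_B$ is precisely the paper's Lemma~\ref{lem:minkowski} ($K\ominus_{K,\HH}A=K\ominus_{K,\HH}(\conv_{K,\HH}(A))$), which the paper states separately immediately after the proposition; you have simply folded it into the argument, with appropriately careful handling of the empty-family convention.
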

\begin{proof}
  We only need to show that the left-hand side is a subset of the
  right-hand one. Assume that $z$ belongs to the left-hand side, and
  $z\notin g(K+x)$ for at least one $(x,g)\in\HH$ such that
  $A\subset g(K+x)$. The latter implies $\conv_{K,\HH}(A)\subset g(K+x)$, so that
  $\conv_{K,\HH}(\conv_{K,\HH}(A))$ is also a subset of $g(K+x)$,
  which is a contradiction. 
\end{proof}

For each $A\subset \R^d$, denote
\begin{displaymath}
  K\ominus_{K,\HH} A:=\big\{(x,g)\in\HH: A\subset g(K+x)\big\}. 
\end{displaymath}
If $\HH=\R^d\times\{\II\}$, the set
$\big\{x\in\R^d: (-x,\II)\in (K\ominus_{K,\HH} A)\big\}$ is the usual
Minkowski difference
\begin{displaymath}
  K\ominus A:=\{x\in\R^d: A+x\subseteq K\}
\end{displaymath}
of $K$ and $A$, see page~146 in~\cite{schn2}. By the definition of $(K,\HH)$-hulls 
\begin{displaymath}
  \conv_{K,\HH}(A)=\bigcap_{(x,g)\in K\ominus_{K,\HH} A} g(K+x),
\end{displaymath}
and, therefore, $A$ is $(K,\HH)$-convex if and only if
\begin{displaymath}
  A=\bigcap_{(x,g)\in K\ominus_{K,\HH} A} g(K+x). 
\end{displaymath}

The following is a counterpart of Proposition 2.2 in \cite{MarMol:2021}.

\begin{lemma}\label{lem:minkowski}
  For every $A\subset \R^d$, we have
  $$
  K\ominus_{K,\HH} A=K\ominus_{K,\HH} \big(\conv_{K,\HH}(A)\big).
  $$
\end{lemma}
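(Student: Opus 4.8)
The plan is to prove the two inclusions separately, with one being essentially trivial and the other following from the idempotence-type reasoning already used in the paper. Write $B:=\conv_{K,\HH}(A)$ for brevity.

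First I would establish the inclusion $K\ominus_{K,\HH}B\subseteq K\ominus_{K,\HH}A$. This is immediate: if $(x,g)\in K\ominus_{K,\HH}B$, then $B\subseteq g(K+x)$, and since $A\subseteq B$ always holds (the $(K,\HH)$-hull contains $A$ by definition), we get $A\subseteq g(K+x)$, i.e. $(x,g)\in K\ominus_{K,\HH}A$.

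For the reverse inclusion $K\ominus_{K,\HH}A\subseteq K\ominus_{K,\HH}B$, suppose $(x,g)\in K\ominus_{K,\HH}A$, so that $A\subseteq g(K+x)$. Then $g(K+x)$ is one of the sets appearing in the intersection that defines $\conv_{K,\HH}(A)$, hence $B=\conv_{K,\HH}(A)\subseteq g(K+x)$, which says exactly that $(x,g)\in K\ominus_{K,\HH}B$. Combining the two inclusions gives the claimed equality. Note the argument does not even use the hypothesis $A\subseteq K$ in an essential way beyond what is needed to make $\conv_{K,\HH}(A)$ behave as stated; it works for arbitrary $A\subseteq\R^d$, matching the phrasing of the lemma.

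I do not anticipate a genuine obstacle here: the statement is a direct unravelling of the definition of $\conv_{K,\HH}(A)$ as the intersection over $K\ominus_{K,\HH}A$, together with the monotonicity of $A\mapsto K\ominus_{K,\HH}A$ under inclusion. The only point requiring a moment's care is making sure the edge case is handled — if $K\ominus_{K,\HH}A=\emptyset$, then $\conv_{K,\HH}(A)=\R^d$ by convention, and since $\R^d$ is contained in no proper $g(K+x)$ (recall $K\neq\R^d$, so $g(K+x)\neq\R^d$ for every $(x,g)$), we also have $K\ominus_{K,\HH}B=\emptyset$, so both sides are empty and the identity holds trivially.
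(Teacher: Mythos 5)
Your proof is correct and follows essentially the same two-inclusion argument as the paper: one direction from $A\subseteq\conv_{K,\HH}(A)$, the other from the fact that $\conv_{K,\HH}(A)$ is contained in every $g(K+x)$ containing $A$. The extra remark about the empty case is a harmless (and correct) addition.
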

\begin{proof}
  Since $A\subset \conv_{K,\HH}(A)$, the right-hand side is a subset
  of the left-hand one. Let $(x,g)\in K\ominus_{K,\HH} A$. Then
  $A\subseteq g(K+x)$, and, therefore,
  $\conv_{K,\HH}(A)\subseteq g(K+x)$. The latter means that
  $(x,g)\in K\ominus_{K,\HH} (\conv_{K,\HH}(A))$.
\end{proof}

Now we shall investigate how $\conv_{K,\HH}(A)$ looks for various
choice of $K$ and $\HH$, in particular, how various known hulls
(conventional, spherical, conical, etc.) can be derived as particular
cases of our model. In order to proceed, we recall some basic notions
of convex geometry. Let $K$ be a closed convex set, and let
\begin{displaymath}
  h(K,u):=\sup\big\{\langle x,u\rangle : x\in K\big\},\quad u\in\R^d
\end{displaymath}
denote the support function of $K$ in the direction $u$, where
$\langle x,u\rangle$ is the scalar product.  Put
\begin{displaymath}
  \dom(K):=\big\{u\in\R^d : h(K,u)<\infty\big\}
\end{displaymath}
and note that $\dom(K)=\R^d$ for compact sets $K$. The cone $\dom(L)$
is sometimes called the barrier cone of $L$, see the end of Section~2 in
\cite{Rockafellar:1972}. For $u\in \dom(K)$, let $H(K,u)$,
$H^{-}(K,u)$ and $F(K,u)$ denote the support plane, supporting
halfspace and support set of $K$ with outer normal vector $u\neq0$,
respectively. Formally,
\begin{displaymath}
  H(K,u):=\big\{x\in\R^d:\langle x,u\rangle = h(K,u)\big\},\quad
  H^{-}(K,u):=\big\{x\in\R^d:\langle x,u\rangle \leq h(K,u)\big\}
\end{displaymath}
and $F(K,u)=H(K,u)\cap K$.  We shall also need notions of supporting
and normal cones of $K$ at a point $v\in K$. The supporting cone at
$v\in K$ is defined by 
\begin{equation}
  \label{eq:supporting-cone}
  S(K,v):=\cl\Big(\bigcup_{\lambda>0}\lambda(K-v)\Big),
\end{equation}
where $\cl$ is the topological closure, see page~81 in~\cite{schn2}. If
$v\in F(K,u)$ for some $u\in \dom(K)$, then
\begin{equation}
  \label{eq:cone_in_halfspace}
  S(K,v)+v\subseteq H^{-}(K,u).
\end{equation}

For $v$ which belong to the boundary $\partial K$ of $K$, the normal cone
$N(K,v)$ to $K$ at $v$ is defined by
$$
N(K,v):=\big\{u\in\R^d\setminus\{0\}:v\in H(K,u)\big\}\cup\{0\}.
$$

\begin{proposition}
  \label{prop:cone}
  Let $K$ be a closed convex set, and let $\HH=\TT\times\GG$, where
  $\TT=\R^d$, and $\GG=\{\lambda\II:\lambda>0\}$ is the group of all
  scaling transformations. If $A\subset K$, then
  \begin{equation}
    \label{eq:7}
    \conv_{K,\HH}(A)=\bigcap_{x\in\R^d,v\in\partial K, A\subset
      S(K,v)+v+x} \big(S(K,v)+v+x\big),
  \end{equation}
  that is, $\conv_{K,\HH}(A)$ is the intersection of all translations
  of supporting cones to $K$ that contain $A$.
\end{proposition}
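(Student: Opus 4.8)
The plan is to show the two sets in \eqref{eq:7} contain each other by unwinding the definition of $\conv_{K,\HH}(A)$ for the specific choice $\HH=\R^d\times\{\lambda\II:\lambda>0\}$, matching the family of scaled-and-translated copies $g(K+x)=\lambda K+\lambda x$ of $K$ against the family of translated supporting cones $S(K,v)+v+x$. The key geometric fact to exploit is that a supporting cone $S(K,v)$ arises as a limit of the dilations $\lambda(K-v)$ as $\lambda\to\infty$ (cf.\ \eqref{eq:supporting-cone}), so any translated supporting cone to $K$ can be approximated from the family $\{\lambda K + y : \lambda>0,\ y\in\R^d\}$, and conversely every set $\lambda K + y$ that contains $A\subset K$ can be shrunk to obtain a translated supporting cone still containing $A$.

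\textbf{Step 1 (the right-hand side is contained in the left-hand side).} Let $\lambda>0$ and $y\in\R^d$ be such that $A\subseteq \lambda K + y$, i.e.\ $(x,\lambda\II)\in K\ominus_{K,\HH}A$ with $x=\lambda^{-1}y$. I want to produce a translated supporting cone $S(K,v)+v+z$ with $A\subseteq S(K,v)+v+z \subseteq \lambda K + y$; then intersecting over all such cones gives $\conv_{K,\HH}(A)\supseteq$ (right-hand side of \eqref{eq:7}). The set $\lambda K + y$ is a closed convex set containing $A\subset K$; since $A\subset K$ and $K$ is a proper closed convex set, there is a boundary point of $\lambda K + y$ ``beyond'' which $A$ does not reach, more precisely one can translate $\lambda K + y$ until some face of it touches $K$; alternatively, pick any $v\in\partial K$ on the boundary of $\lambda K + y$ after a suitable translation and use \eqref{eq:cone_in_halfspace} together with convexity to get $S(\lambda K + y, \text{that point}) \supseteq$ the cone we need. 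The cleanest route: $S(\lambda K + y, w) = S(K, (w-y)/\lambda)$ for $w\in\partial(\lambda K+y)$ since supporting cones are invariant under the homothety, and $\lambda K + y \subseteq S(\lambda K + y, w) + (\text{apex})$ for any such $w$; choosing $w$ appropriately so the translated cone still contains $A$ finishes this inclusion.

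\textbf{Step 2 (the left-hand side is contained in the right-hand side).} Conversely, fix $v\in\partial K$, $x\in\R^d$ with $A\subseteq S(K,v)+v+x$. Using \eqref{eq:supporting-cone}, write $S(K,v)+v = \cl\big(\bigcup_{\lambda>0}\lambda(K-v)+v\big) = \cl\big(\bigcup_{\lambda>0}(\lambda K + (1-\lambda)v)\big)$, a nested (increasing in $\lambda$) union of translated dilates of $K$, each of the form $g(K+x')$ with $g=\lambda\II\in\GG$. Since $A$ is contained in the closure of this increasing union, I need $A$ to be contained in one member $\lambda K + (1-\lambda)v + x$ of the family for $\lambda$ large enough; this requires a compactness-type argument and is where I expect the main obstacle — if $A$ is unbounded or merely touches the boundary of the cone, no single dilate may suffice. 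The fix is either to assume/observe $A$ bounded (consistent with the paper's stated focus on samples from $K$), or to replace each cone in \eqref{eq:7} by the intersection of the dilates $\lambda K + (1-\lambda)v + x$ over $\lambda>0$, which equals the closed cone anyway, and note that $\conv_{K,\HH}(A)$, being an intersection over all of $\HH$, is automatically contained in each such dilate that contains $A$, hence in their intersection $S(K,v)+v+x$. Taking the intersection over all admissible $(v,x)$ then yields $\conv_{K,\HH}(A)\subseteq$ (right-hand side of \eqref{eq:7}), completing the proof.

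Combining Steps 1 and 2 gives equality in \eqref{eq:7}. The main technical care is the interchange of ``closure of an increasing union'' with ``membership in a single member of the union'' in Step 2; handling this correctly (via boundedness of $A$, or by rephrasing the right-hand side as an intersection of dilates rather than of their closures) is the crux of the argument.
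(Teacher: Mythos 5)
Your Step~1 cannot work as stated. You ask for a translated supporting cone $S(K,v)+v+z$ squeezed between $A$ and $\lambda K+y$, but $S(K,v)$ contains the ray $\{\mu(w-v):\mu>0\}$ for any $w\in K\setminus\{v\}$ and is therefore unbounded, so it can never sit inside $\lambda K+y$ when $K$ is bounded --- and the proposition is applied throughout the paper to compact $K$. Your fallback, $\lambda K+y\subseteq S(\lambda K+y,w)+\mathrm{apex}$, is a containment in the wrong direction: it only exhibits the cone as a member of the family intersected on the right-hand side of \eqref{eq:7}, hence shows that the right-hand side is contained in that \emph{cone}, not in $\lambda K+y$. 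The idea you are missing is the one the paper actually uses: given $z\notin\lambda_0K+y_0\supseteq A$, separate $z$ from $\lambda_0K+y_0$ by a hyperplane with outer normal $u_0$, pick $v_0\in F(K,u_0)$, and use \eqref{eq:cone_in_halfspace} to see that $S(K,v_0)$, suitably translated, contains $\lambda_0K+y_0$ (hence $A$) while staying inside the closed half-space that excludes $z$. The cone does not need to lie inside $\lambda_0K+y_0$; it only needs to miss the given point $z$, and that is achieved by separation, which is absent from your argument.

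Step~2 correctly flags the real obstruction, but neither proposed fix resolves it. Boundedness of $A$ does not help: take $K$ the unit disk in $\R^2$, $v=(0,1)$, $x=(0,-1/2)$, so that $S(K,v)+v+x=\{x_2\le 1/2\}$, and let $A=K\cap\{x_2\le 1/2\}$. Then $A$ is compact and contained in the cone, yet no dilate $\lambda K+(1-\lambda)v+x$ (a disk whose topmost point is $(0,1/2)$) contains $A$, and in fact no scaled translate of $K$ containing $A$ lies inside this cone at all. Your second fix rests on a false identity: the cone is the closure of the \emph{union} of the dilates $\lambda K+(1-\lambda)v+x$, not their intersection (for bounded $K$ the intersection of that increasing family is just the apex $\{v+x\}$); moreover, in the example above the subfamily of such dilates that contain $A$ is empty, so the intersection you invoke is $\R^d$ by convention rather than the cone, and the argument proves nothing. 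A correct proof of this inclusion must, for each $z$ outside the cone, strictly separate $z$ from the cone in a direction $u\in N(K,v)$ and then produce \emph{some} scaled translate of $K$ containing $A$ whose support function in direction $u$ still falls short of $\langle z,u\rangle$; such a translate exists but is in general not contained in the cone. As written, both inclusions remain unproved.
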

\begin{proof}
  If $A\subset \lambda K+x$, then $A\subset S(K,v)+v+x$ for any
  $v\in K$. Hence, we only need to show that the right-hand side of
  \eqref{eq:7} is contained in the left-hand one. Assume that
  $z\in S(K,v)+v+x$ for all $v\in\partial K$ and $x\in\R^d$ such that
  $A\subset S(K,v)+v+x$, but $z\notin \lambda_0 K+y_0$ for some
  $\lambda_0>0$ and $y_0\in\R^d$ with $A\subset \lambda_0 K+y_0$. By
  the separating hyperplane theorem, see Theorem~1.3.4 in
  \cite{schn2}, there exists a hyperplane $H_0\subset \R^d$ such that
  $\lambda_0 K+y_0\subset H^{-}_0$ and $z\in H_0^{+}$, where
  $H_0^{\pm}$ are the open half-spaces bounded by $H_0$. Let $u_0$ be
  the unit outer normal vector to $H_0^-$, and note that
  $u_0\in \dom(K)$. Choose an arbitrary $v_0$ from the support
  set $F(K,u_0)$. Since
  $\lambda_0 K=\lambda_0 (K-v_0)+\lambda_0 v_0\subset
  S(K,v_0)+\lambda_0 v_0$, we have
  $A\subset S(K,v_0)+\lambda_0 v_0+y_0$. However,
  \begin{align*}
    S(K,v_0)+\lambda_0 v_0+y_0
    &=S(\lambda_0 K,\lambda_0 v_0)+\lambda_0 v_0+y_0\\
    &\subseteq H^{-}(\lambda_0 K_0,u_0)+y_0
      =H^{-}(\lambda_0 K_0+y_0,u_0)\subseteq H_0^{-}\cup H_0,
  \end{align*} 
  where the penultimate inclusion follows from
  \eqref{eq:cone_in_halfspace}. Thus, $z\notin S(K,v_0)+v_0+x_0$ with
  $x_0=(\lambda_0-1)v_0+y_0$, and $S(K,v_0)+v_0+x_0$ contains $A$. The
  obtained contradiction completes the proof.
\end{proof}

\begin{corollary}
  \label{cor:smooth}
  If $K$ is a smooth convex body (meaning that the normal cone at each
  boundary point is one-dimensional), $\TT=\R^d$, and 
  $\GG=\{\lambda\II:\lambda>0\}$ is the group of all
  scaling transformations, then  $\conv_{K,\HH}(A)=\conv(A)$, for all $A\subset K$.
\end{corollary}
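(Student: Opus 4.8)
The plan is to invoke Proposition~\ref{prop:cone} and then to identify every translated supporting cone of a \emph{smooth} convex body as an ordinary closed half-space. Since $K$ is a convex body, the set $A\subset K$ is bounded, and $\conv(A)$ --- the closed convex hull --- is the intersection of all closed half-spaces containing $A$ (a standard fact, see~\cite{schn2}). By Proposition~\ref{prop:cone},
\[
  \conv_{K,\HH}(A)=\bigcap_{x\in\R^d,\,v\in\partial K,\,A\subset S(K,v)+v+x}\big(S(K,v)+v+x\big),
\]
so it suffices to show that $\{S(K,v)+v+x:v\in\partial K,\ x\in\R^d\}$ is exactly the family of all closed half-spaces of $\R^d$.

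First I would record the half-space description of $S(K,v)$. The cone $S(K,v)$ from~\eqref{eq:supporting-cone} is a closed convex cone whose polar cone is the normal cone $N(K,v)$; since $S(K,v)$ is closed and convex, the bipolar theorem gives conversely $S(K,v)=N(K,v)^\circ$. The smoothness assumption means that $N(K,v)$ is the one-dimensional ray $\{t\,u(v):t\ge0\}$ spanned by the unique outer unit normal $u(v)$ at $v\in\partial K$, and the polar of such a ray is the half-space $\{y\in\R^d:\langle y,u(v)\rangle\le0\}$. Hence $S(K,v)$ is a closed half-space with outer normal $u(v)$, and for every $x\in\R^d$ the set $S(K,v)+v+x$ is a closed half-space with outer normal $u(v)$ whose bounding hyperplane passes through $v+x$; letting $x$ run over $\R^d$ places this hyperplane in every position orthogonal to $u(v)$, so $\{S(K,v)+v+x:x\in\R^d\}$ is precisely the set of all closed half-spaces with outer normal $u(v)$. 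It remains to note that $u(v)$ runs over the whole unit sphere $\Sphere$ as $v$ ranges over $\partial K$: for any $u\in\Sphere$ the compactness of $K$ guarantees $F(K,u)\neq\emptyset$, and any $v\in F(K,u)$ has outer normal $u$. Consequently the family above is the collection of all closed half-spaces in $\R^d$, and the displayed intersection becomes the intersection of all closed half-spaces containing $A$, namely $\conv(A)$.

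The only step requiring care is the polarity $N(K,v)=S(K,v)^\circ$ and the resulting half-space form of $S(K,v)$ under smoothness; both are classical and follow directly from the definitions together with the bipolar theorem for closed convex cones, so I anticipate no real obstacle --- the remainder is bookkeeping with Proposition~\ref{prop:cone}.
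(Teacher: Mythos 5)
Your proposal is correct and follows essentially the same route as the paper: apply Proposition~\ref{prop:cone} and observe that for a smooth body every translated supporting cone is a closed half-space, with all half-spaces arising this way, so the intersection reduces to the usual closed convex hull. You simply spell out in more detail (via the polarity $S(K,v)=N(K,v)^\circ$ and surjectivity of the Gauss map) what the paper states in one line.
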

\begin{proof}
  Since $K$ is a smooth convex body, $\dom(K)=\R^d$ and its
  supporting cone at each boundary point is equal to the supporting
  half-space. The convex hull of $A$ is exactly the intersection of
  all such half-spaces.
\end{proof}

The next result formalises an intuitively obvious fact that the
$(K,\HH)$-hull of a bounded set $A$ coincides with $\conv(A)$ for
an arbitrary $K$ provided $\HH$ is sufficiently rich, in particular, if
$\HH=\R^d\times\gl$.

\begin{proposition}\label{prop:largest_group}
  Let $K$ be a closed convex set with nonempty interior, 
  $\TT=\R^d$, and let $\GG$ be the
  group of all scaling and orthogonal transformations of $\R^d$, that
  is,
  $$
  \GG=\big\{x\mapsto \lambda g(x) : \lambda>0,\; g\in\so\big\},
  $$
  where $\so$ is the special orthogonal group of $\R^d$. Then
  $\conv_{K,\HH}(A)=\conv(A)$ for all bounded $A\subset \R^d$.
\end{proposition}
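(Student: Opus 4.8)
The plan is to exploit that the inclusion $\conv(A)\subseteq\conv_{K,\HH}(A)$ always holds, as observed in the Introduction, so that only the reverse inclusion has to be proved. Since $A$ is bounded, $\conv(A)$ is a compact convex set, hence the intersection of all closed half-spaces containing it; thus it suffices to produce, for each $z\notin\conv(A)$, a pair $(x,g)\in\HH$ with $A\subseteq g(K+x)$ and $z\notin g(K+x)$. Given such a $z$, strict separation from the compact set $\conv(A)$ (Theorem~1.3.4 in \cite{schn2}) yields a unit vector $u_0$ and a scalar $t$ with $\sup_{a\in A}\langle a,u_0\rangle<t<\langle z,u_0\rangle$; writing $H:=\{x\in\R^d:\langle x,u_0\rangle\le t\}$, we have $A\subseteq H$ and $z\notin H$.

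The affine map we seek will be built from a \emph{smooth boundary point} of $K$, i.e.\ a point $v\in\partial K$ with one-dimensional normal cone $N(K,v)=\{\mu w:\mu\ge0\}$, $|w|=1$. Such a point exists: picking any $v_*\in\partial K$ and $R>0$, the set $K\cap\bar B(v_*,R)$ is a convex body, its smooth boundary points are dense in its boundary (see \cite{schn2}), and any smooth point of $K\cap\bar B(v_*,R)$ lying in the interior of $\bar B(v_*,R)$---a nonempty relatively open portion of $\partial K$, as it contains $v_*$---is also smooth for $K$, since $K$ and $K\cap\bar B(v_*,R)$ agree in a neighbourhood of it. For a smooth point $v$ one has $S(K,v)=\{y:\langle y,w\rangle\le0\}$, whence $K\subseteq H^-(K,w)$ with $s:=h(K,w)=\langle v,w\rangle$ and $v\in H(K,w)$.

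The key step is the claim that for every compact $P\subseteq\{y:\langle y,w\rangle<0\}$ there is $\lambda_0>0$ with $v+\lambda^{-1}P\subseteq K$ for all $\lambda\ge\lambda_0$. Indeed, $\bigcup_{\lambda>0}\lambda(\Int K-v)$ is a convex set with nonempty interior whose closure is $S(K,v)$, so its interior equals $\Int S(K,v)=\{y:\langle y,w\rangle<0\}$; a convex set contains its interior, hence $\{y:\langle y,w\rangle<0\}\subseteq\bigcup_{n\in\NN}n(\Int K-v)$, an increasing union of open sets. Compactness of $P$ gives $P\subseteq n_0(\Int K-v)$ for some $n_0$, i.e.\ $v+n_0^{-1}P\subseteq\Int K$; for $\lambda\ge n_0$ the inclusion $v+\lambda^{-1}P\subseteq K$ then follows from convexity of $K$ by writing $v+\lambda^{-1}p$ as a convex combination of $v+n_0^{-1}p\in K$ and $v\in K$.

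Finally, choose $\bar g\in\so$ with $\bar g(w)=u_0$ and, for $\lambda>0$ to be fixed, set $\phi(y):=\lambda\bar g(y)+(1-\lambda)\bar g(v)+(t-s)u_0$, an affine map with linear part $\lambda\bar g\in\GG$, hence of the form $y\mapsto g(y+x)$ for a suitable $(x,g)=(x,\lambda\bar g)\in\HH$. Using $K\subseteq H^-(K,w)=\{x:\langle x,w\rangle\le s\}$ and $v\in H(K,w)$, a short computation gives $\langle\phi(y),u_0\rangle=\lambda\langle y,w\rangle+(1-\lambda)s+(t-s)\le t$ for $y\in K$, so $\phi(K)\subseteq H$ and $z\notin\phi(K)$ regardless of $\lambda$. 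On the other hand $\phi^{-1}(A)=v+\lambda^{-1}Q$ with $Q:=\bar g^{-1}(A)-v-(t-s)w$, and for $q\in Q$ corresponding to $a\in A$ one finds $\langle q,w\rangle=\langle a,u_0\rangle-t\le\sup_{a\in A}\langle a,u_0\rangle-t<0$, so the compact set $\cl Q$ lies in $\{y:\langle y,w\rangle<0\}$. Applying the claim with $P=\cl Q$ gives $\phi^{-1}(A)\subseteq K$, that is $A\subseteq\phi(K)$, for all sufficiently large $\lambda$; fixing such a $\lambda$ completes the proof. I expect the main obstacle to be the claim of the third paragraph---quantifying how the rescaled copies $\lambda(K-v)+v$ exhaust the tangent half-space $S(K,v)+v$---together with the existence of a smooth boundary point, which is classical for convex bodies but requires the localisation argument above when $K$ is unbounded.
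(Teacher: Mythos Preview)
Your proof is correct and follows essentially the same strategy as the paper's: separate $z$ from $\conv(A)$ by a hyperplane, pick a smooth boundary point $v\in\partial K$, rotate so that the outer normal at $v$ aligns with the separating direction, and then scale up until the image of $K$ swallows $A$ while staying on the correct side of the hyperplane. The paper invokes Theorem~2.2.5 of \cite{schn2} for the existence of a smooth boundary point (extending to unbounded $K$ by intersecting with large balls, as you do), positions $K$ by an unspecified rigid motion so that the supporting cone at $v$ becomes $\cl(H^-)$, and then obtains the required $\lambda_0$ by a short contradiction argument using compactness of $A$. Your version is more explicit on every count: you write down the affine map $\phi$ in closed form, you give the localisation argument for smooth points in detail, and your ``key claim'' replaces the paper's contradiction by a direct covering argument via the increasing open sets $n(\Int K-v)$. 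The extra care pays off in that your proof makes transparent exactly why strict separation ($\sup_{a\in A}\langle a,u_0\rangle<t$) is needed---it forces $\cl Q$ into the \emph{open} half-space $\{\langle\cdot,w\rangle<0\}$, which is precisely the interior of $S(K,v)$---whereas the paper leaves this implicit.
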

\begin{proof}
  It is clear that $\conv(A)\subset \conv_{K,\HH}(A)$. In
  the following we prove the opposite inclusion. Since
  $A\subseteq g(K+x)$ if and only if $\conv(A)\subseteq g(K+x)$, we
  have $\conv_{K,\HH}(A) = \conv_{K,\HH}(\conv(A))$ and
  there is no loss of generality in assuming that $A$ is compact and
  convex. Take a point
  $z\in \R^d\setminus A$. We need to show that there exists a pair 
  $(x,g)\in \R^d\times \GG$ (depending on $z$) such that
  $z\notin g(K+x)$ and $A\subset g(K+x)$. By the separating hyperplane
  theorem, see Theorem~1.3.4 in \cite{schn2}, there exists a
  hyperplane $H\subset \R^d$ such that $A\subset H^{-}$ and
  $z\in H^{+}$. If $K$ is compact, Theorem~2.2.5 in~\cite{schn2}
  implies that the boundary of $K$ contains at least one point at
  which the supporting cone is a closed half-space. This holds
  also for each closed convex $K$, which is not necessarily bounded, by
  taking intersections of $K$ with a growing family of closed
  Euclidean balls. 
  Let $v\in \partial K$ be such a point. After
  applying appropriate translation $x_0$ and orthogonal transformation
  $g_0\in\so$, we may assume that the supporting cone
  $S(g_0 (K+x_0),g_0 (v+x_0))$ is the closure of $H^{-}$. Thus,
  $$
  A\subseteq \bigcup_{\lambda>0}\lambda \big(g_0 (K-v)\big)\quad
  \text{and}\quad z\notin \bigcup_{\lambda>0}\lambda \big(g_0 (K-v)\big).
  $$
  It remains to show that there exists a $\lambda_0>0$ such that
  $A\subseteq \lambda_0 (g_0 (K-v))$. Assume that for every $n\in\NN$
  there exists an $a_{n}\in A$ such that $a_n\notin n(g_0 (K-v))$. Since
  $A$ is compact, there is a subsequence $(a_{n_j})$ converging to
  $a\in A\subset H^{-}$ as $j\to\infty$. Thus, there exists a
  $\lambda_0>0$ such that $a$ lies in the interior of
  $\lambda_0(g_0 (K-v))$. Hence, $a_{n_j}\in \lambda_0(g_0 (K-v))$ for
  all sufficiently large $j$, which is a contradiction.
\end{proof}  

\begin{remark}
  For unbounded sets $A$ the claim of
  Proposition~\ref{prop:largest_group} is false in general. As an
  example, one can take $d=2$, $A$ is a closed half-space and $K$ is
  an acute closed wedge. Thus, $\conv_{K,\HH}(A)=\R^2$, whereas
  $\conv(A)=A$.
\end{remark}

\begin{proposition}\label{eq:ellipses}
  Let $\GG=\gl$ be the general linear group,
  $\TT=\{0\}$, and let $K=B_1$ be the unit ball in $\R^d$.  Then, for
  an arbitrary compact set $A\subset \R^d$, it holds
  $\conv_{K,\HH}(A) = \conv(A\cup \check{A})$, where
  $\check{A}=\{-z:z\in A\}$. 
\end{proposition}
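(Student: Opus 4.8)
The plan rests on first identifying the geometric objects involved. Since $\TT=\{0\}$, the sets $g(K+x)$ with $(x,g)\in\HH$ are precisely the sets $g(B_1)$ with $g\in\gl$; writing $g$ as orthogonal $\cdot$ diagonal $\cdot$ orthogonal, these are exactly all origin-symmetric ellipsoids with non-empty interior. Put $C:=\conv(A\cup\check A)$: as the convex hull of a compact set it is compact, it is visibly convex and centrally symmetric, and $0\in C$ provided $A\neq\emptyset$, which I assume throughout. The goal is then to show $\conv_{K,\HH}(A)=C$, and I would establish the two inclusions separately.

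The inclusion $C\subseteq\conv_{K,\HH}(A)$ is immediate: any origin-symmetric ellipsoid $E$ with $A\subseteq E$ satisfies $E=-E$, so by convexity $E\supseteq\conv(A\cup(-A))=C$; intersecting over all such $E$ gives the claim. Moreover $A$ is bounded, so some ball $B_R$ is among these ellipsoids and the intersection defining $\conv_{K,\HH}(A)$ runs over a non-empty family; in particular the convention $\conv_{K,\HH}(A)=\R^d$ never applies here.

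For the opposite inclusion, note first that an origin-symmetric ellipsoid contains $A$ if and only if it contains $C$ (again by convexity and symmetry), so it suffices to prove that for each $z\notin C$ there is an origin-symmetric ellipsoid $E$ with $C\subseteq E$ and $z\notin E$. A direct construction --- separate $z$ from $C$ by a hyperplane and try to fit an ellipsoid between $C$ and the resulting symmetric slab --- does not work in general, because $C$ may touch that slab far away from the slab's axis; I expect this to be the only genuine difficulty, and I would sidestep it by passing to polar bodies.

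Since $C$ is compact and symmetric, $0\in\Int C^\circ$ and $C^{\circ\circ}=C$. From $z\notin C=C^{\circ\circ}$ we obtain $y_0\in C^\circ$ with $\langle z,y_0\rangle>1$, and replacing $y_0$ by $(1-t)y_0$ for small $t>0$ we may assume $y_0\in\Int C^\circ$. The segment $[-y_0,y_0]$ then lies in the open set $\Int C^\circ$, hence so does its $\eps$-neighbourhood for some $\eps>0$; choosing coordinates with $y_0$ on the first axis, the origin-symmetric ellipsoid
$$
F:=\Big\{x\in\R^d:\ \frac{x_1^2}{|y_0|^2}+\frac{x_2^2+\dots+x_d^2}{\eps^2}\le1\Big\}
$$
satisfies $[-y_0,y_0]\subseteq F\subseteq C^\circ$. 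Taking polars, $E:=F^\circ$ is again an origin-symmetric ellipsoid, and $F\subseteq C^\circ$ gives $C=C^{\circ\circ}\subseteq F^\circ=E$; on the other hand $y_0\in F$ together with $\langle z,y_0\rangle>1$ shows that $z$ fails the inequality defining $F^\circ$, i.e.\ $z\notin E$. Letting $z$ range over $\R^d\setminus C$ yields $\conv_{K,\HH}(A)\subseteq C$, which finishes the argument.
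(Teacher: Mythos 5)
Your argument is correct, and the two inclusions are set up exactly as in the paper (identification of the images $g(B_1)$ with origin-symmetric ellipsoids, the easy inclusion via symmetry and convexity, and the reduction to a symmetric compact convex $C=\conv(A\cup\check A)$). Where you diverge is the separation step. The paper does precisely the ``direct construction'' you dismiss: it separates $z$ from $C$ by a hyperplane, rotates so that $C\subset\{|x_1|<a\}$ and $z\in\{x_1>a\}$, uses compactness to squeeze $C$ into a bounded cylinder $\{|x_1|\le a-\eps,\;x_2^2+\cdots+x_d^2\le R^2\}$, and then encloses that cylinder in a very flat ellipsoid still contained in the open slab. Your stated obstruction --- that $C$ might ``touch the slab far from its axis'' --- does not actually occur: since $C$ is compact and contained in the \emph{open} slab, $\sup_{x\in C}|x_1|\le a-\eps<a$ for some $\eps>0$, and the transverse extent of $C$ is irrelevant because the ellipsoid's transverse semi-axes can be taken arbitrarily large while its first semi-axis stays below $a$. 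Your detour through polarity is nevertheless a complete and clean substitute: from $z\notin C=C^{\circ\circ}$ you get $y_0\in\Int C^{\circ}$ with $\langle z,y_0\rangle>1$, inscribe a thin nondegenerate ellipsoid $F$ around the segment $[-y_0,y_0]$ inside $\Int C^{\circ}$ (compactness of the segment supplies the $\eps$-margin), and dualize; since the polar of a nondegenerate origin-symmetric ellipsoid is again one, $E=F^{\circ}$ contains $C$ and excludes $z$. This buys a certain conceptual symmetry (separation of a point from a symmetric convex body is literally dual to inscribing a segment in the polar body), at the cost of invoking the bipolar theorem where the paper only needs hyperplane separation; both proofs are of comparable length. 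One pedantic remark: for $A=\emptyset$ the claimed identity fails ($\conv_{K,\HH}(\emptyset)=\{0\}$ while $\conv(\emptyset)=\emptyset$), so your standing assumption $A\neq\emptyset$ is the right reading of the statement.
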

\begin{proof}
  The images of the unit ball under the elements of $\gl$ are all
  ellipsoids centered at $0$. Since each of these ellipsoids
  is origin symmetric and convex, it is clear that
  $\conv_{K,\HH}(A) \supseteq \conv(A\cup \check{A})$. Let us prove the
  converse inclusion. Since replacing $A$ by the compact convex set
  $\conv(A\cup \check{A})$ does not change its $(K,\HH)$-hull, it suffices
  to assume that $A$ is an origin symmetric compact convex set. Let us
  take some $z\notin A$. We need to construct an ellipsoid $E$
  centered at the origin and such that $A\subset E$, whereas
  $z\notin E$. By the separating hyperplane theorem, see Theorem~1.3.4
  in \cite{schn2}, there exists an affine hyperplane $H\subset \R^d$
  such that $A\subset H^{-}$ and $z\in H^{+}$, where $H^{\pm}$ are
  open half-spaces bounded by $H$. Let $x=(x_1,\ldots, x_d)$ be the
  coordinate representation of a generic point in $\R^d$. After
  applying an orthogonal transformation, we may assume that the
  hyperplane $H$ is $\{x_1 = a\}$ for some $a>0$. Then,
  $A\subset \{|x_1|<a\}$, while $z\in \{x_1>a\}$. Hence,
  $A \subset \{x\in \R^d: |x_1|\leq a-\eps, x_2^2+\cdots + x_d^2 \leq
  R^2\}=:D$ for sufficiently small $\eps>0$ and sufficiently large
  $R>0$.  Clearly, there is an ellipsoid $E$ centered at $0$,
  containing $D$ and contained in the strip $\{|x_1|<a\}$. By
  construction, we have $A\subset E$ and $z\notin E$, and the proof is
  complete.
\end{proof}

Our next example deals with conical hulls.

\begin{proposition}\label{prop:conic_hull}
  Let $\TT=\{0\}$, $\GG=\so$ be the special orthogonal group, and let
  $K$ be the closed half-space in $\R^d$ such that $0\in\partial
  K$. If $A\subset K$, then
  $$
  \conv_{K,\HH}(A)=\cl({\rm pos}(A)),
  $$
  where
  $$
  {\rm pos}(A):=\Big\{\sum_{i=1}^{m}\alpha_i x_i
    :\;\alpha_i\geq 0,\;x_i\in A,m\in\NN\Big\}
  $$
  is the positive (or conical) hull of $A$.
\end{proposition}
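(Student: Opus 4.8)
The plan is first to make explicit the family of sets whose intersection defines $\conv_{K,\HH}(A)$, and then to recognise that intersection as the closed conical hull of $A$ by a separation argument. Throughout I take $d\ge2$; for $d=1$ the group $\so$ is trivial and the statement need not hold, so that case is excluded.

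Since $K$ is a closed half-space with $0\in\partial K$, we may write $K=\{y\in\R^d:\langle y,u_0\rangle\le0\}$ for a suitable unit vector $u_0$ (the bounding constant is $0$ precisely because $0\in\partial K$). Then for $g\in\so$,
$$
g(K)=g(K+0)=\{y\in\R^d:\langle g^{-1}y,u_0\rangle\le0\}=\{y\in\R^d:\langle y,gu_0\rangle\le0\}.
$$
As $g$ runs over $\so$, the vector $gu_0$ runs over the whole unit sphere, because $\so$ acts transitively on it. Writing $H_u:=\{y\in\R^d:\langle y,u\rangle\le0\}$ for the closed half-space with outer normal $u\ne0$ and the origin on its boundary, we obtain
$$
\conv_{K,\HH}(A)=\bigcap_{u\ne0,\;A\subset H_u}H_u,
$$
i.e. $\conv_{K,\HH}(A)$ is the intersection of all closed half-spaces through the origin that contain $A$, a non-empty family since $A\subset K=H_{u_0}$.

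It then remains to prove that the right-hand side equals $\cl({\rm pos}(A))$. For the inclusion ``$\supseteq$'', note that ${\rm pos}(A)$ is a convex cone containing $A$, so if $A\subset H_u$ then $\langle\sum_i\alpha_ix_i,u\rangle=\sum_i\alpha_i\langle x_i,u\rangle\le0$ for all $\alpha_i\ge0$ and $x_i\in A$; hence $H_u\supseteq{\rm pos}(A)$ and, being closed, $H_u\supseteq\cl({\rm pos}(A))$, and intersecting over all admissible $u$ gives $\cl({\rm pos}(A))\subseteq\conv_{K,\HH}(A)$. For the inclusion ``$\subseteq$'', take $z\notin\cl({\rm pos}(A))$. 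Since $\cl({\rm pos}(A))$ is closed and convex, the separating hyperplane theorem (Theorem~1.3.4 in \cite{schn2}) gives $u\ne0$ and $c\in\R$ with $\langle z,u\rangle>c$ and $\langle y,u\rangle\le c$ for every $y\in\cl({\rm pos}(A))$. Because $0\in\cl({\rm pos}(A))$ we have $c\ge0$; because $\cl({\rm pos}(A))$ is a cone, replacing $y$ by $\lambda y$ and letting $\lambda\to\infty$ forces $\langle y,u\rangle\le0$ for all $y\in\cl({\rm pos}(A))$. Thus $A\subset\cl({\rm pos}(A))\subset H_u$ while $z\notin H_u$ since $\langle z,u\rangle>c\ge0$, so $z$ does not lie in the intersection above, which proves $\conv_{K,\HH}(A)\subseteq\cl({\rm pos}(A))$.

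I do not expect a genuine obstacle here. The two points needing a little care are: the reduction in the second paragraph, which relies on transitivity of $\so$ on the unit sphere (hence the restriction $d\ge2$); and, in the last step, the observation that the separating functional can be chosen to vanish at the origin, which is exactly the classical fact that the intersection of the homogeneous half-spaces containing a set equals its closed conical hull, and which is where the hypothesis $0\in\partial K$ is used.
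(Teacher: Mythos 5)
Your proof is correct and follows essentially the same route as the paper: both identify $\conv_{K,\HH}(A)$ as the intersection of all homogeneous closed half-spaces containing $A$ (using that $\so$ acts transitively on the sphere) and then recognise that intersection as $\cl(\mathrm{pos}(A))$. The only difference is that where the paper cites Corollary~1.3.5 of Schneider (a closed convex cone is the intersection of its supporting half-spaces, all of which pass through the apex), you reprove that fact directly via the separating hyperplane theorem, which is a fine and self-contained substitute.
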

\begin{proof}
  By definition, $\conv_{K,\HH}(A)$ is the intersection of all closed
  half-spaces which contain the origin on the boundary, because every
  such half-space is an image of $K$ under some orthogonal
  transformation. Since $\cl({\rm pos}(A))$ is the intersection
  of {\it all} closed convex cones which contain $A$,
  $\cl({\rm pos}(A))\subset \conv_{K,\HH}(A)$. On the other hand,
  every closed convex cone is the intersection of its supporting
  half-spaces, see Corollary~1.3.5 in \cite{schn2}. Since all these
  supporting half-spaces contain the origin on the boundary, 
  $\cl({\rm pos}(A))$ is the intersection of {\it some}
  family of half-spaces containing the origin on the boundary, which
  means that $\conv_{K,\HH}(A)\subset \cl({\rm pos}(A))$.
\end{proof}

The next corollary establishes connections with a probabilistic model
studied recently in \cite{kab:mar:tem:19}. We shall return to this
model in Section~\ref{sec:examples}. Let 
\begin{equation}
  \label{eq:upper-ball}
  B_1^{+}:=\big\{(x_1,x_2,\ldots,x_d):
  x_1^2+\cdots+x_d^2\leq 1,x_1\geq 0\big\}
\end{equation}
be the unit upper half-ball in $\R^{d}$, and let
\begin{displaymath}
  \Sphereplus :=\big\{(x_1,x_2,\ldots,x_d):x_1^2+\cdots+x_d^2 =
  1,x_1\geq 0\big\}
\end{displaymath}
be the unit upper $(d-1)$-dimensional half-sphere.  Further, let
$\pi:\R^d\setminus\{0\}\to \Sphere$ be the mapping
$\pi(x)=x/\|x\|$, where $\Sphere$ is the unit sphere.

\begin{corollary}
  Let $K=B_1^{+}$, $\GG=\so$ and $\TT=\{0\}$. Then, for an arbitrary
  $A\subset K$, it holds
  \begin{equation}\label{eq:cones1}
    \conv_{K,\HH}(A)=\cl({\rm pos}(A))\cap B_1.
  \end{equation}
 Furthermore, if $A\neq\{0\}$, then
  \begin{equation}\label{eq:cones2}
    \conv_{K,\HH}(A)\cap\Sphere=\cl\big({\rm pos}(A)\big)\cap \Sphere
    =\cl\big({\rm pos}(\pi(A\setminus\{0\}))\big)\cap \Sphere,
  \end{equation}
  which is the closed spherical hull of the set
  $\pi(A\setminus\{0\})\subset \Sphere$.
\end{corollary}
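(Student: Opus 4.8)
The plan is to reduce both identities to Proposition~\ref{prop:conic_hull} by describing the rotations of the half-ball explicitly. Write $K=B_1^{+}=B_1\cap H_0^{-}$, where $H_0^{-}:=\{x\in\R^d:x_1\ge 0\}$ is a closed half-space with the origin on its bounding hyperplane. Since each $g\in\so$ is a bijection of $\R^d$ that fixes the unit ball, $g(K)=g(B_1)\cap g(H_0^{-})=B_1\cap g(H_0^{-})$, and as $g$ ranges over $\so$ the half-space $g(H_0^{-})$ ranges over the family $\mathcal H_0$ of \emph{all} closed half-spaces whose bounding hyperplane passes through $0$ (this uses that $\so$ acts transitively on the unit sphere). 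As $A\subset K\subset B_1$, for $H\in\mathcal H_0$ the inclusion $A\subset B_1\cap H$ is equivalent to $A\subset H$, so
\[
  \conv_{K,\HH}(A)=\bigcap_{H\in\mathcal H_0:\,A\subset H}(B_1\cap H)=B_1\cap\bigcap_{H\in\mathcal H_0:\,A\subset H}H .
\]
The family is nonempty because $A\subset H_0^{-}$, and Proposition~\ref{prop:conic_hull}, applied with the half-space $H_0^{-}$ in the role of $K$ (its hypotheses $0\in\partial H_0^{-}$ and $A\subset H_0^{-}$ hold), identifies the last intersection with $\cl({\rm pos}(A))$. This yields \eqref{eq:cones1}.

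For \eqref{eq:cones2} I would simply intersect \eqref{eq:cones1} with $\Sphere$; as $\Sphere\subset B_1$ this gives $\conv_{K,\HH}(A)\cap\Sphere=\cl({\rm pos}(A))\cap\Sphere$, the first claimed equality. For the second equality it suffices to show ${\rm pos}(A)={\rm pos}(\pi(A\setminus\{0\}))$ whenever $A\ne\{0\}$: picking any $y\in A\setminus\{0\}$ we have $0=0\cdot\pi(y)\in{\rm pos}(\pi(A\setminus\{0\}))$, while for $x\in A$ with $x\ne 0$ the generator $x$ equals $\|x\|\,\pi(x)$ and, conversely, $\pi(x)=\|x\|^{-1}x$, so each positive combination of elements of $A$ is a positive combination of elements of $\pi(A\setminus\{0\})$ and vice versa. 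Hence the two positive hulls coincide, so do their closures, and intersecting with $\Sphere$ gives the result; that $\cl({\rm pos}(\pi(A\setminus\{0\})))\cap\Sphere$ is the closed spherical hull of $\pi(A\setminus\{0\})$ is just the definition of the latter.

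I do not expect a genuine obstacle: the argument is essentially bookkeeping, and the only points needing attention are matching the rotated half-balls with the sets in Proposition~\ref{prop:conic_hull} and checking that the defining intersection is not over the empty family, together with the harmless degenerate cases $A=\{0\}$ (which is excluded in \eqref{eq:cones2}) and $d=1$ (where $\so$ is trivial and the transitivity step fails, so $d\ge 2$ should be assumed).
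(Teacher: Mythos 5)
Your argument is correct and is essentially the paper's own proof: both factor $g(B_1^{+})=g(H_0^{+})\cap B_1$ for $g\in\so$, pull $B_1$ out of the intersection using $A\subset B_1$, and invoke Proposition~\ref{prop:conic_hull} for the remaining intersection of half-spaces through the origin, with \eqref{eq:cones2} following from ${\rm pos}(A)={\rm pos}(\pi(A\setminus\{0\}))$. The extra bookkeeping you add (nonemptiness of the defining family, the $d\ge 2$ caveat for transitivity) is harmless and does not change the route.
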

\begin{proof}
  Note that $g(B_1^{+})=g(H_0^{+})\cap B_1$ for every $g\in\so$,
  where 
  \begin{equation}
    \label{eq:upper-half}
    H_0^+:=\big\{(x_1,x_2,\ldots,x_d)\in\R^d : x_1\geq 0\big\}.
  \end{equation}  
  Thus,
  \begin{align*}
    \conv_{K,\HH}(A)&=\bigcap_{g\in\so,A\subset g(B_1^{+})}g(B_1^{+})\\
    &=\bigcap_{g\in\so,A\subset g(H_0^{+})}
    \left(g(H_0^{+})\cap B_1\right)
    =\bigg(\bigcap_{g\in\so,A\subset g(H_0^{+})}g(H_0^{+})\bigg)\cap B_1,
  \end{align*}
  where we have used that $A\subset B_1$. By
  Proposition~\ref{prop:conic_hull}, the right-hand side is
  $\cl({\rm pos}(A))\cap B_1$. The first equation in \eqref{eq:cones2}
  is a direct consequence of \eqref{eq:cones1}, while the second one
  follows from
  ${\rm pos}(A)={\rm pos}(A\setminus\{0\})={\rm
    pos}(\pi(A\setminus\{0\}))$.
\end{proof}

\section{\texorpdfstring{$(K,\HH)$}{(K,H)}-hulls of random samples from \texorpdfstring{$K$}{K}}\label{sec:random}

From now on we additionally assume that $K\in\sK^d_{(0)}$, that
is, $K$ is a compact convex set in $\R^d$ which contains the origin
in its interior. Fix a complete probability space $(\Omega,\salg,\P)$. For
$n\in\NN$, let $\Xi_n:=\{\xi_1,\xi_2,\ldots,\xi_n\}$ be a sample of
$n$ independent copies of a random variable $\xi$ uniformly
distributed on $K$. Put
\begin{displaymath}
  Q_n:=\conv_{K,\HH}(\Xi_n)
\end{displaymath}
and
\begin{equation}
  \label{eq:4}
  \XX_{K,\HH}(\Xi_n):=\big\{(x,g)\in\HH:\Xi_n\subseteq g(K+x)\big\}
  =K\ominus_{K,\HH} \Xi_n=K\ominus_{K,\HH} Q_n,
\end{equation}
where the last equality follows from Lemma~\ref{lem:minkowski}.

We start with a simple lemma which shows that, for every $n\in\NN$,
$\XX_{K,\HH}(\Xi_n)$ is a random closed subset of $\HH$ equipped with
the relative topology induced by $\RM$, see the Appendix for the
definition of a random closed set. Here and in what follows $\Matr$
denotes the set of $d\times d$ matrices with real entries.  Note that
$Q_n$ is closed, being the intersection of closed sets.

\begin{lemma}
  \label{lemma:XXn}
  For all $n\in\NN$, $\XX_{K,\HH}(\Xi_n)$ is a random closed set
  in $\HH$.
\end{lemma}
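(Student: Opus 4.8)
The plan is to show that $\XX_{K,\HH}(\Xi_n)$ is a random closed set in the sense recalled in the Appendix, namely that it is almost surely closed (in the relative topology on $\HH\subseteq\RM$) and that $\{\omega : \XX_{K,\HH}(\Xi_n)(\omega)\cap U\neq\emptyset\}$ is measurable for every open $U$. The natural first step is to establish closedness. Fix $\omega$ and note that, by \eqref{eq:4},
\begin{displaymath}
  \XX_{K,\HH}(\Xi_n)=\big\{(x,g)\in\HH:\xi_i\in g(K+x)\text{ for }i=1,\dots,n\big\}
  =\bigcap_{i=1}^n\big\{(x,g)\in\HH:\xi_i\in g(K+x)\big\}.
\end{displaymath}
Since a finite intersection of closed sets is closed, it suffices to show that for each fixed point $z\in\R^d$ the set $C_z:=\{(x,g)\in\RM : g \text{ invertible},\ z\in g(K+x)\}$ is closed in $\RM$ (then intersect with $\HH$ to get the relative statement). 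The condition $z\in g(K+x)$ is equivalent to $g^{-1}z-x\in K$, and the map $(x,g)\mapsto g^{-1}z-x$ is continuous on the open set of invertible matrices; since $K$ is closed, $C_z$ is relatively closed there. One has to be mildly careful that $\gl$ itself is only an open subset of $\Matr$, so $C_z$ need not be closed in all of $\RM$; but $\HH\subseteq\R^d\times\gl$, so intersecting with $\HH$ causes no trouble, and $\XX_{K,\HH}(\Xi_n)$ is relatively closed in $\HH$.

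The second step is measurability. Here I would invoke the standard criterion for random closed sets in a Polish space (the Fundamental Measurability Theorem, as recalled in the Appendix): a map $\omega\mapsto F(\omega)$ into the closed subsets of a Polish space is a random closed set if and only if $\{\omega:F(\omega)\cap B\neq\emptyset\}$ is measurable for every closed ball $B$, or equivalently if the distance function $\omega\mapsto \operatorname{dist}(a,F(\omega))$ is measurable for each fixed $a$. I would use the representation above together with joint measurability: define $\Phi:\Omega\times\HH\to\R$ by $\Phi(\omega,(x,g))=\max_{1\le i\le n}\operatorname{dist}\!\big(g^{-1}\xi_i(\omega)-x,\,K\big)$, which is jointly measurable because $(x,g)\mapsto g^{-1}v-x$ is continuous for each $v$, $(\omega,v)\mapsto\xi_i(\omega)$ composed appropriately is measurable, and $v\mapsto\operatorname{dist}(v,K)$ is continuous. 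Then $\XX_{K,\HH}(\Xi_n)(\omega)=\{(x,g)\in\HH:\Phi(\omega,(x,g))\le 0\}$, so for any open $U\subseteq\HH$,
\begin{displaymath}
  \big\{\omega:\XX_{K,\HH}(\Xi_n)(\omega)\cap U\neq\emptyset\big\}
  =\big\{\omega:\inf_{(x,g)\in U\cap D}\Phi(\omega,(x,g))\le 0\big\},
\end{displaymath}
where $D$ is a fixed countable dense subset of $U$ (lower semicontinuity of the relevant function in the second argument, combined with continuity, lets one reduce the infimum over $U$ to an infimum over a countable set); this exhibits the event as a countable operation on measurable sets, hence measurable.

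Putting these together gives that $\XX_{K,\HH}(\Xi_n)$ is a random closed set in $\HH$. The main obstacle, modest as it is, lies in the bookkeeping around the non-compactness and mere local-compactness of $\gl$ and around the relative topology: one must make sure that "closed in $\HH$" rather than "closed in $\RM$" is what is claimed and used, and that the measurability criterion quoted from the Appendix is applied to $\HH$ with its Polish relative topology rather than to $\RM$. If the Appendix's framework is stated for random closed sets in a general locally compact second countable Hausdorff space (with the Fell topology), then the argument via the distance function and a countable dense set goes through verbatim; otherwise one first embeds $\HH$ into $\RM$ as a Borel (indeed $G_\delta$, since $\gl$ is open) subset and uses that a random closed subset of a Borel subset of a Polish space is handled by restriction. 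I do not expect any genuine difficulty beyond this, since everything reduces to the continuity of $(x,g)\mapsto g^{-1}z-x$ and the finiteness of the sample $\Xi_n$.
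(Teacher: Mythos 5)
Your closedness argument is fine. The measurability step, however, contains a genuine error: the displayed identity
\[
\big\{\omega:\XX_{K,\HH}(\Xi_n)(\omega)\cap U\neq\emptyset\big\}
=\Big\{\omega:\inf_{(x,g)\in U\cap D}\Phi(\omega,(x,g))\le 0\Big\}
\]
is false for open $U$. Since $\Phi\ge 0$ and $\Phi(\omega,\cdot)$ is continuous, the right-hand event is $\{\inf_U\Phi=0\}$, and the infimum of a nonnegative continuous function over an \emph{open} set can vanish without the zero set $\{\Phi(\omega,\cdot)=0\}=\XX_{K,\HH}(\Xi_n)(\omega)$ meeting $U$: this happens precisely when the closed set $\XX_{K,\HH}(\Xi_n)(\omega)$ touches $\partial U$ but misses $U$ (take $q_k\in U$ converging to a touching point $p_0$; then $\Phi(\omega,q_k)\to\Phi(\omega,p_0)=0$). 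So the inclusion ``$\supseteq$'' fails, and the measurability of the left-hand event does not follow as written. The repair is easy and consistent with the criterion you actually quoted: test against compact sets $B$ (say, closed balls whose closure stays inside $\R^d\times\gl$) rather than open $U$. There the infimum is attained, so
\[
\big\{\XX_{K,\HH}(\Xi_n)\cap B\neq\emptyset\big\}
=\Big\{\min_{B}\Phi=0\Big\}
=\bigcap_{k\ge 1}\bigcup_{p\in B\cap D}\big\{\Phi(\cdot,p)<1/k\big\},
\]
which is measurable, and this is exactly the form of measurability (on compact test sets) that the paper's framework requires.

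For comparison, the paper avoids the Carath\'eodory-function and dense-set machinery entirely. For a single sample point it observes the set identity $\{\XX_{\xi}\cap L\neq\emptyset\}=\{\xi\in LK\}$, where $LK=\{g(z+x):(x,g)\in L,\,z\in K\}$ is compact as a continuous image of $L\times K$, so measurability on compact test sets is immediate; the case of $n$ points then follows because random closed sets are stable under finite intersections (Theorem~1.3.25 in \cite{mo1}). Your route proves the same statement and handles all $n$ points at once via the maximum in $\Phi$, but the extra bookkeeping is precisely where the slip occurred; with the compact-set fix above it goes through.
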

\begin{proof}
  Let $\XX_\xi:=\{(x,g)\in\HH:\xi\in g(K+x)\}$.  For each compact set
  $L\subset\HH$, we have
  \begin{equation}\label{eq:is_random_set}
    \big\{\omega\in\Omega:\;\XX_{\xi(\omega)}\cap L\neq\emptyset \big\}
    =\big\{\omega\in\Omega:\;\xi(\omega)\in LK \big\},
  \end{equation}
  where $LK:=\{g(z+x):(x,g)\in L, z\in K\}$. Note that $LK$ is a
  compact set, hence it is Borel, and the event on the right-hand side
  of \eqref{eq:is_random_set} is measurable. Thus, in view of
  \eqref{eq:is_random_set}, $\XX_\xi$ is a random closed set in the
  sense of Definition~1.1.1 in \cite{mo1}. Hence,
  \begin{displaymath}
    \XX_{K,\HH}(\Xi_n)=\XX_{\xi_1}\cap\cdots\cap \XX_{\xi_n}
  \end{displaymath}
  is also a random closed set, being a finite intersection of random
  closed sets, see Theorem~1.3.25 on \cite{mo1}. 
\end{proof}

We are interested in the asymptotic properties of 
$\XX_{K,\HH}(\Xi_n)$ as $n\to\infty$.  Note that the sequence of sets
$(Q_n)$ is increasing in $n$ and, for every $n\in\NN$,
$P_n:=\conv(\Xi_n)\subseteq Q_n$. Since $P_n$ converges almost surely
to $K$ in the Hausdorff metric as $n\to\infty$, the sequence $(Q_n)$
also converges almost surely to $K$. Since the sequence of sets
$(\XX_{K,\HH}(\Xi_n))$ is decreasing in $n$, 
\begin{equation}
  \label{eq:x_n_converges_to_k_minus_k}
  \XX_{K,\HH}(\Xi_n) \;\downarrow\; (K\ominus_{K,\HH}
  K)=\big\{(x,g)\in\HH: K\subseteq g(K+x) \big\}
  \quad \text{a.s. as}\quad n\to\infty.
\end{equation}
Since we assume $(0,\II)\in \HH$, the set $K\ominus_{K,\HH} K$
contains $(0,\II)$. However, the set $K\ominus_{K,\HH} K$ may contain
other points, e.g., all $(0,g)\in\HH$ such that $K\subset gK$.

It is natural to ask whether it is possible to renormalise, in an
appropriate sense, the set $\XX_{K,\HH}(\Xi_n)$ such that it would
converge to a random limit? Before giving a rigorous answer to this
question we find it more instructive to explain our approach
informally. While doing this, we shall also recollect necessary
concepts, and introduce some further notation.

First of all, note that 
$$
\XX_{K,\HH}(\Xi_n)=\XX_{K,\R^d\times \gl}(\Xi_n)\cap
\HH\quad\text{and}\quad K\ominus_{K,\HH} K=(K\ominus_{K,\R^d\times \gl} K)\cap \HH.
$$
Thus, we can first focus on the special case $\HH=\R^d\times \gl$
and then derive the corresponding result for an arbitrary $\HH$ by taking
intersections. Denote
\begin{displaymath}
  \XX_n:=\XX_{K,\R^d\times \gl}(\Xi_n). 
\end{displaymath}

In order to quantify the convergence in
\eqref{eq:x_n_converges_to_k_minus_k} and derive a meaningful limit
theorem for $\XX_n$, we shall pass to tangent spaces. The vector space $\Matr$ is 
a tangent space to the Lie group $\gl$ at $\II$ and
is the Lie algebra of $\gl$. However, for our purposes the
multiplicative structure of the Lie algebra is not needed and we
use only its linear structure as of a vector space over $\R$. Let
$$
\exp: \Matr\to \gl
$$
be the standard matrix exponent, and let $\VV$ be a sufficiently
small neighbourhood of $\II$ in $\gl$, where the exponent is
bijective, see, for example, Theorem 2.8 in \cite{Hall:2003}. Finally,
let $\log:\VV\to \Matr$ be its inverse and
define mappings
$\widetilde{\log}: \R^d \times \VV \to \R^d \times \log\VV$ and
$\widetilde{\exp}: \R^d \times \log \VV \to \R^d \times \VV$ by
$$
\widetilde{\log}(x,g)=(x,\log g),
\quad \widetilde{\exp}(x,h)=(x,\exp h),
\quad x\in\R^d,\; g\in\VV,\; h\in \log\VV.
$$
Using the above notation, we can write
\begin{multline*}
  \widetilde{\log}\big(\XX_n\cap (\R^d \times \VV)\big)
  =\big\{(x,C)\in \R^d\times \log \VV : \Xi_n\subseteq \exp(C)(K+x)\big\}\\
  =\big\{(x,C)\in \RM:  \Xi_n\subseteq \exp(C)(K+x)\big\}\cap (\R^d\times \log \VV)
  =\fX_n \cap (\R^d\times \log \VV),
\end{multline*}
where we set
$$
  \fX_n:=\big\{(x,C)\in\RM:\Xi_n\subseteq \exp(C)(K+x) \big\}.
$$
In the definition of $\fX_n$ the space $\RM$ should be regarded as a
tangent vector space at $(0,\II)$ to the Lie group of all invertible
affine transformations of $\R^d$. Similarly to Lemma~\ref{lemma:XXn},
it is easy to see that $\fX_n$ is a random closed set in $\RM$. Note
that $\fX_n$ may be unbounded (in the product of the standard norm on
$\R^d$ and some matrix norm on $\Matr$) and, in general, is not
convex. 

We shall prove below, see Theorem~\ref{thm:main1}, that the sequence
$(n\fX_n)$ converges in distribution to a nondegenerate random set
$\check{\fZ}_K=\{-z:z\in\fZ_K\}$ as random closed sets, see the
Appendix for necessary formalities. We pass from the random set
$\fZ_K$ defined at \eqref{thm:main1:claim} to its reflected variant to
simplify later notation.  Moreover, for an arbitrary compact convex
subset $\fK$ in $\RM$ which contains the origin, the sequence of
random sets $(n\fX_n\cap \fK)$ converges in distribution to
$\check{\fZ}_K\cap \fK$ on the space of compact subsets of $\RM$
endowed with the usual Hausdorff metric.

Since $\VV$ contains the origin in its interior, there exists an
$n_0\in\NN$ such that
\begin{equation}\label{eq:compact_inside_neihgborhood}
  n(\R^d\times \log \VV)\supseteq \fK
  \quad\text{and}\quad
  \R^d\times \VV\supseteq \widetilde{\exp}(\fK/n),\quad n\geq n_0.
\end{equation}
Hence,
$$
n\fX_n\cap \fK=n\big(\fX_n \cap (\R^d\times \log \VV)\big)
\cap \fK,\quad n\geq n_0.
$$
and, therefore,
$n\,\widetilde{\log}(\XX_n\cap (\R^d \times \VV))\cap \fK$ converges
in distribution to $\check{\fZ}_K\cap \fK$ as $n\to\infty$. In
particular, the above arguments show that the limit does not depend on
the choice of $\VV$.

Let us now explain the case of an arbitrary $\HH\subset \R^d\times\gl$
containing $(0,\II)$ and introduce assumptions that we shall impose on
$\HH$. Assume that the following objects exist:
\begin{itemize}
\item a neighbourhood $\mathfrak{U}\subset \R^d\times \log\VV$ of
  $(0,0)$ in $\RM$;
\item a neighbourhood $\mathbb{U}\subset \R^d\times \VV$ of $(0,\II)$
  in $\R^d\times \gl$;
\item a closed convex cone $\fC_{\HH}$ in $\RM$ with the apex at
  $(0,0)$;
\end{itemize}
such that
\begin{equation}\label{eq:H_condition}
  \HH\cap \mathbb{U}=\widetilde{\exp}(\fC_{\HH}\cap \mathfrak{U}).
\end{equation}
Informally speaking, condition \eqref{eq:H_condition} means that
locally around $(0,\II)$ the set $\HH$ is an image of a convex cone in
the tangent space $\RM$ under the extended exponential map
$\widetilde{\exp}$. The most important particular cases arise when
$\HH$ is the product of a linear space $\TT$ in $\R^d$ and a Lie
subgroup $\GG$ of $\gl$. In this situation $\fC_\HH$ is a linear
subspace of $\RM$ which is the direct sum of $\TT$ and the Lie algebra
$\fG$ of $\GG$. Furthermore, $\fC_\HH$ is a tangent space to $\HH$
(regarded as a product of smooth manifolds) at $(0,\II)$. In a more
general class of examples, we allow $\fC_\HH$ to be the direct sum of
$\TT$ and an arbitrary linear subspace of $\Matr$, which is not
necessarily a Lie algebra. In the latter case, the second component of
$\HH$ is not a Lie subgroup of $\gl$. 
Furthermore, $\fC_{\HH}$ can be
a proper cone, that is, not a linear subspace, so the second
component of $\HH$ is not necessarily a group. For example, assume
that $d=2$ and $\HH=\{0\}\times \GG$, where
\begin{displaymath}
  \GG=\left\{
      \begin{pmatrix}
        \lambda_1 & 0\\
        0 & \lambda_2
      \end{pmatrix},\lambda_1,\lambda_2\in (0,1]\right\}.
\end{displaymath}
Then \eqref{eq:H_condition} holds for appropriate $\mathbb{U}$ and
$\mathfrak{U}$ upon choosing 
\begin{displaymath}
  \fC_{\HH}=\{0\}\times \left\{
      \begin{pmatrix}
        \mu_1 & 0\\
        0 & \mu_2
      \end{pmatrix},\mu_1,\mu_2\leq 0\right\}.
\end{displaymath}
This example is important for the analysis of $(K,\HH)$-hulls because
we naturally want to exclude transformations that enlarge $K$. Examples
of a different kind, where $\HH$ is not a Lie subgroup, arise by
taking $\fC_{\HH}$ to be an arbitrary linear subspace of $\RM$ which
is not a Lie subalgebra.

For an arbitrary compact convex subset $\fK$ of $\RM$ which contains
the origin, the set $\fK\cap\fC_{\HH}$ is also compact convex and
contains the origin. Furthermore, there exists an $n_0\in\NN$ such
that
$$
\fK\cap \fC_{\HH} \subset n(\fC_{\HH}\cap \mathfrak{U})
=n\,\widetilde{\log}(\HH\cap \mathbb{U})
\subset n\,\widetilde{\log}(\HH\cap (\R^d\times \VV)),\quad n\geq n_0.
$$
Since $\fK\cap \fC_{\HH}$ is a compact convex set which contains the
origin,
\begin{align*}
  &\hspace{-1cm}n\,\widetilde{\log}\left(\XX_{K,\HH}(\Xi_n)
    \cap (\R^d\times \VV)\right)\cap(\fK\cap \fC_{\HH})\\
  &=n\,\widetilde{\log}\left(\XX_n
    \cap (\HH \cap (\R^d\times \VV))\right)\cap (\fK\cap \fC_{\HH} )\\
  &=n\fX_n \cap n\,\widetilde{\log}(\HH \cap (\R^d\times \VV))
    \cap (\fK\cap \fC_{\HH} )=n\fX_n \cap (\fK\cap \fC_{\HH})
\end{align*}
converges to $\check{\fZ}_K\cap \fK\cap \fC_{\HH}$ as $n\to\infty$. The limit
here is also independent of $\VV$.

Let us make a final remark in this informal discussion by connecting
the convergence of the sequence
$(n\,\widetilde{\log}\left(\XX_n\cap (\R^d\times \VV)\right))$ and
relation \eqref{eq:x_n_converges_to_k_minus_k}. The above argument
demonstrates that $\check{\fZ}_K$ necessarily contains a nonrandom set
\begin{align}
  \label{eq:rec_cone_of_fZ}
  R_K:&=\liminf_{n\to\infty} \left(n\,\widetilde{\log}
        ((K\ominus_{K,\R^d\times\gl}K)\cap (\R^d\times
        \VV))\right)\notag \\
      &=\bigcup_{k\geq 1}\bigcap_{n\geq k}\bigcap_{y\in K}
        \left\{(x,C)\in \RM: y\in \exp(C/n)(K+x/n)\right\},
\end{align}
which is unbounded. As we shall show, the set $R_{K}$ is, indeed,
contained in the recession cone of $\check{\fZ}_K$ which we identify in
Proposition~\ref{prop:rec_cone} below.

\section{Limit theorems for \texorpdfstring{$\XX_{K,\HH}(\Xi_n)$}{X{K,H}(Xi n)}}
\label{sec:poiss-point-proc}

Recall that $N(K,x)$ denotes the normal cone to $K$ at
$x\in\partial K$, where $K\in\sK_{(0)}^d$.  By $\Nor(K)$ we denote the
normal bundle, that is, a subset of $\partial K\times\Sphere$, which
is the family of $(x,N(K,x)\cap \Sphere)$ for $x\in\partial K$.  It is
known, see page~84 in \cite{schn2}, that $K$ has the unique outer unit
normal $u_K(x)$ at $x\in\partial K$ for almost all points $x$ with
respect to the $(d-1)$-dimensional Hausdorff measure
$\sH^{d-1}$. Denote the set of such points by $\Sigma_1(K)$,
so $\Sigma_1(K):=\{x\in\partial K: {\rm dim}\;N(K,x)=1\}$.

Let $\Theta_{d-1}(K,\cdot)$ be the generalised curvature measure of
$K$, see Section~4.2 in~\cite{schn2}.  The following formula, which is a
consequence of Theorem 3.2 in \cite{Hug:1998}, can serve as a
definition and is very convenient for practical purposes.  If $W$
is a Borel subset of $\R^{d}\times\Sphere$, then
$$
\Theta_{d-1}\big(K,(\partial K\times \Sphere )\cap W\big)
=\int_{\Sigma_1(K)} \one_{\{(x,u_K(x))\in W \}} \sH^{d-1}({\rm d}x).
$$
In particular, this formula implies that the support of
$\Theta_{d-1}(K,\cdot)$ is a subset of $\Nor(K)$ and its total mass is
equal to the surface area of $K$.

Let $\sP_K:=\sum_{i\geq 1}\delta_{(t_i,\eta_i,u_i)}$ be the Poisson
process on $(0,\,\infty)\times \Nor(K)$ with intensity measure $\mu$
being the product of Lebesgue measure on $(0,\infty)$ normalised by
$V_d(K)^{-1}$ and the measure $\Theta_{d-1}(K,\cdot)$.
If $K$ is strictly convex, $\sP_K$ can be
equivalently defined as a Poisson process
$\{(t_i,F(K,u_i),u_i), i\geq1\}$, where $\{(t_i,u_i),i\geq1\}$ is the
Poisson process on $(0,\infty)\times\Sphere$ with intensity being the
product of the Lebesgue measure on the half-line normalised by
$V_d(K)^{-1}$ and the surface area measure
$S_{d-1}(K,\cdot):=\Theta_{d-1}(K,\R^d\times\cdot)$ of $K$.

The notion of convergence of random closed sets in distribution with
respect to the Fell topology is recalled in the Appendix. For
$L\subset\RM$, denote by 
$$
\check{L}:=\big\{(-x,-C):(x,C)\in L\big\}
$$
the reflection of $L$ with respect to the origin in $\RM$.

\begin{theorem}\label{thm:main1}
  Assume that $K\in\sK^d_{(0)}$, and let $\fF$ be a closed convex set
  in $\RM$ which contains the origin. The sequence of random closed sets
  $((n\fX_n)\cap\fF)_{n\in\NN}$ converges in distribution in the space of
  closed subsets of $\RM$ endowed with the
  Fell topology to a random closed convex set $\check{\fZ}_K\cap\fF$, where
  \begin{equation}
    \label{thm:main1:claim}
    \fZ_{K}:=\bigcap_{(t,\eta,u)\in\sP_K}
    \Big\{(x,C)\in \RM:\langle C \eta+x,u\rangle \leq t\Big\}.
  \end{equation}
\end{theorem}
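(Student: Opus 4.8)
The plan is to establish convergence in distribution with respect to the Fell topology by combining a finite-dimensional-type convergence of the ``generating'' data with a continuity/continuous-mapping argument. The key observation is that both $n\fX_n$ and the limit $\check\fZ_K$ can be written as intersections of half-spaces in $\RM$ indexed by certain point processes. Indeed, the condition $\xi_i\in\exp(C/n)(K+x/n)$ rescaled by $n$ is, to first order, a linear condition on $(x,C)$ governed by the closest supporting hyperplane of $K$ at the ``landing point'' of $\xi_i$; more precisely, writing $\xi_i = y_i - t_i^{(n)}\eta_i/n + o(1/n)$ for $y_i\in\partial K$ with outer normal $u_i$, the inclusion becomes asymptotically $\langle C\eta_i + x, u_i\rangle \le t_i^{(n)}$. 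The scaled empirical data $\{(n\,\mathrm{dist}(\xi_i,\partial K)\text{-type quantity}, \eta_i, u_i)\}$ converges to the Poisson process $\sP_K$; this is the standard extreme-value heuristic that the points of a uniform sample nearest the boundary, after rescaling distances by $n$, form asymptotically a Poisson process whose intensity is Lebesgue measure (normalised by $V_d(K)$) in the depth variable times the generalised curvature measure $\Theta_{d-1}(K,\cdot)$ in the boundary-normal variable.

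\textbf{Key steps.}
First I would make precise the linearisation: for fixed $(x,C)$ in a bounded region, expand $\exp(C/n)(K+x/n)$ and show that $n\xi_i\in n\exp(C/n)(K+x/n)$ holds for all $i$ iff, up to a vanishing error, $\langle C\eta_i+x, u_i\rangle\le \tau_i^{(n)}$ where $\tau_i^{(n)} := n\,h(K,u_i)\bigl(1 - \langle\xi_i,u_i\rangle/h(K,u_i)\bigr)$-type scaled boundary distances and $(y_i,u_i)\in\Nor(K)$ is a nearest boundary point. Second, I would invoke the Poisson limit for the rescaled boundary process: the point process $\sum_i \delta_{(\tau_i^{(n)},y_i,u_i)}$ on $(0,\infty)\times\Nor(K)$ converges in distribution to $\sP_K$. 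This is the probabilistic heart and should follow from a Poisson approximation (e.g. via the local approach / Chen--Stein or via a direct void-probability computation using $\Prob{\xi\notin \text{boundary slab}}$ and the coarea-type formula defining $\Theta_{d-1}$). Third, I would express $n\fX_n\cap\fF$ as a measurable functional of this point process (an intersection of half-spaces, intersected with $\fF$) and $\check\fZ_K\cap\fF$ as the same functional of $\sP_K$. Fourth, I would verify that this functional is continuous (a.s.\ with respect to the law of $\sP_K$) as a map from the space of locally finite point configurations into the space of closed subsets of $\RM$ with the Fell topology; here one uses that only finitely many points of $\sP_K$ are relevant for determining the intersection inside any fixed compact set (a consequence of the unboundedness direction $R_K$ lying in the recession cone, which guarantees that deep points impose no constraint near the origin), plus lower/upper semicontinuity of intersections of half-spaces under perturbation of the defining data. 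Finally, the continuous-mapping theorem yields the claim; the passage from $\fF=\RM$ to general $\fF$ is immediate since intersecting with a fixed closed convex set is continuous in the Fell topology.

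\textbf{Main obstacle.}
I expect the hard part to be the third and fourth steps combined: controlling uniformity of the linearisation error and establishing the a.s.\ continuity of the ``intersection of half-spaces'' functional. The subtlety is that $\fX_n$ is neither convex nor bounded, so one cannot directly compare it with its limit on all of $\RM$; one must localise to compact $\fK$ and argue that, on $\fK$, only finitely many (random but a.s.\ finite) constraints from points $\xi_i$ near $\partial K$ are active, while the contribution of $\exp(C/n)$ being genuinely nonlinear is negligible on $\fK/n$. A second delicate point is that the generalised curvature measure $\Theta_{d-1}(K,\cdot)$ may have singular parts (edges, vertices of $K$), so the Poisson limit and the linearisation must be phrased in terms of $\Nor(K)$ and the curvature measure rather than a smooth boundary parametrisation; the formula quoted from \cite{Hug:1998} is exactly what makes the void-probability computation go through. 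Handling non-smooth $K$ rigorously — in particular ensuring that the exceptional set $\partial K\setminus\Sigma_1(K)$ carries no mass and that points landing near edges contribute negligibly — is where I anticipate the technical work to concentrate; everything else is bookkeeping around the continuous-mapping theorem and the Appendix's random-closed-set formalism.
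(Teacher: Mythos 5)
The overall skeleton you propose --- linearise $\exp(\pm C/n)$, identify a Poisson limit for the rescaled boundary data, and pass to the limit in the resulting intersection --- is indeed the paper's strategy, and your ``main obstacle'' paragraph correctly locates several of the real difficulties (non-convexity and unboundedness of $\fX_n$, non-smooth $K$, the role of $\Theta_{d-1}$). However, two of your steps contain genuine gaps.

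First, your closing claim that ``intersecting with a fixed closed convex set is continuous in the Fell topology'' is false: the map $F\mapsto F\cap L$ is discontinuous in general (Theorem~12.2.6 in \cite{sch:weil08}), and circumventing this is precisely where most of the paper's technical effort is spent. Continuity holds only under extra hypotheses --- $F\cap L$ regular closed, or both sets convex with $0\in\Int F$ (Lemma~\ref{lemma:int-det}) --- and the relevant prelimit objects, namely the lower convex approximants $\fY_{m,n}^-$, may be empty or may fail to contain the origin in their interiors, which forces the conditional formulation of Lemma~\ref{lemma:intersection} (convergence of $\Prob{Y_n\cap F\cap L\neq\emptyset,\,0\in Y_n}$ rather than of the plain capacity functional). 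Moreover, since $n\fX_n$ is not convex, you cannot first prove $n\fX_n\dodn\check{\fZ}_K$ and then intersect with $\fF$; one must intersect the convex sandwiching sets with $\fF$ \emph{before} passing to the limit, as the paper does.

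Second, your continuous-mapping step treats $n\fX_n$ as the ``intersection-of-half-spaces'' functional applied to a prelimit point process, but the prelimit constraints $\{(x,C): -C\xi_k-x\in n(K-\xi_k)+B_{a_m}\}$ are preimages of translates of $nK$, not half-spaces; they only become half-spaces in the limit. The paper therefore avoids a continuous-mapping argument on configuration space altogether: it tracks the full rescaled sets $n(K-\xi_k)$ through their polars $n^{-1}(K-\xi_k)^o$ (Lemma~\ref{lem:marked_point_processes}, whose proof rests on Theorem~1 of \cite{kid:rat06} for the asymptotics of $V_d(K\setminus(K\ominus n^{-1}L^o))$ --- this is the external input your ``void-probability computation'' would require for non-smooth $K$, and it is not mere bookkeeping), and then converts convergence of the inclusion functionals $\Prob{\fL\subset\fY_{m,n}^{\pm}}$ into Fell convergence via Theorem~\ref{thm:inclusion} for regular closed convex random sets, assembled by the sandwich Lemma~\ref{lemma:approx}. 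Your plan is repairable along these lines, but as written the fourth step and the final reduction to general $\fF$ do not go through.
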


\begin{remark}
  While $\fX_n$ is not convex in general, the set
  $\check{\fZ}_K$ from \eqref{thm:main1:claim} is almost surely convex
  as an intersection of convex sets.
\end{remark}

Letting $\fF=\RM$ in Theorem~\ref{thm:main1} shows that $n\fX_n$
converges in distribution to $\check{\fZ}_K$.  If $\fF=\fK$ is a compact
convex set which contains the origin in $\RM$, the theorem
covers the setting of Section~\ref{sec:random}. Taking into account
the discussion there, we obtain the following.

\begin{corollary}
  Assume that $K\in\sK^d_{(0)}$, and let $\HH$ be a subset of
  $\R^d\times \gl$ which satisfies \eqref{eq:H_condition}. Then, the
  sequence of random closed sets
  \begin{displaymath}
    n\,\widetilde{\log}\left(\XX_{K,\HH}(\Xi_n)\cap
      (\R^d\times \VV)\right)\cap \fC_{\HH},\quad n\in\NN,
  \end{displaymath}
  converges in distribution in the space of closed subsets of $\RM$
  endowed with the Fell topology to the random closed convex set
  $\check{\fZ}_{K}\cap \fC_{\HH}$.
\end{corollary}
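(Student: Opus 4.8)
The plan is to derive the corollary directly from Theorem~\ref{thm:main1} by taking the cone $\fC_{\HH}$ itself as the set $\fF$, and then transporting the resulting convergence along the window-wise identification already recorded in Section~\ref{sec:random}. Write
\[
  \fY_n:=n\,\widetilde{\log}\big(\XX_{K,\HH}(\Xi_n)\cap(\R^d\times\VV)\big)\cap\fC_{\HH}
  \qquad\text{and}\qquad
  \fZ_n:=(n\fX_n)\cap\fC_{\HH}
\]
for the random set appearing in the statement and for the set built from $n\fX_n$. Since $\fC_{\HH}$ is a closed convex cone with apex at the origin, it is in particular a closed convex set containing $0$, so Theorem~\ref{thm:main1} applies with $\fF=\fC_{\HH}$ and gives $\fZ_n\dodn\check{\fZ}_K\cap\fC_{\HH}$ in the Fell topology, with $\fZ_K$ as in \eqref{thm:main1:claim}. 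It then remains only to show that $\fY_n$ has the same Fell limit as $\fZ_n$.

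I then establish that $\fY_n$ and $\fZ_n$ coincide inside every compact window for all large $n$. Fix a compact convex set $\fK\subset\RM$ containing the origin. Because $\mathfrak{U}$ is a neighbourhood of $(0,0)$, the compact convex set $\fK\cap\fC_{\HH}$ satisfies $\fK\cap\fC_{\HH}\subset n\,\widetilde{\log}(\HH\cap(\R^d\times\VV))$ for all $n\geq n_0(\fK)$, and simultaneously $\fK\subset n(\R^d\times\log\VV)$ by \eqref{eq:compact_inside_neihgborhood}. Using $\XX_{K,\HH}(\Xi_n)=\XX_n\cap\HH$, the defining relation \eqref{eq:H_condition}, and the fact that $\widetilde{\log}$ is a bijection onto its image and hence commutes with intersections of subsets of $\R^d\times\VV$, the chain of equalities from Section~\ref{sec:random} yields
\[
  \fY_n\cap\fK
  =n\fX_n\cap n\,\widetilde{\log}\big(\HH\cap(\R^d\times\VV)\big)\cap(\fK\cap\fC_{\HH})
  =n\fX_n\cap(\fK\cap\fC_{\HH})
  =\fZ_n\cap\fK,\qquad n\geq n_0(\fK),
\]
the second equality holding because $\fK\cap\fC_{\HH}$ is already contained in the middle factor for such $n$. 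The essential feature exploited here is that \eqref{eq:H_condition} renders the exponential identification between $\HH$ and $\fC_{\HH}$ \emph{exact} on the relevant window rather than merely approximate, so no error term arises; in particular, on $\fK$ the set $\fY_n$ is closed, being equal to the closed set $\fZ_n\cap\fK$.

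Finally, I would upgrade this eventual window-wise coincidence to equality of Fell limits. Convergence in distribution in the Fell topology on the closed subsets of the locally compact space $\RM$ is a local property: by the compactness and metrizability of this hyperspace it is governed by the capacity functionals $C\mapsto\Prob{F\cap C\neq\emptyset}$ evaluated on a convergence-determining class of compact sets $C$, as recalled in the Appendix. For each fixed such $C$ one has $\fY_n\cap C=\fZ_n\cap C$ almost surely for all $n\geq n_0(C)$, so the two sequences have identical capacity functionals from some index onwards, whence $\Prob{\fY_n\cap C\neq\emptyset}$ and $\Prob{\fZ_n\cap C\neq\emptyset}$ share the same limit. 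Combined with $\fZ_n\dodn\check{\fZ}_K\cap\fC_{\HH}$ from the first step, this forces $\fY_n\dodn\check{\fZ}_K\cap\fC_{\HH}$, as desired. The main obstacle is precisely this localisation: one must argue that agreement on each compact window only from a window-dependent index $n_0(\fK)$ onwards still transfers the full distributional Fell limit, which is where the locality of the Fell topology and the convergence-determining role of the capacity functionals are indispensable. As a byproduct, the limit is seen to be independent of the auxiliary neighbourhood $\VV$ (and hence of $\mathbb{U}$ and $\mathfrak{U}$), since $\fZ_n$ does not involve any of these choices.
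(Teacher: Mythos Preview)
Your proposal is correct and follows essentially the same route as the paper: the identity $\fY_n\cap\fK=(n\fX_n)\cap(\fK\cap\fC_{\HH})$ for large $n$ is exactly the chain of equalities recorded at the end of Section~\ref{sec:random}, and the corollary is then read off from Theorem~\ref{thm:main1}. The only mild difference is that the paper invokes Theorem~\ref{thm:main1} with the compact set $\fF=\fK\cap\fC_{\HH}$ on each window, whereas you apply it once with $\fF=\fC_{\HH}$ and then use the window-wise coincidence together with capacity functionals to transfer the Fell limit; both arguments are equivalent, and your explicit handling of the window-dependent index $n_0(\fK)$ via the convergence-determining class is a clean way to make the localisation step rigorous.
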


The subsequent proof of Theorem~\ref{thm:main1} heavily relies on a
series of auxiliary results on the properties of the Fell topology and
convergence of random closed sets, which are collected in the
Appendix. We encourage the readers to acquaint themselves with the
Appendix before proceeding further.

We start the proof of Theorem~\ref{thm:main1} with an auxiliary result
which is an extension of Theorem~5.6 in \cite{MarMol:2021}. Let
$\sK^d_0$ be the space of compact convex sets in $\R^d$ containing the
origin and endowed with the Hausdorff metric. Let $L^{o}$ denote the
polar set to a closed convex set $L$ in $\R^d$, that is,
\begin{equation}\label{eq:polar}
  L^o:=\{x\in\R^d: h(L,x)\leq 1\}.
\end{equation}
In what follows we shall frequently use the relation
\begin{equation}\label{eq:half-space_polar}
  [0,t^{-1}u]^o=H^{-}_{u}(t),\quad t>0,\quad u\in\Sphere,
\end{equation}
where 
$$
H^{-}_{u}(t):=\{x\in\R^d:\langle x,u\rangle\leq t\},\quad t\in\R,\quad u\in\Sphere.
$$

From Theorem~5.6 in \cite{MarMol:2021} we know that
\begin{displaymath}
  \sum_{k=1}^{n}\delta_{n^{-1}(K-\xi_k)^{o}}\;\dodn\;
  \sum_{(t,\eta,u)\in\sP_K}\delta_{[0,\,t^{-1}u]}
  \quad \text{as}\quad n\to\infty,
\end{displaymath}
where the convergence is understood as the convergence in distribution
on the space of point measures on\footnote{It would be more precise to
  write $\sK^d_0\setminus\{\{0\}\}$ instead of
  $\sK^d_0\setminus\{0\}$, but we prefer the latter notation for the
  sake of notational simplicity.} $\sK^d_0\setminus\{0\}$ endowed with
the vague topology. The limiting point process consists of random
segments $[0,\,x]$ with $x=t^{-1}u$ derived from the first and third
coordinates of $\sP_K$. Regarding $\xi_k$ as a mark of
$n^{-1}(K-\xi_k)^{o}$ for $k=1,\ldots,n$, we have the following
convergence of marked point processes, which strengthens the above
mentioned result from \cite{MarMol:2021}.

\begin{lemma}\label{lem:marked_point_processes}
  Assume that $K\in\sK_{(0)}^d$. Then
  \begin{equation}
    \label{eq:17}
    \sum_{k=1}^{n}\delta_{(n^{-1}(K-\xi_k)^{o},\,\xi_k)}
    \;\dodn\;
    \sum_{(t,\eta,u)\in \sP_K}\delta_{([0,t^{-1}u],\eta)}
    \quad \text{as}\quad n\to\infty,
  \end{equation}
  where the convergence is understood as the convergence in
  distribution on the space of point measures on
  $(\sK^d_0\setminus\{0\})\times \R^d$ endowed with the vague topology.
\end{lemma}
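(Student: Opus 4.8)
The plan is to upgrade the unmarked convergence from Theorem~5.6 in \cite{MarMol:2021} to the marked version by exploiting the structure of marked point process convergence. Recall that convergence in distribution of point processes on a Polish space is equivalent to convergence of Laplace functionals against continuous compactly supported test functions; and for marked point processes, there is a standard criterion (see, e.g., the theory of marked point processes in \cite{mo1} or Kallenberg) reducing convergence to the joint convergence of the ground process together with the mark distributions at each point. So the first step is to set up the Laplace functional of the left-hand side of \eqref{eq:17}: for a nonnegative continuous $f$ with compact support on $(\sK^d_0\setminus\{0\})\times\R^d$, write
\[
  \E\exp\Big(-\sum_{k=1}^n f\big(n^{-1}(K-\xi_k)^o,\xi_k\big)\Big)
  =\Big(\E\exp\big(-f(n^{-1}(K-\xi)^o,\xi)\big)\Big)^n,
\]
using that the $\xi_k$ are i.i.d.\ uniform on $K$.

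The second step is to analyse this $n$-th power. Since $f$ has compact support in the first coordinate, $f(n^{-1}(K-\xi)^o,\xi)$ is nonzero only when $n^{-1}(K-\xi)^o$ lies in a fixed compact subset of $\sK^d_0\setminus\{0\}$, which forces $\xi$ to be close to $\partial K$ (at distance of order $1/n$); on this event one has, by the Taylor-type expansion underlying the proof of Theorem~5.6 in \cite{MarMol:2021}, the asymptotics $n^{-1}(K-\xi)^o\approx[0,t^{-1}u]$ where $t$ is (up to the volume normalisation) the scaled distance from $\xi$ to the supporting hyperplane and $u=u_K$ is the outer normal at the nearest boundary point, while the mark $\xi$ itself converges to the corresponding boundary point $\eta\in\Sigma_1(K)$. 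The key point is that the mark $\xi$ and the pair $(t,u)$ are \emph{asymptotically determined by the same boundary point}, so in the limit the mark attached to the segment $[0,t^{-1}u]$ is deterministically $\eta$, consistent with the right-hand side of \eqref{eq:17}. Concretely, one shows
\[
  n\Big(1-\E\exp\big(-f(n^{-1}(K-\xi)^o,\xi)\big)\Big)
  \;\longrightarrow\;
  \int_{(0,\infty)\times\Nor(K)}\big(1-e^{-f([0,t^{-1}u],\eta)}\big)\,\mu(\diff(t,\eta,u)),
\]
which identifies the limiting Laplace functional as that of the Poisson process $\sum_{(t,\eta,u)\in\sP_K}\delta_{([0,t^{-1}u],\eta)}$.

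The main obstacle is the bookkeeping in this last limit: one must change variables near $\partial K$ in a way that simultaneously tracks the mark $\xi$ and the rescaled polar body, uniformly over the (compactly supported) test function, and match the resulting measure with the product of normalised Lebesgue measure on $(0,\infty)$ and the generalised curvature measure $\Theta_{d-1}(K,\cdot)$. The clean way to do this is to \emph{not redo} the computation: the unmarked statement of Theorem~5.6 in \cite{MarMol:2021} already gives the limit of $n(1-\E e^{-f_0(n^{-1}(K-\xi)^o)})$ for test functions $f_0$ not depending on the mark, with precisely the measure $\mu$ appearing; one then observes that for a mark-dependent $f$ the same localisation shows $\xi\to\eta$ in the sense that replacing $f(n^{-1}(K-\xi)^o,\xi)$ by $f(n^{-1}(K-\xi)^o,\pi_{\partial K}(\xi))$ (projection onto $\partial K$) introduces an error that is $o(1/n)$ uniformly, after which the mark has been turned into a continuous function of the boundary point and the unmarked result applies directly. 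Finally, once Laplace functionals converge, one invokes the standard equivalence (as recorded in the Appendix and in \cite{mo1}) to conclude convergence in distribution with respect to the vague topology, and checks that the limiting intensity measure is locally finite on $(\sK^d_0\setminus\{0\})\times\R^d$ — which it is, since $\mu$ is locally finite on $(0,\infty)\times\Nor(K)$ and the map $(t,\eta,u)\mapsto([0,t^{-1}u],\eta)$ is proper onto compact subsets avoiding $\{0\}$.
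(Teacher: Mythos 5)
Your overall scheme --- Laplace functionals, the $n$-th power factorisation, localisation of the support of $f$ to an $O(1/n)$-neighbourhood of $\partial K$, and the $o(1/n)$ replacement of the mark $\xi$ by its metric projection $p(\partial K,\xi)$ --- is a legitimate alternative to the route the paper takes, which instead reduces the marked convergence to a Grigelionis-type criterion (Proposition~\ref{prop:marked-sets}) for the scaled probabilities $n\Prob{n^{-1}(K-\xi)^{o}\not\subseteq L,\,\xi\in R}$. Up to and including the projection step, your argument is sound.

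The final step, however, has a genuine gap. After replacing the mark by $p(\partial K,\xi)$ you assert that ``the unmarked result applies directly''. It does not: the function $(L,y)\mapsto f(L,p(\partial K,y))$ still depends on the second coordinate, whereas Theorem~5.6 of \cite{MarMol:2021} only controls functionals of the rescaled polar bodies $n^{-1}(K-\xi_k)^{o}$ themselves. The limiting segment $[0,t^{-1}u]$ records only the normal direction $u$ and the scaled distance $t$; it does \emph{not} determine the boundary point $\eta$ unless $K$ is strictly convex (for $K$ a cube, all sample points near one facet produce the same $u$ but different $\eta$'s). The entire content of the lemma is precisely this joint refinement, namely the asymptotics of
\begin{equation*}
  n\,\Prob{n^{-1}(K-\xi)^{o}\not\subseteq L,\ \xi\in R}
  = n\,\frac{V_d\big(R\cap (K\setminus(K\ominus n^{-1}L^{o}))\big)}{V_d(K)},
\end{equation*}
localised by an arbitrary Borel set $R$, and this cannot be deduced from the unmarked statement, which is the case $R=\R^d$. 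What is required is a \emph{local} first-order Steiner-type formula for the volume of $K\setminus(K\ominus \eps Q)$ weighted by $\one_{\{p(\partial K,\cdot)\in C\}}$; the paper obtains it from Theorem~1 of \cite{kid:rat06} applied with $C=p(\partial K,R)$, yielding the limit $\frac{1}{V_d(K)}\int_{\Nor(K)}\one_{\{x\in p(\partial K,R)\}}h(L^{o},u)\,\Theta_{d-1}(K,{\rm d}x\times{\rm d}u)$. Your proof becomes complete once you substitute such a localised asymptotic (or an equivalent explicit change of variables near $\partial K$ that carries the mark along) for the appeal to the unmarked theorem; as written, that appeal is circular with respect to the very information the lemma is meant to add.
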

\begin{proof}
  Let $p(\partial K,\cdot)$ be the metric projection on $K$, that is,
  $p(\partial K,x)$ is the set of closest to $x$ points on
  $\partial K$. We start by noting that for the limiting Poisson
  process the following equality holds for all
  $L\in\sK^d_0\setminus\{0\}$ and every Borel $R\subseteq\R^d$
  \begin{align*}
    \P\big\{[0,t^{-1}u]\subset L \;\text{or}\;
    & \eta\notin p(\partial K,R)\text{ for all } (t,\eta,u)\in\sP_K\big\}\\
    &=\exp\big(-\mu(\{(t,\eta,u):
      [0,t^{-1}u]\not\subset L, \eta\in p(\partial K,R)\}\big)\\
    &=\exp\big(-\mu(\{(t,\eta,u):
      L^{o}\not\subset H^{-}_{u}(t), \eta\in p(\partial K,R)\}\big)\\
    &=\exp\big(-\mu(\{(t,\eta,u):
      h(L^o,u)>t, \eta\in p(\partial K,R)\}\big)\\
    &=\exp\Big(-\frac{1}{V_d(K)}
      \int_{\Nor(K)} \one_{\{x\in p(\partial K,R)\}}h(L^o,u)
      \Theta_{d-1}(K,{\rm d}x\times {\rm d}u)\Big).
  \end{align*}

  According to Proposition~\ref{prop:marked-sets} in the Appendix,
  see, in particular, Eq.~\eqref{eq:10v}, we need to show that, for
  every $L\in\sK^d_0\setminus\{0\}$ and every Borel $R\subseteq\R^d$,
  as $n\to\infty$,
  \begin{equation}\label{eq:kid_rataj1}
    n\P\{n^{-1}(K-\xi)^{o}\not\subseteq L,\xi\in R\}
    \;\longrightarrow\;
    \frac{1}{V_d(K)}\int_{\Nor(K)}\one_{\{x\in p(\partial K,R)\}}
      h(L^{o},u)\Theta_{d-1}(K,{\rm d}x\times{\rm d}u).
  \end{equation}
  Note that
  \begin{align*}
    \P\{n^{-1}(K-\xi)^{o} &\not\subseteq L,\xi\in R\}
    =\P\{n^{-1}L^{o}\not\subseteq (K-\xi),\xi\in R\}\\
    &=\P\{\xi\not\in K\ominus n^{-1}L^{o},\xi\in R\}
    =\frac{V_d(R\cap (K\setminus(K\ominus n^{-1}L^{o})))}{V_d(K)}.
  \end{align*}
  Applying Theorem~1 in \cite{kid:rat06} with $C=p(\partial K,R)$,
  $A=K$, $P=B=W=\{0\}$, $Q=-(L^{o})$ and $\eps=n^{-1}$, we obtain
  \eqref{eq:kid_rataj1}. The proof is complete.
\end{proof}

Applying continuous mapping theorem to convergence~\eqref{eq:17}
and using Lemma~\ref{lemma:polar-map}(ii), we obtain the convergence of
marked point processes 
\begin{equation}
  \label{eq:marked_point_processes_conv2}
  \sum_{k=1}^{n}\delta_{(n(K-\xi_k),\,\xi_k)}\;
  \dodn\;\sum_{(t,\eta,u)\in\sP_K}\delta_{(H^{-}_{u}(t),\,\eta)}
  \quad \text{as}\quad n\to\infty.
\end{equation}

\begin{proof}[Proof of Theorem \ref{thm:main1}]
  According to Lemma~\ref{lemma:restriction} in the Appendix it 
  suffices to show that $(n\fX_n)\cap\fF\cap\fLoc$ converges to
  $\check{\fZ}_K\cap \fF\cap \fLoc$ for an arbitrary compact convex
  subset $\fLoc$ of $\RM$, which contains the origin in its interior
  and then pass to the limit $\fLoc\uparrow (\RM)$. It holds
  \begin{align*}
    (n\fX_n)\cap\fLoc
    &=\Big\{(x,C)\in\fLoc:\Xi_n\subseteq \exp(C/n)(K+x/n) \Big\}\\
    &=\bigcap_{k=1}^{n}\Big\{(x,C)\in\fLoc:\xi_k\in
      \exp(C/n)(K+x/n) \Big\}\\
    &=\bigcap_{k=1}^{n}\Big\{(x,C)\in\fLoc:\exp(-C/n)
      \xi_k\in K+x/n \Big\}\\
    &=\bigcap_{k=1}^{n}\Big\{(x,C)\in\fLoc:\big(n(\exp(-C/n)-
      \II)\big)\xi_k-x \in n(K-\xi_k)\Big\}.
  \end{align*}
  Let
  \begin{displaymath}
    a_m:=\sup_{n\geq m}\sup_{(x,C)\in\fLoc,y\in K} \big\|\big(n(\exp(-C/n)-
    \II\big)y+Cy \big\|,\quad m\in\NN.
  \end{displaymath}
  Note that $a_m\to0$ as $m\to\infty$, because 
  $n\big(\exp(-C/n)-\II\big)\to -C$ locally uniformly in $C$ as $n\to\infty$,
  the set $\fLoc$ is compact in $\RM$, and $K$ is compact in
  $\R^d$.

  Let $B_{a_m}$ be the closed ball of radius $a_m$ in $\R^d$
  centred at the origin.  For each $m\in\NN$ and $n\geq m$, we have
  \begin{equation}
    \label{eq:5}
    \fY_{m,n}^-\subset \big((n\fX_n)\cap\fLoc\big) \subset\fY_{m,n}^+,
  \end{equation}
  where
  \begin{displaymath}
    \fY_{m,n}^+:=\bigcap_{k=1}^{n}\big\{(x,C)\in\fLoc:
    -C\xi_k-x \in n(K-\xi_k)+B_{a_m}\big\}
  \end{displaymath}
  and
  \begin{equation}
    \label{eq:minus}
    \fY_{m,n}^-:=\bigcap_{k=1}^{n}\big\{(x,C)\in\fLoc:
    -C\xi_k-x +B_{a_m} \subset n(K-\xi_k)\big\}.
  \end{equation}
  The advantage of lower and upper bounds in \eqref{eq:5} is the
  convexity of $\fY_{m,n}^{\pm}$, which makes their analysis
  simpler. We aim to apply Lemma~\ref{lemma:approx} from the Appendix
  with $Y_{m,n}^{\pm}=\fY_{m,n}^{\pm}\cap \fF$ and
  $X_n=(n\fX_n)\cap \fLoc\cap \fF$.
  
 Let $\fL$ be a compact subset of $\fLoc$.  Denote
  \begin{align*}
    M_m^+(\fL)&:=\big\{(L,y)\in \sK_0^d\times\R^d:
                -C y-x \in L^o+B_{a_m}\;\text{for all}\; (x,C)\in\fL\big\},\\
    M_m^-(\fL)&:=\big\{(L,y)\in \sK_0^d\times\R^d: -C y-x + B_{a_m}\subset L^o
                \;\text{for all}\; (x,C)\in\fL\big\},\\
    M(\fL)&:=\big\{(L,y)\in \sK_0^d\times\R^d: -C y-x \in L^o
            \;\text{for all}\; (x,C)\in\fL\big\}.
  \end{align*}
  Then
  \begin{displaymath}
    \Prob{\fL\subset \fY_{m,n}^{\pm}}
    =\Prob{(n^{-1}(K-\xi_i)^o,\xi_i)\in M_m^{\pm}(\fL),i=1,\dots,n}.
  \end{displaymath}

  By Lemma~\ref{lem:marked_point_processes}, the point process
  $\{(n^{-1}(K-\xi_i)^o,\xi_i),i=1,\dots,n\}$ converges in
  distribution to the Poisson process
  $\big\{([0,t^{-1}u],\eta):(t,\eta,u)\in\sP_K\big\}$.  The sets $M(\fL)$,
  $M_m^{\pm}(\fL)$ are continuity sets for the distribution of the
  limiting Poisson process. Indeed, for each
  $(t,\eta,u)\in(0,\infty)\times\Nor(K)$,
  \begin{align*}
    &\Big\{([0,t^{-1}u],\eta)\in \partial M_m^+(\fL)\Big\}\\
    &=\Big\{-C \eta-x\in H^{-}_{u}(t+a_m)\;
      \text{for all}\; (x,C)\in\fL \Big\}
      \setminus \Big\{-C \eta-x\in \Int H^{-}_{u}(t+a_m)\;\text{for
      all}\; (x,C)\in\fL \Big\}\\
    &=\Big\{\langle -C\eta-x,u\rangle \leq t+a_m
      \;\text{for all}\; (x,C)\in\fL \Big\}
      \setminus \Big\{\langle -C\eta-x,u\rangle< t+a_m
      \;\text{for all}\; (x,C)\in\fL \Big\}\\
    &=\Big\{\langle -C\eta-x,u\rangle \leq t+a_m
      \;\text{for all}\; (x,C)\in\fL\;
      \text{and}\;\langle -C\eta-x,u\rangle = t+a_m
      \;\text{for some}\; (x,C)\in\fL \Big\},
  \end{align*}
  where $\Int$ denotes the topological interior.  Since the
  probability of the latter event for some $(t,\eta,u)\in\sP_K$
  vanishes, it follows that $M_m^{+}(\fL)$ is a continuity set for the
  Poisson point process
  $\{([0,t^{-1}u],\eta):(t,\eta,u)\in\sP_K\}$. Letting $a_m=0$, we
  obtain that $M(\fL)$ is also a continuity set. The argument for
  $M_m^-(\fL)$ is similar by replacing $a_m$ with $(-a_m)$.

  Thus, for all $m\in\NN$,
  \begin{displaymath}
    \Prob{\fL\subset \fY_{m,n}^+}
    \to \Prob{\big\{([0,t^{-1}u],\eta):(t,\eta,u)\in\sP_K\big\}\subset M_m^+(\fL)}
    =\Prob{\fL\subset \fY_m^{+}}
  \end{displaymath}
  as $n\to\infty$, where
  \begin{displaymath}
    \fY_m^+:=\bigcap_{(t,\eta,u)\in\sP_K}
    \Big\{(x,C)\in\fLoc:-C\eta-x \in H^-_u(t+a_m) \Big\}.
  \end{displaymath}
  The random closed sets $\fY_{m,n}^+$ and $\fY_m^+$ are convex and
  almost surely contain a neighbourhood of the origin in $\RM$, hence,
  are regular closed, see the Appendix for the definition. By
  Theorem~\ref{thm:inclusion} applied to the space $\RM$, the random
  convex set $\fY_{m,n}^+$ converges in distribution to
  $\fY_m^+$. Since $\fY_{m,n}^+\dodn \fY_m^+$ as $n\to\infty$, and
  the involved sets almost surely contain the origin in their
  interiors, Corollary~\ref{cor:intersection} yields that
  $(\fY_{m,n}^+\cap\fF)\dodn (\fY_m^+\cap\fF)$ as $n\to\infty$, for
  each closed convex set $\fF$ which contains the origin in $\RM$ (not
  necessarily as an interior point). Thus, we have checked part (i) of
  Lemma~\ref{lemma:approx}.

  We proceed with checking part (ii) of Lemma~\ref{lemma:approx} with
  $Y_m^{-}:=\fY_m^-$, where
  \begin{displaymath}
    \fY_m^-:=\bigcap_{(t,\eta,u)\in\sP_K}
    \Big\{(x,C)\in\fLoc:-C\eta-x \in  H^-_u(t-a_m) \Big\}.
  \end{displaymath} 
  Note that the random sets $\fY_{m,n}^-$ and $\fY_m^-$ may be empty
  and otherwise not necessarily contain the origin.  We need to
  check~\eqref{eq:11}, which in our case reads as follows
  \begin{equation}\label{eq:main_theorem_proof1}
    \Prob{\fY_{m,n}^-\cap\fF\cap\fL\neq\emptyset,0\in \fY_{m,n}^-}
    \to \Prob{\fY_{m}^-\cap\fF\cap\fL\neq\emptyset,0\in \fY_{m}^-}
    \quad\text{as}\quad n\to\infty,
  \end{equation}
  for all compact sets $\fL$ which are continuity sets of
  $\fY_{m}^-\cap\fF$. We shall prove~\eqref{eq:main_theorem_proof1}
  for all compact sets $\fL$ in $\RM$.  To this end, we shall employ
  Lemma~\ref{lemma:intersection} and divide the
  derivation~\eqref{eq:main_theorem_proof1} into several steps, each
  devoted to checking one condition of Lemma~\ref{lemma:intersection}.
  
  \noindent
  {\sc Step 1.} Let us check that, for sufficiently large $n\in\NN$,
  \begin{displaymath}
    \Prob{(0,0)\in \fY_{m,n}^-}=\Prob{(0,0)\in \Int\fY_{m,n}^-}>0,\quad m\in\NN.
  \end{displaymath}
  Since the interior of a finite intersection is the intersection of the
  interiors, and using independence, it suffices to check this for
  each of the sets which appear in the intersection
  in~\eqref{eq:minus}. If $(0,0)$ belongs to
  $Y_k:=\big\{(x,C)\in\fLoc: -C\xi_k-x +B_{a_m} \subset
  n(K-\xi_k)\big\}$, then $B_{a_m} \subset n(K-\xi_k)$. Since $\xi_k$
  is uniform on $K$, we have
  \begin{displaymath}
    \Prob{B_{a_m} \subset n(K-\xi_k)}=\Prob{B_{a_m} \subset n\Int(K-\xi_k)}
  \end{displaymath}
  for all $n$. If $B_{a_m} \subset n\Int(K-\xi_k)$,
  then $-C\xi_k-x +B_{a_m} \subset n(K-\xi_k)$ for all $x$ and $C$
  from a sufficiently small neighbourhood of the origin in
  $\RM$. Furthermore, 
  \begin{displaymath}
    \Prob{(0,0)\in \fY_{m,n}^-}=\Prob{B_{a_m} \subset
      n(K-\xi_k),k=1,\dots,n}
    =\Prob{B_{a_m/n}\subset K\ominus\Xi_n}>0
  \end{displaymath}
  for all sufficiently large $n$. 
  
  \noindent
  {\sc Step 2.}  Let us check that, for each $m\in\NN$,
  \begin{displaymath}
    \Prob{(0,0)\in \fY_{m}^-}=\Prob{(0,0)\in \Int\fY_{m}^-}>0.
  \end{displaymath}
  The equality above follows from the observation that the origin lies
  on the boundary of
  $\big\{(x,C)\in\fLoc:-C\eta-x \in H^-_u(t-a_m) \big\}$ only if $t=a_m$,
  which happens with probability zero. Furthermore,
  $(0,0)\in \fY_{m}^-$ if $t\geq a_m$ for all $(t,\eta,u)\in\sP_K$,
  which has positive probability.
  
  {\sc Step 3.} By a similar argument as we have used for
  $\fY_{m,n}^+$, for every compact subset $\fL$ of $\fLoc$ and
  $m\in\NN$, we have
  $$
    \Prob{\fL\subset \fY_{m,n}^-}
    \to \Prob{\big\{([0,t^{-1}u],\eta):(t,\eta,u)\in\sP_K\big\}
      \subset M_m^-(\fL)}=\Prob{\fL\subset \fY_m^{-}}
    \quad \text{as }\quad n\to\infty.
  $$
  
  Summarising, we have checked all conditions of
  Lemma~\ref{lemma:intersection}. This finishes the proof
  of~\eqref{eq:main_theorem_proof1} and shows that all conditions
  of part (ii) of Lemma~\ref{lemma:approx} hold. It remains to note
  that
  \begin{equation*}
    (\fY_m^+\cap\fF)\downarrow (\check{\fZ}_K\cap\fLoc\cap\fF),
    \quad(\fY_{m}^-\cap\fF)\uparrow(\check{\fZ}_K\cap\fLoc\cap\fF)
    \quad \text{a.s. as}\quad m\to\infty
  \end{equation*}
  in the Fell topology, and
  $$
  \lim_{m\to\infty}\Prob{0\in \fY_{m}^-}=1.
  $$ 
  Thus, by Lemma~\ref{lemma:approx}  
  $(n\fX_n)\cap\fLoc\cap\fF$ converges in distribution to
  $\check{\fZ}_K\cap\fLoc\cap\fF$ as $n\to\infty$. By
  Lemma~\ref{lemma:restriction}, $(n\fX_n)\cap\fF$ converges in
  distribution to $\check{\fZ}_K\cap\fF$.
\end{proof}

\section{Properties of the set \texorpdfstring{$\fZ_K$}{ZK}}
\label{sec:zero-cells-matrix}

\subsection{Boundedness and the recession cone}

The random set $\fZ_{K}$ is a subset of the product space
$\RM$. The latter space can be turned into the
real Euclidean vector space with the inner product given by
\begin{displaymath}
  \langle (x,C_1), (y,C_2) \rangle_1
  :=\langle x,y\rangle +\Tr (C_1C_2^{\top}),
  \quad x,y\in\R^d,\quad C_1,C_2\in\Matr,
\end{displaymath}
where $\Tr$ denotes the trace of a square matrix and $C^{\top}$ is the
transpose of $C\in\Matr$. In terms of this inner product the set
$\fZ_K$ can be written as
\begin{equation}\label{eq:zero_cell_general}
  \fZ_{K}=\bigcap_{(t,\eta,u)\in\sP_K}\Big\{(x,C)\in\RM
  :\langle (x,C),(u,\eta\otimes u)\rangle_1\leq t\Big\}
  =\bigcap_{(t,\eta,u)\in\sP_K}H^{-}_{(u,\eta\otimes u)}(t),
\end{equation}
where $H^{-}_{(u,\eta\otimes u)}(t)$ is a closed half-space of $\RM$
containing the origin, and $\eta\otimes u$ is the tensor product of
$\eta$ and $u$. The boundaries of $H^{-}_{(u,\eta\otimes u)}(t)$,
$(t,\eta,u)\in\sP_K$, constitute a Poisson process on the affine
Grassmannian of hyperplanes in $\RM$ called a Poisson hyperplane
tessellation. The random set obtained as the intersection of the
half-spaces $H^{-}_{(u,\eta\otimes u)}(t)$, $(t,\eta,u)\in\sP_K$, is
called the zero cell, see Section~10.3 in~\cite{sch:weil08}. The
intensity measure of this tessellation is the measure on the affine
Grassmannian obtained as the product of the Lebesgue measure on $\R_+$
(normalised by $V_d(K)$) and the measure $\nu_K$ obtained as the
pushforward of the generalised curvature measure
$\Theta_{d-1}(K,\cdot)$ under the map
$\Nor(K)\ni (x,u)\mapsto (u,x\otimes u)\in \RM$.  If, for example,
$K=B_1$ is the unit ball, $\nu_K$ is the pushforward of the
$(d-1)$-dimensional Hausdorff measure on the unit sphere $\Sphere$ by
the map $u\mapsto (u,u\otimes u)$.  For a strictly convex and smooth
body $K$, the positive cone generated by
$\{x\otimes u: (x,u)\in \Nor(K)\}$ is called the normal bundle cone of
$K$, see \cite{gruber2014normal}.

Note that the set $\fZ_K$ is almost surely convex, closed and unbounded. Thus,
it is natural to consider the recession cone of $\fZ_K$ which is formally
defined as
$$
{\rm rec}(\fZ_K):=\big\{(x,C)\in \RM: (x,C)+\fZ_K\subseteq \fZ_K\big\}.
$$
For instance, since $\fZ_K$ always contains $(0,-\lambda\II)$ with
$\lambda\geq 0$, the recession cone also contains
$\{(0,-\lambda\II):\lambda\geq0\}$. Recall that $S(K,y)$ is the
supporting cone to $K$ at $y$, see \eqref{eq:supporting-cone}.

\begin{proposition}\label{prop:rec_cone}
  The set $R_K$ defined at \eqref{eq:rec_cone_of_fZ} is contained in
  the following set
  \begin{equation}\label{eq:rec_cone_of_fZ_alt}
    T_K:=\bigcap_{y\in K}\left\{(x,C)\in \RM: -Cy-x\in S(K,y)\right\},
  \end{equation}
  which is a closed convex cone in $\RM$. Furthermore, with probability one
  \begin{equation}\label{eq:rec_cone_claim_1}
    \check{T}_K\subset {\rm rec}(\fZ_K).
  \end{equation}
  Moreover, if $K$ is smooth, then
    $\check{T}_K={\rm rec}(\fZ_K)$, 
  and, with $\HH$ satisfying \eqref{eq:H_condition},
  \begin{equation}\label{eq:rec_cone_claim_2}
    {\rm rec}(\fZ_K\cap\fC_{\HH})=\check{T}_K\cap\fC_{\HH}.
  \end{equation}
  In particular, the limit $\check{\fZ}_K\cap\fC_{\HH}$ of
  $(n\,\widetilde{\log}\left(\XX_{K,\HH}(\Xi_n)\cap (\R^d\times
    \VV)\right))\cap\fC_{\HH}$ is a random compact set with probability one if and
  only if $\check{T}_K\cap\fC_{\HH}=\{(0,0)\}$.
\end{proposition}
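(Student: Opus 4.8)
The plan is to prove the four assertions in the order they are stated. First I would show that $T_K$ is a closed convex cone: for fixed $y\in K$ the set $\{(x,C):-Cy-x\in S(K,y)\}$ is the preimage of the closed convex cone $S(K,y)$ under the linear map $(x,C)\mapsto -Cy-x$, hence a closed convex cone in $\RM$, and $T_K$ is an intersection of such cones. Next I would establish $R_K\subseteq T_K$. Unravelling \eqref{eq:rec_cone_of_fZ}, a pair $(x,C)$ lies in $R_K$ iff for all large $n$ and all $y\in K$ one has $y\in\exp(C/n)(K+x/n)$, i.e. $\exp(-C/n)y-x/n\in K$, i.e. $n(\exp(-C/n)y-y)-x\in n(K-y)$. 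As $n\to\infty$ the left side converges to $-Cy-x$ while $n(K-y)$ increases to $S(K,y)$ (by \eqref{eq:supporting-cone}); since $-Cy-x$ is eventually in the convex sets $n(K-y)$, it lies in their closure $S(K,y)$. This gives membership in $T_K$ for every $y$, hence $R_K\subseteq T_K$.

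For \eqref{eq:rec_cone_claim_1} I would argue directly from the defining intersection \eqref{eq:zero_cell_general}: $\check{T}_K\subseteq{\rm rec}(\fZ_K)$ means that for every $(t,\eta,u)\in\sP_K$ and every $(x,C)\in\check{T}_K$ one has $\langle(x,C),(u,\eta\otimes u)\rangle_1\le 0$, i.e. $\langle Cy+x,u\rangle\le 0$ with $y=\eta$ after sign flip; but $(x,C)\in\check{T}_K$ says $-Cy-x\in\check{S}(K,y)$... more precisely $(x,C)\in T_K$ gives $Cy+x\in S(K,y)$ for $y=\eta\in\partial K$, and since $u\in N(K,\eta)$ we have $\langle v,u\rangle\le 0$ for all $v\in S(K,\eta)$ by \eqref{eq:cone_in_halfspace}; applying this with the reflected set yields the half-space inequality for every atom of $\sP_K$ simultaneously (almost surely every atom has $\eta\in\Sigma_1(K)\subseteq\partial K$), so $\check{T}_K+\fZ_K\subseteq\fZ_K$. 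For the smooth case I would prove the reverse inclusion ${\rm rec}(\fZ_K)\subseteq\check{T}_K$: fix $(x,C)\in{\rm rec}(\fZ_K)$; for smooth $K$ every boundary point $y$ has a one-dimensional normal cone spanned by $u_K(y)$, and the atoms $(t_i,\eta_i,u_i)$ of $\sP_K$ have $\eta_i$ ranging over a set that is a.s.\ dense in $\partial K$ with $u_i=u_K(\eta_i)$; the recession property forces $\langle C\eta_i+x,u_K(\eta_i)\rangle\le 0$ for all atoms, and by density and continuity of $u_K$ on $\Sigma_1(K)$ (full measure, smooth case) this extends to all $y\in\partial K$, which since $S(K,y)^\circ=N(K,y)$ is exactly $-Cy-x\in S(K,y)$ for boundary $y$; extending to all $y\in K$ uses that $S(K,y)\supseteq S(K,y')$ is not quite right, so instead I would use that for interior $y$ the cone $S(K,y)=\R^d$ and the constraint is vacuous, while boundary $y$ is covered — hence $(x,C)\in T_K$, i.e. $(-x,-C)\in\check{T}_K$ reversed appropriately, giving ${\rm rec}(\fZ_K)=\check{T}_K$.

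Then \eqref{eq:rec_cone_claim_2} follows from the elementary fact that for a closed convex set $A$ and a closed convex cone $\fC$ with $A\cap\fC\ne\emptyset$ one has ${\rm rec}(A\cap\fC)={\rm rec}(A)\cap\fC$ (using that $\fC$ is its own recession cone); apply this with $A=\fZ_K$ and $\fC=\fC_{\HH}$, noting $(0,0)\in\fZ_K\cap\fC_{\HH}$. Finally, the compactness criterion: a closed convex set is bounded iff its recession cone is trivial, so $\check{\fZ}_K\cap\fC_{\HH}$ (equivalently $\fZ_K\cap\fC_{\HH}$) is a.s.\ compact iff ${\rm rec}(\fZ_K\cap\fC_{\HH})=\{(0,0)\}$, which by \eqref{eq:rec_cone_claim_2} is $\check{T}_K\cap\fC_{\HH}=\{(0,0)\}$; the a.s.\ statement requires noting that the event $\{{\rm rec}(\fZ_K\cap\fC_{\HH})=\{(0,0)\}\}$ is deterministic because ${\rm rec}(\fZ_K)=\check{T}_K$ is deterministic in the smooth case. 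The main obstacle I anticipate is the density/continuity argument in the smooth case: showing that the atoms $\eta_i$ of $\sP_K$ are a.s.\ dense in $\partial K$ and that the normal map $u_K$ behaves well enough on the support of $\Theta_{d-1}(K,\cdot)$ to pass from countably many half-space constraints to the full family indexed by $y\in K$ — this is where smoothness is genuinely used, and care is needed to handle the possibility that $\Sigma_1(K)$, though of full $\sH^{d-1}$-measure, might a priori miss a topologically large set; smoothness ($\dim N(K,y)=1$ for \emph{every} $y$) is precisely what rules this out.
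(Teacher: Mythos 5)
Your proposal is correct and follows essentially the same route as the paper: the same limit computation $n(\exp(-C/n)-\II)y\to -Cy$ combined with $n(K-y)\subseteq S(K,y)$ for $R_K\subseteq T_K$, the polarity $(N(K,y))^o=S(K,y)$ together with the half-space description of ${\rm rec}(\fZ_K)$ for \eqref{eq:rec_cone_claim_1}, a.s.\ density of the Poisson atoms in $\Nor(K)$ for the smooth converse, and the identity ${\rm rec}(A\cap\fC)={\rm rec}(A)\cap\fC$ for \eqref{eq:rec_cone_claim_2}. The only slip is the phrase ``$-Cy-x$ is eventually in the convex sets $n(K-y)$'': what one actually has is that the approximating points $n(\exp(-C/n)-\II)y-x$ lie in $n(K-y)\subseteq S(K,y)$ and converge to $-Cy-x$, so the limit belongs to the closed cone $S(K,y)$ --- which is exactly the paper's argument.
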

\begin{proof}
  It is clear that $R_K\subseteq \cap_{y\in K} R_{K,y}$, where 
  \begin{displaymath}
    R_{K,y}:=\bigcup_{k\geq 1}\bigcap_{n\geq k}
    \left\{(x,C)\in \RM: y\in \exp(C/n)(K+x/n)\right\}.
  \end{displaymath}
  A pair $(x,C)\in\RM$ lies in $R_{K,y}$ if and only if there exists a
  $k\in\NN$ such that $\exp(-C/n)y-x/n\in K$ for all $n\geq k$,
  equivalently, 
  \begin{displaymath}
    n(\exp(-C/n)-\II)y-x\in n(K-y)\quad\text{for all}\quad n\geq k.
  \end{displaymath}
  Letting $n\to\infty$ on the left-hand side and using that
  $\limsup_{n\to\infty}n(K-y)=S(K,y)$ show that $-Cy-x\in S(K,y)$.
  Thus,
  \begin{displaymath}
    R_{K,y}\subset \left\{(x,C)\in \RM: -Cy-x\in S(K,y)\right\}=:T_{K,y},
  \end{displaymath}
  so that $R_K\subseteq T_K$. Since $T_{K,y}$ is a closed convex cone
  for all $y\in K$, the set $T_K$ is a closed convex cone as well.

  In order to check \eqref{eq:rec_cone_claim_1} note that
  $(N(K,y))^{o}=S(K,y)$. Hence, $-Cy-x\in S(K,y)$ if and only if
  $\langle Cy+x,u\rangle\geq 0$ for all $u\in N(K,y)$. Therefore,
  \begin{align*}
    T_K&=\bigcap_{y\in K}\bigcap_{u\in N(K,y)}
         \left\{(x,C)\in \RM: \langle Cy+x,u\rangle\geq 0\right\}\\
       &=\bigcap_{(y,u)\in \Nor(K)}\left\{(x,C)\in \RM:
         \langle Cy+x,u\rangle\geq 0\right\},
  \end{align*}
  where we have used that $N(K,y)=\{0\}$ if $y\in \Int K$.

  It follows from well-known results on recession cones, see page~62 in
  \cite{Rockafellar:1972}, that
  \begin{align*}
    {\rm rec}(\fZ_{K})
    &=\bigcap_{(t,\eta,u)\in\sP_K}\Big\{(x,C)\in\RM:
      \langle (x,C),(u,\eta\otimes u)\rangle_1\leq 0\Big\}\\
    &=\bigcap_{(t,\eta,u)\in\sP_K}\Big\{(x,C)\in\RM:
      \langle C\eta+x,u\rangle\leq 0\Big\}.
  \end{align*}
  This immediately yields that $\check{T}_K\subseteq {\rm rec}(\fZ_{K})$. To see
  the converse inclusion for smooth $K$ note that the set
  \begin{displaymath}
    \{(\eta,u)\in \Nor(K):(t,\eta,u)\in \sP_K\text{ for some }t>0\}
  \end{displaymath}
  is a.s.~dense in $\Nor(K)=\{(x,u_K(x)):x\in\partial K\}$, where
  $u_K(x)$ is the unique unit outer normal to $K$ at $x$, see Lemma~4.2.2
  and Theorem~4.5.1 in \cite{schn2}. Thus, with probability one, for
  every $(x,C)\in {\rm rec}(\fZ_{K})$ and $(y,u)\in \Nor(K)$ there
  exists a sequence $(\eta_n,u_n)$ such that $(\eta_n,u_n) \to (y,u)$
  as $n\to\infty$, and $\langle C\eta_n+x,u_n\rangle\leq 0$ for all $n$. Thus,
  $\langle Cy+x,u\rangle\leq 0$ and $(x,C)\in \check{T}_K$.  Finally,
  relation \eqref{eq:rec_cone_claim_2} follows from Corollary~8.3.3 in
  \cite{Rockafellar:1972} since
  \begin{displaymath}
    {\rm rec}(\fZ_k\cap \fC_{\HH})
    ={\rm rec}(\fZ_k)\cap {\rm rec}(\fC_{\HH})
    ={\rm rec}(\fZ_k)\cap \fC_{\HH}. \qedhere
  \end{displaymath}
\end{proof}

Further information on the properties of $\fZ_K$ is encoded in its
polar set which takes the following rather simple form
\begin{displaymath}
  \fZ_K^o=\conv \Big(\bigcup_{(t,\eta,u)\in\sP_K} [0,t^{-1}(u,(\eta\otimes u))]\Big),
\end{displaymath}
which easily follows from \eqref{eq:half-space_polar}. Since $\fZ_K$
a.s.~contains the origin in the interior, $\fZ_K^o$ is
a.s. compact. Note that $\fZ_K^o$ is a subset of the Cartesian product
of $\R^d$ and Gruber's normal bundle cone, see
\cite{gruber2014normal}. The projection of $\fZ_K^o\cap(\R^d\times\{0\})$ on the first
factor $\R^d$ is a random polytope with probability one, which was
recently studied in \cite{MarMol:2021}, see Section~5.1 therein.

\subsection{Affine transformations of \texorpdfstring{$K$}{K}}

Let us now derive various properties of $\fZ_K$ with respect to
transformations of $K$. It is easy to see that
$\fZ_{rK}$ coincides in distribution with $r^{-1}\fZ_K$, for every
fixed $r>0$.  Let $A\in \ortho$ be a fixed orthogonal matrix. Note
that the point process $\sP_{AK}$ has the same distribution as the
image of $\sP_K$ under the map $(t,\eta,u)\mapsto(t,A\eta,Au)$. Then,
with $\od$ denoting equality of distributions,
\begin{align*}
  \fZ_{AK}&\od \bigcap_{(t,\eta,u)\in\sP_K}
    \Big\{(x,C)\in\RM:\langle C A\eta+x,Au\rangle \leq
    t\Big\}\\
   &=\bigcap_{(t,\eta,u)\in\sP_K}
    \Big\{(x,C)\in\RM:\langle A^\top C A\eta+ A^\top x,u\rangle \leq
    t\Big\}\\
    &=\bigcap_{(t,\eta,u)\in\sP_K}
    \Big\{(Ay,ABA^{\top})\in\RM:\langle B\eta+ y,u\rangle \leq
    t\Big\},
\end{align*}
so that $\fZ_{AK}$ has the same distribution as $\fZ_K$ transformed
using the map $\mathcal{O}_A:(x,C)\to (Ax,ACA^{\top})$, which is
orthogonal with respect to the inner product
$\langle\cdot,\cdot\rangle_1$ in $\RM$.  In particular, if $K$ is
invariant under $A$, then the distribution of $\fZ_K$ is invariant
under $\mathcal{O}_A$. Most importantly, if $K$ is a Euclidean ball,
then the distribution of $\fZ_K$ is invariant under $\mathcal{O}_A$
for any $A\in\ortho$.

If $K$ is translated by $v\in\R^d$, then
\begin{align*}
  \fZ_{K+v}&\od \bigcap_{(t,\eta,u)\in\sP_K}
  \Big\{(x,C)\in\RM:\langle C\eta,u\rangle
  +\langle Cv,u\rangle+\langle x,u\rangle\leq t\Big\}\\
  &=\bigcap_{(t,\eta,u)\in\sP_K}
  \Big\{(x-Cv,C)\in\RM:\langle C\eta,u\rangle
  +\langle x,u\rangle\leq t\Big\},
\end{align*}
meaning that $\fZ_{K+v}$ is the image of $\fZ_K$ under the linear
operator $(x,C)\mapsto (x-Cv,C)$.

\section{Examples}\label{sec:examples}

Throughout this section $\Xi_n$ is a sample from the uniform
distribution on $K$. If $\GG$ consists of the unit matrix and
$\TT=\R^d$, then Theorem~\ref{thm:main1} turns into Theorem~5.1 of
\cite{MarMol:2021}. Another object which has been recently treated in
the literature is given in Example~\ref{ex:cones} below.

Consider further examples involving nontrivial matrix groups.

\begin{example}[General linear group]
  Let $\GG=\gl$ be the general linear group, so that $\fG$ is the family
  $\Matr$. If $\TT=\R^d$, Proposition~\ref{prop:largest_group} shows that
  $Q_n=\conv(\Xi_n)$ for every choice of $K\in \sK^d_{(0)}$.

  Assume that $\TT=\{0\}$, and let $K$ be the unit ball $B_1$. Then
  $Q_n$ is strictly larger than $\conv(\Xi_n)$ with probability
  $1$. Indeed, it is clear that $Q_n\supseteq \conv(\Xi_n)$, and the
  inclusion is strict because the set $Q_n$ is symmetric with respect
  to the origin, while the set $\conv(\Xi_n)$ is almost surely
  not. Then $\fC_{\HH}=\{0\}\times\Matr$ and
  \begin{displaymath}
    \fZ_{B_1}\cap (\{0\}\times\Matr) =\{0\}\times \bigcap_{(t,u)\in\sP}
    \big\{C\in\Matr :  \langle Cu,u\rangle\leq t\big\}, 
  \end{displaymath}
  where $\sP$ is the Poisson process on $\R_+\times\Sphere$ with
  intensity being the product of the Lebesgue measure multiplied by
  $d$ and the uniform probability measure on $\Sphere$. The factor $d$
  results from taking the ratio of the surface area of the unit sphere
  and the volume of the unit ball.
  
  By Proposition \ref{prop:rec_cone}, since
  $S(B_1,y)=H^{-}_y(0)=\{x\in\R^d: \langle x,y\rangle\leq 0\}$
  \begin{displaymath}
    {\rm rec}(\fZ_{B_1}\cap (\{0\}\times\Matr))
    =\{0\}\times \bigcap_{y\in B_1}\big\{C\in\Matr : \langle Cy,y\rangle
    \leq 0 \big\}.
  \end{displaymath}
  Thus,
  \begin{displaymath}
    {\rm rec}\big (\check{\fZ}_{B_1}\cap (\{0\}\times\Matr) \big)
    =\{0\}\times \{C\in\Matr : C\text{ is positive semi-definite}\}.
  \end{displaymath}
  In particular, the second factor contains the subspace $\Matr[SSym]$
  of all skew-symmetric matrices, as well as all real symmetric
  matrices with nonnegative eigenvalues. The former reflects the fact
  that $B_1$ is invariant with respect to $\ortho$ for which
  $\Matr[SSym]$ is the Lie algebra. The latter is a consequence of the
  fact that $\XX_{B_1,\gl}(\Xi_n)$ contains all scalings with scaling
  factors (possibly different along pairwise orthogonal directions)
  larger than or equal to $1$.
\end{example}

\begin{example}[Special linear group]
  Let $\GG$ be the special linear group $\mathbb{SL}_d$, which
  consists of all $d\times d$ real-valued matrices with determinant
  one, and assume again that $\TT=\{0\}$. The elements of the
  corresponding Lie algebra $\fG= \{C\in \Matr: \Tr C = 0\}$ are
  matrices with zero trace. Thus, we can set
  $\fC_{\HH}=\{0\}\times\fG$. If $K=B_1$, then
  \begin{displaymath}
    \fZ_{B_1}\cap (\{0\}\times\fG)=\{0\}\times \bigcap_{(t,u)\in\sP}
    \big\{C\in \fG:\langle C u,u\rangle \leq t\big\}.
  \end{displaymath}
  By Proposition~\ref{prop:rec_cone}
  \begin{align*}
    \mathfrak{R}:={\rm rec}\big (\fZ_{B_1}\cap (\{0\}\times\fG) \big)
    &=\{0\}\times \bigcap_{y\in B_1}\big\{C\in \fG:\langle Cy,y\rangle\leq
      0 \big\}\\
    &=\{0\}\times \bigcap_{y\in B_1}\big\{C\in \Matr:
      \Tr C=0, \langle Cy,y\rangle\leq 0 \big\}.
  \end{align*}
  The intersection of $\mathfrak{R}$ and $\check{\mathfrak{R}}$ is
  called the lineality space of $\fZ_{B_1}\cap (\{0\}\times\fG)$; it
  consists of all vectors that are parallel to a line contained in
  $\fZ_{B_1}\cap (\{0\}\times\fG)$, see page~16 in~\cite{schn2}. Clearly,
  the lineality space of $\fZ_{B_1}\cap (\{0\}\times\fG)$ is
  a.s. equal to
  \begin{displaymath}
    \mathfrak{R}\cap \check{\mathfrak{R}}
    =\{0\}\times \bigcap_{y\in B_1}\big\{C\in \Matr: \Tr C=0,
    \langle Cy,y\rangle=0\big\}=\{0\}\times \Matr[SSym].
  \end{displaymath}
  The vector space of square matrices $\Matr$ is the direct sum of the
  vector spaces of symmetric and skew-symmetric matrices:
  \begin{displaymath}
    \Matr=\Matr[Sym] \oplus \Matr[SSym].
  \end{displaymath}
  Furthermore, with respect to the inner product
  $\langle A, B \rangle := \Tr (A B^{\top})$ this direct sum
  decomposition is orthogonal. Similarly, the space $\fG$ is a direct
  sum of two vector spaces $\Matr[SSym]$ and $\fG_+$, where
  $\fG_+ := \{C\in \Matr[Sym]: \Tr C = 0\}$. By Lemma~1.4.2 in
  \cite{schn2} we a.s.~have the orthogonal decomposition
  \begin{displaymath}
    \fZ_{B_1}\cap (\{0\}\times\fG)
    =\{0\}\times\left(\Matr[SSym] \oplus
      \big(\fZ_{B_1}\cap (\{0\}\times\fG)\big)_{+}\right),
  \end{displaymath}
  where 
  \begin{align*}
    \big(\fZ_{B_1}\cap (\{0\}\times\fG) \big)_{+}
    := \bigcap_{(t,u)\in\sP}
    \big\{C\in \Matr[Sym] : \Tr C=0, \langle Cu,u\rangle\leq t \big\}.
  \end{align*}
  
  If a matrix $C\in \fG_+$ does not vanish, then at least one of its
  eigenvalues is strictly positive (because all eigenvalues are
  real by symmetry and their sum is $0$). If we denote by $v$ the
  corresponding unit eigenvector, then $\langle C v, v\rangle
  >0$. Since the set of $u_i$'s for which $(t_i,u_i)\in\sP$ is a.s.\
  dense on the unit sphere in $\R^d$, it follows that
  $\langle C u_i, u_i\rangle >0$ for some $i$. Thus,
  $sC\notin \big(\fZ_{B_1}\cap (\{0\}\times\fG) \big)_{+}$ if $s>0$ is
  sufficiently large. Therefore, the convex set
  $\big(\fZ_{B_1}\cap (\{0\}\times\fG) \big)_{+}$ is a.s.~bounded, hence, is a
  compact subset of $\Matr[Sym]$.
  
  As in the previous example, the unbounded component
  $\{0\}\times \Matr[SSym]$ is present in
  $\fZ_{B_1}\cap (\{0\}\times\fG)$ due to the fact that $B_1$ is
  invariant with respect to the orthogonal group $\ortho$ which is a
  Lie subgroup of $\mathbb{SL}_d$. Since arbitrarily large scalings
  are not allowed in $\mathbb{SL}_d$, the random closed set
  $\fZ_{B_1}\cap (\{0\}\times\fG)$ is a.s.~bounded on the complement
  to $\{0\}\times \Matr[SSym]$.
\end{example}

\begin{example}
  Let $\GG=\ortho$ be the orthogonal group. As has already been
  mentioned, the corresponding Lie algebra $\fG=\Matr[SSym]$ is the
  $d(d-1)/2$-dimensional subspace of $\Matr$, consisting of all skew
  symmetric matrices. If $\TT=\R^d$, then $\fC_{\HH}=\R^d\times \fG$
  and
  \begin{displaymath}
    \fZ_{K}\cap (\R^d\times \fG)=\bigcap_{(t,\eta,u)\in\sP_K}
    \Big\{(x,C)\in\R^d\times \Matr[SSym]:
    \langle C\eta,u\rangle +\langle x,u\rangle \leq t \Big\}.
  \end{displaymath}
  In the special case $d=2$ the Lie algebra $\fG$ is one-dimensional
  and is represented by the matrices
  \begin{displaymath}
    C=
    \begin{pmatrix}
      0 & c\\
      -c & 0
    \end{pmatrix}, \quad c\in\R.
  \end{displaymath}
  Write $\eta:=(\eta',\eta'')$ and $u:=(u',u'')$. Then, with $\cong$
  denoting the isomorphism of
  $\R^2\times \mathsf{M}_2^{\mathrm{SSym}}$ and $\R^2\times \R$, we
  can write
  \begin{align}
    \fZ_{K}\cap \big(\R^2\times \mathsf{M}_2^{\mathrm{SSym}}\big)
    &\cong \bigcap_{(t,\eta,u)\in\sP_K}
      \Big\{(x,c)\in \R^2\times\R: c(\eta''u'-\eta'u'')
      +\langle x,u\rangle \leq t\Big\}\notag\\
    &=\bigcap_{(t,\eta,u)\in\sP_K}
      \Big\{(x,c)\in \R^2\times\R: \langle x,u\rangle \leq
      t-c[u,\eta]\Big\}
      \label{eq:revision_label1},
  \end{align}
  where $[u,\eta]$ is the (signed) area of the parallelogram spanned
  by $u$ and $\eta$. Therefore,
  $\fZ_{K}\cap \big (\R^2\times \mathsf{M}_2^{\mathrm{SSym}}\big)$ is
  (isomorphic to) the zero cell of a hyperplane tessellation
  $H_{(u,[u,\eta])}(t)$, $(t,\eta,u)\in\sP_K$ in $\R^2\times\R$.

  Let $K=[-1,1]^2$ be a square in $\R^2$. Let $e_1,e_2$ be the
  standard basis of $\R^2$, and let $u_1=e_1,u_2=e_2,u_3=-e_1,u_4=-e_2$
  be the unit normal vectors to the sides of $K$.  Then
    $\sP_K=\{(t_i,\eta_i,u_K(\eta_i)),i\geq1\}$, where $\{t_i,i\geq1\}$
    is a homogeneous Poisson process on $\R_+$ of intensity $2$ (which
    is the ratio of the perimeter of $K$ and its area), and
    $(\eta_i,u_K(\eta_i))$ are i.i.d.\ pairs composed of $\eta_i$
    uniformly distributed on $\partial K$ and $u_K(\eta_i)$ being the unit
    outer normal to $K$ at $\eta_i$. Let $\sP_1,\dots,\sP_4$ be 
    independent 
    Poisson processes on $\R_+\times[-1,1]$ obtained by letting $\sP_j$ consist of points
    $(t_i,\bar\eta_i)$, $i\geq1$, such that
    $(t_i,\eta_i,u_K(\eta_i))\in\sP_K$ with $u_K(\eta_i)=u_j$, and
    $\bar\eta_i$ is the random component of $\eta_i$, e.g., 
    $\eta_i=(1,\bar\eta_i)$ if $u_K(\eta_i)=u_1$. Note that the
    intensity of $\sP_j$ is $1/2$. In view of the symmetry of
    $\bar\eta_i$, \eqref{eq:revision_label1} implies
    \begin{displaymath}
      \fZ_{K}\cap (\R^2\times \mathsf{M}_2^{\mathrm{SSym}})
      \cong\bigcap_{j=1}^4 \bigcap_{(t,y)\in\sP_j}
      \Big\{(x,c)\in \R^2\times\R:cy+\langle x,u_j\rangle \leq t\Big\}.
    \end{displaymath}


  In the special case $\TT=\{0\}$ we can set $x=0$, so that
  \begin{displaymath}
    \fZ_{K}\cap (\{0\} \times \mathsf{M}_2^{\mathrm{SSym}})
    \cong\{0\}\times \bigcap_{j=1}^4 \bigcap_{(t,y)\in\sP_j}
    \big\{c\in\R:c y\leq t \big\}.
  \end{displaymath}
  An easy calculation shows that the
  double intersection above is a segment $[-\zeta',\zeta'']$, where
  $\zeta'$ and $\zeta''$ are two independent exponentially distributed random
  variables of mean one.
\end{example}

\begin{example}[Scaling by constants]
  Let $\HH$ be the product of $\TT=\R^d$ and the family
  $\GG=\{e^{r}\II:r\in\R\}$ of scaling transformations, so that
  $\fC_{\HH}=\R^d\times \{r\II:r\in\R\}$. Then, with $\cong$ denoting
  the natural isomorphism between $\fC_{\HH}$ and $\R^d\times \R$,
  \begin{align*}
    \fZ_{K}\cap \fC_{\HH}
    &\cong\bigcap_{(t,\eta,u)\in\sP_K}
    \Big\{(x,r)\in\R^{d}\times\R:
    r\langle \eta,u\rangle + \langle x,u\rangle \leq t\Big\}\\
    &=\bigcap_{(t,\eta,u)\in\sP_K}
    \Big\{(x,r)\in\R^{d}\times\R: rh(K,u) + \langle x,u\rangle \leq t\Big\}\\
    &=\bigcap_{(t,\eta,u)\in\sP_K}
    \Big\{(x,r)\in\R^{d}\times\R: h(rK+x,t^{-1}u)\leq 1\Big\}\\    
    &=\bigcap_{(t,\eta,u)\in\sP_K}
      \Big\{(x,r)\in\R^{d}\times\R: t^{-1}u\in (rK+x)^{o}\Big\}\\
    &=\Big\{(x,r)\in\R^{d}\times\R:
      \conv (\{t^{-1}u: (t,\eta,u)\in\sP_K\})\subseteq (rK+x)^{o}\Big\}.
  \end{align*}
  The set $\conv (\{t^{-1}u: (t,\eta,u)\in\sP_K\})$ has been studied
  in \cite{MarMol:2021}, in particular, the polar set to this hull is
  the zero cell $Z_K$ of the Poisson hyperplane tessellation in $\R^d$,
  whose intensity measure is the product of the Lebesgue measure
  (scaled by $V_d^{-1}(K)$) and the surface area measure
  $S_{d-1}(K,\cdot)=\Theta_{d-1}(K,\R^d\times\cdot)$ of $K$. Thus, we
  can write
  \begin{equation}\label{eq:scalings_translatins}
    \fZ_{K}\cap \fC_{\HH}\cong \{(x,r)\in\R^{d}\times\R:
    rK+x\subseteq Z_K\}.
  \end{equation}

  If $K$ is the unit Euclidean ball $B_1$, then
  \eqref{eq:scalings_translatins} can be recast as
  \begin{displaymath}
    \fZ_{B_1}\cap \fC_{\HH}\cong
    \big\{(x,r)\in \R^d\times\R: x+rB_1\subset Z \big\},
  \end{displaymath}
  where $Z=\bigcap_{i\geq1} H^-_{u_i}(t_i)$ is the zero cell
  generated by the stationary Poisson hyperplane process
  $\{H_{u_i}(t_i),i\geq1\}$ in $\R^d$. For every $r_0\geq 0$, the
  section of $\fZ_{B_1}\cap \fC_{\HH}$ by the hyperplane $\{r=r_0\}$
  is the set $\{x\in\R^d: x+B_{r_0} \subset Z\}$. If $r_0<0$, then the
  section by $\{r=r_0\}$ is the Minkowski sum $Z+B_{-r_0}$.
\end{example}

\begin{example}[Diagonal matrices]
  Let $\GG$ be the group of diagonal matrices with positive entries
  given by $\diag(e^{z_1},\dots,e^{z_d})$ for
  $z=(z_1,\dots,z_d)\in\R^d$.
  If $\TT=\{0\}$, then $\fC_{\HH} \cong \{0\}\times\R^d$ and 
  \begin{align*}
    \fZ_{K}\cap\fC_{\HH}
    &=\{0\}\times\bigcap_{(t,\eta,u)\in\sP_K} \big\{z\in \R^{d}:
         \langle \diag(z)\eta,u\rangle \leq t\big\}\\
    &=\{0\}\times \bigcap_{(t,\eta,u)\in\sP_K} \big\{z\in \R^{d}:
         \langle z,(\diag(\eta)u)\rangle \leq t\big\},
  \end{align*}
  where $\diag(\eta) u$ is the vector given by componentwise products
  of $\eta$ and $u$. Thus, the above intersection is the zero cell of
  a Poisson tessellation in $\R^d$ whose directional measure is
  obtained as the pushforward of $\Theta_{d-1}(K,\cdot)$ under the
  map $(x,u)\mapsto \diag(x)u$. If $K$ is the unit ball, then this
  directional measure is the pushforward of the uniform distribution
  on the unit sphere under the map which transforms $x$ to the vector
  composed of the squares of its components. By
  Proposition~\ref{prop:rec_cone}
  \begin{displaymath}
    {\rm rec}(\fZ_{B_1}\cap\fC_{\{0\}\times \GG})
    \cong \{0\}\times \bigcap_{u\in B_1}
    \big\{z\in\R^d:\langle z,\diag(u)u\rangle\leq 0 \big\}
    =\{0\}\times (-\infty,0]^d.
  \end{displaymath}
\end{example}

\begin{example}[Random cones and spherical polytopes]\label{ex:cones}
  Assume that $K$ is the closed unit upper half-ball $B_1^{+}$ defined
  at \eqref{eq:upper-ball}, $\TT=\{0\}$ and $\GG=\mathbb{SO}_d$, so
  that $\fC_{\HH}=\{0\}\times \Matr[SSym]$. If $\Xi_n$ is a sample
  from the uniform distribution in $K$, then $\pi(\Xi_n)$ is a sample
  from the uniform distribution on the half-sphere
  $\Sphereplus$, where $\pi(x)=x/\|x\|$ for $x\neq
  0$. Indeed, for a Borel set $A\subset \SS^{d-1}_{+}$,
  \begin{displaymath}
    \P\{\pi(\xi_1)\in A\}=\P\{\xi_1/\|\xi_1\|\in A\}
    =\P\{\xi_1\in {\rm pos}(A)\}
    =\frac{V_d({\rm pos}(A)\cap B_1^{+})}{V_d(B_1^{+})}
    =\frac{\mathcal{H}_{d-1}(A)}{\mathcal{H}_{d-1}(\SS^{d-1}_{+})},
  \end{displaymath}
  where $\mathcal{H}_{d-1}$ is the $(d-1)$-dimensional Hausdorff
  measure. According to \eqref{eq:cones2},
  \begin{displaymath}
    Q_n\cap \Sphere=\conv_{K,\HH}(\Xi_n)\cap\Sphere
    = {\rm pos}(\Xi_n) \cap \Sphere ={\rm pos}(\pi(\Xi_n))\cap \Sphere
  \end{displaymath}
  is a closed random spherical polytope obtained as the spherical hull
  of $n$ independent points, uniformly distributed on
  $\SS^{d-1}_{+}$. This object has been intensively studied in
  \cite{kab:mar:tem:19}. Let $T_n:\R^d\to\R^d$ be a linear mapping
  \begin{displaymath}
    T_n(x_1,x_2,\ldots,x_d)=(n x_1,x_2,\ldots,x_d),\quad n\in\NN.
  \end{displaymath}
  Theorem~2.1 in \cite{kab:mar:tem:19} implies that the sequence of
  random closed cones $(T_n({\rm pos}(\Xi_n)))_{n\in\NN}$ converges in
  distribution in the space of closed subsets of $\R^d$ endowed with
  the Fell topology to a closed random cone whose intersection with
  affine hyperplane $\{(x_1,x_2,\dots,x_d)\in\R^d:x_1=1\}$ is the convex set
  $(\conv(\widetilde{\sP}),1)$, where $\widetilde{\sP}$ is a Poisson
  point process on $\R^{d-1}$ with the intensity measure
  \begin{equation}
    \label{eq:poisson_intensity_sphere_d-1}
    x\mapsto c_d\|x\|^{-d},\quad x\in \R^{d-1}\setminus \{0\},
  \end{equation}
  and an explicit positive constant $c_d$. The following arguments
  show that it is possible to establish an isomorphism between the
  positive dual cone
  $\{x\in\R^d:\langle x,\xi_k\rangle\geq 0,k=1,\dots,n\}$ to the cone
  ${\rm pos}(\Xi_n)$ and the set $\XX_{K,\HH}(\Xi_n)$ defined at
  \eqref{eq:4}, so that our limit theorem yields the limit for this
  normalised dual cone.
  
  Denote by $e_1,\dots,e_d$ standard basis vectors. Since
  $\langle C\eta,u\rangle=0$ for all $(t,\eta,u)\in\sP_K$ with
  $u\in \SS^{d-1}_{+}$ and $C\in\Matr[SSym]$, we need only to consider
  $(t,\eta,u)\in\sP_K$ such that $u=-e_1$, meaning that $\eta$ lies on
  the flat boundary part of $B_1^+$, so that
  \begin{align*}
    \fZ_{B_1^{+}}\cap(\{0\}\times \Matr[SSym])
    &=\{0\}\times \bigcap_{(t,\eta,u)\in\sP_{B_1^{+}}}
    \Big\{C\in\Matr[SSym]: \langle C\eta,u\rangle \leq t\Big\}\\
    &=\{0\}\times\bigcap_{(t,\eta,-e_1)\in\sP_{B_1^{+}}}
    \Big\{C\in\Matr[SSym]: \langle C\eta,-e_1\rangle \leq t\Big\}\\
    &=\{0\}\times\bigcap_{(t,\eta,-e_1)\in\sP_{B_1^{+}}}
    \Big\{C\in\Matr[SSym]: \langle \eta,Ce_1\rangle \leq t\Big\}.
  \end{align*}
  Note that every skew-symmetric matrix can be uniquely decomposed
  into a sum of a skew-sym\-metric matrix with zeros in the first row
  and the first column and a skew symmetric matrix with zeros
  everywhere except the first row and the first column. This corresponds
  to the direct sum decomposition of the space of skew symmetric
  matrices $\Matr[SSym]:=V_1\oplus V_2$, where
  $V_1\cong \mathsf{M}^{\mathrm{SSym}}_{d-1}$. For every
  $(t,\eta,-e_1)\in\sP_{B_1^{+}}$ and $C\in V_1$, we obviously have
  $\langle \eta,Ce_1\rangle=0$. Thus,
  \begin{displaymath}
    \fZ_{B_1^{+}}\cap \big(\{0\}\times \Matr[SSym]\big)
    \cong \{0\}\times\Big(\mathsf{M}^{\mathrm{SSym}}_{d-1}
      \oplus \bigcap_{(t,\eta,-e_1)\in\sP_{B_1^{+}}}
      \big\{C\in V_2: \langle \eta,Ce_1\rangle \leq t\big\}\Big).
  \end{displaymath}
  The fact that $\fZ_{B_1^{+}}\cap(\{0\}\times \Matr[SSym])$ contains
  the subspace $V_1$ has the following interpretation. It is known
  that the exponential map from $\Matr[SSym]$ to $\mathbb{SO}_d$ is
  surjective, that is, every orthogonal matrix with determinant one
  can be represented as the exponent of a skew-symmetric matrix, see
  Corollary~11.10 in \cite{Hall:2003}. The image $\exp(V_1)$ is
  precisely the set of orthogonal matrices with determinant one and
  for which $e_1$ is a fixed point. This set is a subgroup of
  $\mathbb{SO}_d$ which is isomorphic to $\mathbb{SO}_{d-1}$, and
  $B_1^{+}$ is invariant with respect to all transformations from
  $\exp(V_1)$. The set $\exp(V_2)$ is not a subgroup of
  $\mathbb{SO}_d$ but is a smooth manifold of dimension $d-1$. Note
  that the above construction is the particular case of the well-known
  general concept of quotient manifolds in Lie groups, see Chapter
  11.4 in \cite{Hall:2003}.
  
  There is a natural isomorphism $\phi:\{0\}\times V_2\to \R^{d-1}$
  which sends $(0,C)\in \{0\}\times V_2$ to the vector
  $\phi(C)\in\R^{d-1}$ which is the first column of $C$ with the first
  component (it is always zero) deleted. Moreover, if
  $(t,\eta,-e_1)\in\sP_{B_1^{+}}$, then $\eta$ is necessarily of the
  form $\eta=(0,\eta')$, where $\eta'\in B'_1$ and $B'_1$ is a
  $(d-1)$-dimensional centred unit ball. It can be checked that the Poisson
  process
  $\{(t^{-1}\eta')\in \R^{d-1}\setminus\{0\} :
  (t,\eta,-e_1)\in\sP_{B_1^{+}}\}$ has
  intensity~\eqref{eq:poisson_intensity_sphere_d-1}.  Summarising,
  \begin{displaymath}
    \phi \big(\fZ_{B_1^{+}}\cap(\{0\}\times V_2) \big)
    =\bigcap_{(t,\eta,-e_1)\in\sP_{B_1^{+}}}\big\{x\in \R^{d-1}:
    \langle \eta',x\rangle \leq t\big\}=:\widetilde{Z}_0,
  \end{displaymath}
  is the zero cell of the Poisson hyperplane tessellation
  $\{H^{-}_{\eta'}(t):(t,\eta,-e_1)\in\sP_{B_1^{+}}\}$ of
  $\R^{d-1}$. Remarkably, the polar set to $\widetilde{Z}_0$ is the
  convex hull of $\widetilde{\sP}$. 
\end{example}

\section{Appendix}
\label{sec:appendix}

The subsequent presentation concerns random sets in Euclidean space
$\R^d$ of generic dimension $d$. These results are applied in the main
part of this paper to random sets of affine transformations, which are
subsets of the space $\R^d\times\Matr$. This latter space can be
considered an Euclidean space of dimension $d+d^2$.

Let $\sF^d$ be the family of closed sets in $\R^d$. Denote by $\sC^d$
the family of nonempty compact sets and by $\sK^d$ the family of
nonempty compact convex sets. The family of compact convex sets
containing the origin is denoted by $\sK^d_0$, while $\sK^d_{(0)}$ is
the family of compact convex sets which contain the origin in their
interiors. Each set from $\sK^d_{(0)}$ is a convex body (a compact
convex set with nonempty interior).

The family $\sF^d$ is endowed with the Fell topology, whose base
consists of finite intersections of the sets
$\{F:F\cap G\neq\emptyset\}$ and $\{F:F\cap L=\emptyset\}$ for all
open $G$ and compact $L$. The definition of the Fell topology and its
basic properties can be found in Section~12.2 of \cite{sch:weil08} or
Appendix~C in \cite{mol15}. Note that $F_n\to F$ in the
Fell topology (this will be denoted by $F_n\toFn F$) if and only if
$F_n$ converges to $F$ in the Painlev\'e--Kuratowski sense, that is,
$\limsup F_n=\liminf F_n=F$. Recall that $\limsup F_n$ is the set of
all limits of convergent subsequences $x_{n_k}\in F_{n_k}$, $k\geq1$,
and $\liminf F_n$ is the set of limits of convergent sequences
$x_n\in F_n$, $n\geq1$.
The family $\sC^d$ is endowed with the topology generated by the
Hausdorff metric which we denote by $d_{\rm H}$.

It is easy to see that the convergence $(F_n\cap L)\toFn (F\cap L)$
as $n\to\infty$ for each compact set $L$ implies the Fell convergence
$F_n\toFn F$. The inverse implication is false in general, since the
intersection operation is not continuous, see Theorem~12.2.6
in~\cite{sch:weil08}.  The following result establishes a kind of
continuity property for the intersection map. A closed set $F$ is said
to be {\it regular closed} if it coincides with the closure of its
interior. The empty set is also considered regular closed.  A nonempty
closed convex set is regular closed if and only if its interior is not
empty.

\begin{lemma}
  \label{lemma:int-det}
  Let $(F_n)_{n\in\NN}$ and $F$ be closed sets such that $F_n\toFn F$
  as $n\to\infty$, and let $L$ be a closed set in $\R^d$. Assume that one
  of the following conditions hold:
  \begin{enumerate}[(i)]
  \item $F\cap L$ is regular closed; 
  \item the  sets $F$ and $L$ are convex,
    $0\in\Int F$ and $0\in L$.
  \end{enumerate}
  Then $(F_n\cap L)\toFn (F\cap L)$ as $n\to\infty$.
\end{lemma}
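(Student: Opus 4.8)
The plan is to use the Painlev\'e--Kuratowski characterisation of Fell convergence throughout: we must show $\limsup(F_n\cap L)\subseteq F\cap L$ and $F\cap L\subseteq\liminf(F_n\cap L)$. The first (upper) inclusion is free and requires no hypotheses: if $x_{n_k}\in F_{n_k}\cap L$ and $x_{n_k}\to x$, then $x\in\limsup F_{n_k}\subseteq\limsup F_n=F$ because $F_n\toFn F$, and $x\in L$ because $L$ is closed; hence $x\in F\cap L$. So the entire content of the lemma lies in the lower inclusion $F\cap L\subseteq\liminf(F_n\cap L)$, i.e.\ producing, for each $x\in F\cap L$, points $x_n\in F_n\cap L$ with $x_n\to x$.

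First I would dispose of the case where $F\cap L$ has empty interior: under hypothesis~(i) this forces $F\cap L=\emptyset$ (regular closed with empty interior means it equals the closure of the empty set), and then there is nothing to prove for the lower inclusion. So under~(i) we may assume $\Int(F\cap L)\neq\emptyset$. The core step is then the same in both cases: \emph{pick a point $p$ lying in the interior of $F\cap L$}. Under~(i) such a $p$ exists by assumption; under~(ii), since $0\in\Int F$ and $0\in L$ with $F,L$ convex, the point $0$ lies in $F\cap L$ and in fact one checks it is an \emph{interior} point relative to... no --- here the subtlety is that $0$ need only be in $L$, not in $\Int L$, so $F\cap L$ may be lower-dimensional. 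The right move under~(ii) is different: one shows that for $x\in F\cap L$ the segment $[0,x)$ lies in $\Int F$ (convexity plus $0\in\Int F$), hence $[0,x)\subseteq\Int F\cap L$, and one approximates $x$ along this segment. Concretely, fix $x\in F\cap L$ and $\eps>0$; set $x':=(1-\delta)x$ for small $\delta>0$ so that $\|x'-x\|<\eps/2$ and $x'\in\Int F$ (possible since $[0,x)\subseteq\Int F$), and also $x'\in L$ by convexity of $L$ (as $x'\in[0,x]$ with $0,x\in L$). Since $x'\in\Int F$ and $F_n\toFn F$, Fell convergence gives $x'\in\liminf F_n$ --- more precisely, an open neighbourhood $U$ of $x'$ with $U\cap F_n\neq\emptyset$ for all large $n$ --- so there exist $y_n\in F_n$ with $y_n\to x'$. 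These $y_n$ need not lie in $L$; but since $x'\in\Int F$ we may shrink $U\subseteq\Int F$, and... the cleanest fix is to note $y_n\to x'\in\Int F$ so for large $n$ the whole small ball around $y_n$ is irrelevant --- rather, project: let me instead take $z_n$ on the segment $[0,y_n]$... Actually the slick argument: since $x'\in\Int F$, choose $r>0$ with $B(x',r)\subseteq F$; for large $n$, $d_{\mathrm H}(F_n\cap B(x',2r),F\cap B(x',2r))$ is small (this is where Lemma-type local Hausdorff control comes in, or one argues directly from Painlev\'e--Kuratowski that $F\cap B(x',r)\subseteq\liminf(F_n\cap B(x',r))$ because $F\cap B(x',r)=B(x',r)$ is regular closed), so there are $w_n\in F_n$ with $w_n\to x'$; finally replace $w_n$ by its nearest point in $L$... no. The honest statement is that under~(ii), since $x'\in\Int F$ one has $x'\in\Int F\cap L$ where $L$ is convex containing $0$, and $\Int F\cap L\subseteq F\cap L$, and the lower inclusion for the pair $(\Int F, L)$ is easier because $\Int F$ is open.

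The \textbf{main obstacle}, and the place to be careful, is exactly this: Fell convergence $F_n\toFn F$ tells us nothing a priori about how $F_n$ meets $L$, since intersection is discontinuous. The whole point of the regularity/convexity hypotheses is to guarantee that $F\cap L$ is "approximable from inside" --- every point of $F\cap L$ is a limit of points in $\Int F\cap\Int(F\cap L)$-ish interior regions where Fell convergence \emph{does} transfer to the intersection. So the decisive lemma-within-the-proof is: \emph{if $x'\in\Int F$ and $x'\in L$, then $x'\in\liminf(F_n\cap L)$}. I would prove this by taking a ball $B(x',r)\subseteq F$; then $F_n\supseteq$ (something converging to $B(x',r)$) for large $n$ by Fell convergence restricted to the compact $\overline{B(x',2r)}$ (here $F\cap\overline{B(x',2r)}$ is regular closed, being a superset of $\overline{B(x',r)}$, so the restricted Fell convergence holds with the intersection --- this is the regular-closed case feeding itself, or one cites that on a region where $F$ has nonempty interior the convergence is "thick"). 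Concretely $x'\in\liminf F_n$ gives $x'_n\in F_n$, $x'_n\to x'$; since $B(x',r)\subseteq F$ and $x'_n\to x'$, for large $n$ we can find $x'_n\in F_n$ arbitrarily close to $x'$, and because $L$ is... still not automatically containing $x'_n$. The resolution under~(ii): replace $x'$ by $(1-\delta')x'$ again if needed, iterating; or, cleanest, observe that under~(ii) the set $F\cap L$ satisfies $\Int_{\mathrm{rel}}$... I will structure the final write-up around the single reduction "$F\cap L\subseteq\cl(\Int F\cap L)$", valid under both hypotheses (under~(i) because $F\cap L$ regular closed and $\Int(F\cap L)\subseteq\Int F\cap L$; under~(ii) by the segment argument $[0,x)\subseteq\Int F$), followed by the assertion that $\Int F\cap L\subseteq\liminf(F_n\cap L)$, which holds because each $x'\in\Int F\cap L$ has a ball $B(x',r)\subseteq F$, whence $B(x',r)\cap L\subseteq F\cap L$ is "solid near $x'$" and the Painlev\'e--Kuratowski lower limit captures it: for any $\rho\in(0,r)$, $\overline{B(x',\rho)}\subseteq\Int F$ so $\overline{B(x',\rho)}\subseteq\liminf F_n$, giving $z_n\in F_n\cap\overline{B(x',\rho)}$ for all large $n$ with, say, $z_n=x'$ up to $\rho$; but $z_n\in L$ is still the gap --- so in fact the correct final reduction must be to $\cl(\Int F\cap\Int L)$ under~(i) (legitimate since $F\cap L$ regular closed has $\Int(F\cap L)=\Int F\cap\Int L$ dense in it) and to a direct segment approximation under~(ii), treating the two hypotheses genuinely separately in the last step, with~(ii) handled by: $x\in F\cap L\Rightarrow [0,x)\subseteq\Int F$ and $[0,x)\subseteq L$; for $x'=(1-\delta)x\in\Int F$, pick $r$ with $B(x',r)\subseteq F$; since $F_n\toFn F$, there are $y_n\in F_n\cap B(x',r/2)$ eventually; then $y_n\in F$... that's wrong too, $y_n\in F_n$. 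I accept that the delicate bookkeeping of "which points land in $L$" is the crux and will present it carefully in the full proof; the \emph{strategy} is fixed: upper inclusion free; lower inclusion via approximating points of $F\cap L$ by interior points where Fell convergence transfers through the intersection, with the regular-closed (resp.\ convex-with-interior-origin) hypothesis supplying that interior approximation.
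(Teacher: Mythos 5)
Your upper inclusion and your treatment of case (i) are correct and coincide with the paper's argument: under (i) one writes $F\cap L=\cl(\Int(F\cap L))$, notes that every $x\in\Int(F\cap L)$ lies in $\Int L$, so the approximants $x_n\in F_n$ with $x_n\to x$ supplied by $F=\liminf F_n$ eventually fall into $L$, and concludes because $\liminf(F_n\cap L)$ is closed. Your reduction in case (ii) --- $F\cap L=\cl\bigl((\Int F)\cap L\bigr)$ via the segment $[0,x)\subset(\Int F)\cap L$, so that it suffices to prove $(\Int F)\cap L\subset\liminf(F_n\cap L)$ --- is also exactly the paper's first step.

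But the step you explicitly leave open, namely producing points of $F_n\cap L$ converging to a given $x'\in(\Int F)\cap L$, is the entire content of case (ii), and none of the fixes you sketch (projecting $y_n$ onto $L$, sliding along $[0,y_n]$, shrinking neighbourhoods) can succeed: when $L$ is lower-dimensional, no point of $F_n$ other than $x'$ itself need lie in $L$, so one must show that \emph{$x'$ itself belongs to $F_n$ for all large $n$}. The paper does this as follows: choose $\eps>0$ and $R>0$ with $x'+B_\eps\subset F\cap B_R$; since $F\cap B_R$ is convex with $0$ in its interior, it is regular closed, so part (i) gives $F_n\cap B_R\toFn F\cap B_R$, and uniform boundedness upgrades this to $F_n\cap B_R\toHn F\cap B_R$; hence $F\cap B_R\subset(F_n\cap B_R)+B_{\eps/2}$ eventually, and convexity of $F_n\cap B_R$ then yields $x'+B_{\eps/2}\subset F_n$, whence $x'\in F_n\cap L$. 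Note that this last erosion step genuinely uses convexity of the approximating sets: for general closed $F_n$ the statement of (ii) is false (take $F=[-1,1]^2$, $L=\{0\}\times[-1,1]$ and $F_n=F\setminus\bigl((-1/n,1/n)\times(1/2,1)\bigr)$; then $F_n\toFn F$ but $F_n\cap L$ is constant and strictly smaller than $L$). So the ``bookkeeping of which points land in $L$'' cannot be made to work on its own; any completion of your plan must invoke convexity of the $F_n$ (which holds in all of the paper's applications) to force $F_n$ to swallow the fixed point $x'$.
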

\begin{proof}
  By Theorem~12.2.6(a) in~\cite{sch:weil08}, we have
  \begin{displaymath}
    \limsup(F_n\cap L)\subset (F\cap L).
  \end{displaymath}
  If $F$ is empty, this finishes the proof. Otherwise, it suffices to
  show that $(F\cap L)\subset \liminf (F_n\cap L)$, assuming that $F$
  is not empty, so that $F_n$ is also nonempty for all sufficiently
  large $n$.

  (i) For every $x\in\Int(F\cap L)$, there exists a sequence
  $x_n\in F_n$, $n\geq1$, such that $x_n\to x$ and $x_n\in L$ for all
  sufficiently large $n$. Thus,
  $\Int(F\cap L)\subset \liminf(F_n\cap L)$ and therefore
  $$
  F\cap L= \cl (\Int (F\cap L))\subset \liminf(F_n\cap L),
  $$
  where for the equality we have used that $F\cap L$ is regular
  closed, and for the inclusion that the lower limit is always a
  closed set.

  (ii) Note that
  \begin{equation}\label{lem:intersection_proof1}
  \cl ((\Int F)\cap L)=F\cap L.
  \end{equation}
  Indeed, if $x\in (F\cap L)\setminus \{0\}$, then convexity of
  $F\cap L$ and $0\in F\cap L$ imply that
  $x_n:=(1-\frac{1}{n})x\in (\Int F)\cap L$, for all $n\in\NN$. Since
  $x_n\to x$, we obtain $x\in \cl ((\Int F)\cap L)$. Obviously,
  $\{0\}\in\cl ((\Int F)\cap L)$. Thus,
  $F\cap L \subseteq \cl ((\Int F)\cap L)$. The inverse inclusion
  holds trivially. Taking into account~\eqref{lem:intersection_proof1}
  and that $\liminf$ is a closed set it suffices to show that
  \begin{equation}\label{lem:intersection_proof2}
  (\Int F)\cap L\subset \liminf(F_n\cap L).
  \end{equation}
  Assume that $x\in (\Int F)\cap L$. Pick a small enough $\eps>0$ and
  a sufficiently large $R>0$ such that $x+B_\eps\subset F\cap B_R$.
  Since $F\cap B_R$ is convex and contains the origin in the interior,
  it is regular closed. Thus, by part (i),
  $F_n\cap B_R\toFn F\cap B_R$. By Theorem 12.3.2 in
  \cite{sch:weil08}, we also have $F_n\cap B_R\toHn F\cap B_R$. In
  particular, there exists $n_0\in\NN$ such that
  $F\cap B_R\subset (F_n\cap B_R)+B_{\eps/2}$, for $n\geq n_0$, and,
  thereupon, $x+B_{\eps/2}\subset F_n$. Hence $x\in F_n\cap L$ for
  all $n\geq n_0$. Thus, \eqref{lem:intersection_proof2} holds.
\end{proof}

The following result establishes continuity properties of the polar
transform $L\mapsto L^o$ defined by \eqref{eq:polar} on various
subfamilies of closed convex sets which contain the origin.  It
follows from Theorem~4.2 in \cite{mol15} that the polar map
$L\mapsto L^o$ is continuous on $\sK^d_{(0)}$ in the Hausdorff metric,
equivalently, in the Fell topology. While $L^o$ is compact if $L$
contains the origin in its interior, $L^o$ is not necessarily bounded
for $L\in\sK^d_0\setminus \sK^d_{(0)}$.  Recall that $\dom(L)$ denotes
the set of $u\in\R^d$ such that $h(L,u)<\infty$.

\begin{lemma}
  \label{lemma:polar-map}
  The following facts hold. 
  \begin{enumerate}[(i)]
  \item Let $L$ and $L_n$, $n\in\NN$, be closed convex sets which contain
    the origin. Assume that $\dom(L_n)=\dom(L)$ is closed for all $n\in\NN$,
    and $h(L_n,u)\to h(L,u)$ as $n\to\infty$, uniformly over
    $u\in \dom(L)\cap\Sphere$. Then $L_n^o\to L^o$ in the Fell topology. 
  \item The polar transform is continuous as a map from
    $\sK^d_0$ with the Hausdorff metric to $\sF^d$ with the Fell
    topology. 
  \item The polar transform is continuous as the map from the family
    of closed convex sets which contain the origin in their interior
    with the Fell topology to $\sK^d_0$ with the Hausdorff metric.
  \item The polar transform is continuous as the map from
    $\sK^d_{(0)}$ to $\sK^d_{(0)}$, where both spaces are equipped
    with the Hausdorff metric.  
  \end{enumerate}
\end{lemma}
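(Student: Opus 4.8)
The plan is to prove the technical statement (i) first, deduce (ii) from it immediately, treat (iii) by a separate argument, and then obtain (iv) for free. The workhorse throughout is the elementary duality between support and gauge functions: for a closed convex set $L\ni 0$, writing $g_L(x):=\inf\{\lambda>0:x\in\lambda L\}$ for its Minkowski functional, one has $h(L,\cdot)=g_{L^o}(\cdot)$, and applying this to $L^o$ together with $L^{oo}=L$ also $h(L^o,\cdot)=g_L(\cdot)$. Together with the ``slice'' description, valid for every closed convex $L\ni 0$ by positive homogeneity of $h(L,\cdot)$ and $h(L,u)\ge 0$,
\[
  L^o=\{0\}\cup\{\lambda u:\ u\in\dom(L)\cap\Sphere,\ 0<\lambda\le 1/h(L,u)\},\qquad 1/0:=\infty,
\]
this reduces all four statements to estimates on support and gauge functions restricted to the sphere.

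For (i) I would check the two Painlev\'e--Kuratowski inclusions for $L_n^o$ towards $L^o$. For $\limsup_n L_n^o\subseteq L^o$: if $x_{n_k}\to x\ne 0$ with $x_{n_k}\in L_{n_k}^o$, then $u_{n_k}:=x_{n_k}/\|x_{n_k}\|$ satisfies $h(L_{n_k},u_{n_k})<\infty$, so $u_{n_k}\in\dom(L_{n_k})=\dom(L)$, and $u_{n_k}\to u:=x/\|x\|\in\dom(L)$ because $\dom(L)$ is closed; from $\|x_{n_k}\|\,h(L_{n_k},u_{n_k})\le 1$, the uniform estimate $h(L_{n_k},u_{n_k})-h(L,u_{n_k})\to 0$, and lower semicontinuity of $h(L,\cdot)$ one concludes $\|x\|\,h(L,u)\le 1$, i.e.\ $x\in L^o$. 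For $L^o\subseteq\liminf_n L_n^o$: given $x\in L^o\setminus\{0\}$ with $u=x/\|x\|$ (so $\|x\|\le 1/h(L,u)$), the points $x_n:=\min(\|x\|,\,1/h(L_n,u))\,u$ lie in $L_n^o$, and $x_n\to x$ because $h(L_n,u)\to h(L,u)$; the case $x=0$ is trivial. Hence $L_n^o\toFn L^o$. Statement (ii) is the special case $\dom(L_n)=\dom(L)=\R^d$: for compact convex sets containing the origin, Hausdorff convergence is uniform convergence of support functions on $\Sphere$ (see \cite{schn2}), so (i) applies; since $\sF^d$ with the Fell topology and $\sK^d_0$ with the Hausdorff metric are both metrizable, sequential continuity suffices and the map is continuous. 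The same metrizability remark will also cover (iii) and (iv).

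For (iii), using $h(L^o,\cdot)=g_L(\cdot)$ and again that Hausdorff convergence of compact convex sets is uniform convergence of support functions on $\Sphere$, it suffices to show $g_{L_n}\to g_L$ uniformly on $\Sphere$ whenever $L_n\toFn L$ with $0\in\Int L$ and $0\in\Int L_n$. I would argue in three steps. \emph{(a) Interior absorption and a uniform bound.} A compact subset of $\Int L$ is eventually contained in $L_n$: approximate a ball $B_{r'}\subset\Int L$ by an inscribed polytope, note that each of its finitely many vertices is a Painlev\'e--Kuratowski limit of points from $L_n$, and invoke convexity of $L_n$ to absorb a slightly shrunken copy of the polytope; hence $B_r\subset L_n$ for some $r>0$ and all large $n$, so $g_{L_n}\le\|\cdot\|/r$ and all $g_{L_n}$, $g_L$ are $1/r$-Lipschitz, while $L_n^o,L^o\subset B_{1/r}$. \emph{(b) Pointwise convergence.} Fix $u\in\Sphere$ and put $\lambda^\ast:=\sup\{\lambda\ge 0:\lambda u\in L\}$, so $g_L(u)=1/\lambda^\ast$. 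For $0<\lambda<\lambda^\ast$ the point $\lambda u$ is a strict convex combination of $0\in\Int L$ and a point of $L$, hence lies in $\Int L$ and, by (a), in $L_n$ eventually, which gives $\limsup_n g_{L_n}(u)\le 1/\lambda$; letting $\lambda\uparrow\lambda^\ast$ yields $\limsup_n g_{L_n}(u)\le g_L(u)$. When $\lambda^\ast<\infty$, for $\lambda>\lambda^\ast$ the point $\lambda u\notin L$, hence by $\limsup_n L_n\subseteq L$ it lies outside $L_n$ eventually, giving $\liminf_n g_{L_n}(u)\ge 1/\lambda$ and then $\liminf_n g_{L_n}(u)\ge g_L(u)$. \emph{(c) Upgrade to uniform convergence.} Equicontinuity from (a) together with pointwise convergence on the compact sphere gives uniform convergence, hence $h(L_n^o,\cdot)\to h(L^o,\cdot)$ uniformly on $\Sphere$, and since all sets involved lie in $B_{1/r}$ this is exactly $L_n^o\toHn L^o$. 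Finally, (iv) follows from (iii): Hausdorff convergence of compact sets implies Fell convergence, and $L_n,L\in\sK^d_{(0)}$ forces $L_n^o,L^o\in\sK^d_{(0)}$, so (iii) applies; alternatively (iv) is \cite[Theorem~4.2]{mol15}.

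The step I expect to be most delicate is (iii)(a)--(b): the polar map, like the intersection map discussed around Lemma~\ref{lemma:int-det}, is genuinely discontinuous for arbitrary closed sets, and what rescues it here is convexity, which enters once in the ``interior absorption'' argument for the uniform bound and once implicitly through the identification of Hausdorff convergence with uniform convergence of support functions. Possible unboundedness of the $L_n$ is precisely the reason one works with gauges rather than applying (i) directly. By contrast, (i) itself is essentially bookkeeping with semicontinuity and the slice description, and (ii), (iv) are immediate corollaries.
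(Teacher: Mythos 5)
Your proof is correct, and while parts (ii) and (iv) and the $\liminf$ half of (i) coincide with the paper's argument, you take a genuinely different route in two places. In the $\limsup$ half of (i) the paper argues by contradiction, using that $h(L,\cdot)$ is Lipschitz on $\dom(L)\cap\Sphere$ with constant $c_L=\sup_{u\in\dom(L)\cap\Sphere}h(L,u)$; your radial-coordinate argument replaces this by closedness of $\dom(L)$ plus lower semicontinuity of $h(L,\cdot)$, which sidesteps the attainment and boundedness questions implicit in the paper's Lipschitz estimate and is, if anything, cleaner. The larger divergence is in (iii): the paper truncates, applying Lemma~\ref{lemma:int-det}(i) to get $L_n\cap B_R\toHn L\cap B_R$, then part (ii) to the truncations, and finally controls the error via $(A_1\cap A_2)^o=\conv(A_1^o\cup A_2^o)$ and the bound $d_{\rm H}(L^o,\conv(L^o\cup B_{R^{-1}}))\leq R^{-1}$; you instead work with gauge functions, proving an interior-absorption statement ($B_r\subset L_n$ eventually, via polytope approximation and convexity) that yields equi-Lipschitz gauges and uniformly bounded polars, then upgrading pointwise convergence of $g_{L_n}$ on $\Sphere$ to uniform convergence and reading this off as Hausdorff convergence of the polars through $h(L_n^o,\cdot)=g_{L_n}$. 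The paper's route is shorter because it recycles machinery already established in the appendix, whereas yours is self-contained and isolates exactly where convexity enters (the absorption step); your pointwise gauge argument in step (b) is the delicate point you flag, and it is carried out correctly, including the separation of the cases $\lambda<\lambda^\ast$ and $\lambda>\lambda^\ast$ and the use of $\limsup_n L_n\subseteq L$ for the lower bound.
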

\begin{proof}  
  (i) Consider a sequence $(x_{n_k})_{k\in\NN}$ such that
  $x_{n_k}\in L^o_{n_k}$, $k\in\NN$, and $x_{n_k}\to x$ as
  $k\to\infty$. Assume that $x\notin L^o$.  If $h(L,x)=\infty$, that
  is, $x\in (\dom(L))^c$, then also $x_{n_k} \in (\dom(L))^c$ for all
  sufficiently large $k$, since the complement to $\dom(L)$ is
  open. Hence, $x_{n_k} \in (\dom(L_{n_k}))^c$ and
  $h(L_{n_k},x_{n_k})=\infty$, meaning that $x_{n_k}\notin
  L_{n_k}^o$. Assume now that $h(L,x)<\infty$ and
  $h(L_{n_k},x_{n_k})<\infty$ for all $k$.  If
  $u,v\in\dom(L)\cap\Sphere$, then $h(L,u)=\langle x,u\rangle$ for
  some $x\in L$, so that
  \begin{displaymath}
    h(L,u)=\langle x,u-v\rangle +\langle x,v\rangle
    \leq \|x\|\|u-v\|+h(L,v).
  \end{displaymath}
  Hence, the support function of $L$ is Lipschitz on
  $\dom(L)\cap\Sphere$ with
  the Lipschitz constant at most
  $c_L:=\sup_{u\in\dom(L)\cap\Sphere} h(L,u)<\infty$.  Since we assume
  $x\notin L^o$, we have $h(L,x)\geq 1+\eps$ for some $\eps>0$. The
  uniform convergence assumption yields that
  \begin{displaymath}
    h(L_{n_k},x_{n_k})\geq h(L,x_{n_k})-\eps/4
    \geq h(L,x)-\eps/4-c_L\|x_{n_k}-x\|\geq 1+\eps/2
  \end{displaymath}
  for all sufficiently large $k$, meaning that
  $x_{n_k}\notin L^o_{n_k}$, which is a contradiction. Hence,
  $\limsup L^o_n\subset L^o$.

  Let $x\in L^o$. Then $h(L,x)\leq1$, so that
  $h(L_n,x)\leq 1+\eps_n$, where $\eps_n\downarrow 0$ as
  $n\to\infty$. Letting $x_n:=x/(1+\eps_n)$, we have that
  $x_n\in L^o_n$ and $x_n\to x$. Thus, $L^o\subset \liminf L^o_n$.

  (ii) If all sets $(L_n)$ and $L$ are compact, then $\dom(L)=\R^d$,
  the convergence in the Hausdorff metric is equivalent to the uniform
  convergence of support functions on $\Sphere$, see Lemma 1.8.14
  in~\cite{schn2}.  Thus, $L_n^o\toFn L^o$ by part (i).

  (iii) Assume that $L_n\toFn L$. In view of
  Lemma~\ref{lemma:int-det}(i), $L_n\cap B_R\toFn L\cap B_R$, for
  every fixed $R>0$, and, therefore, $L_n\cap B_R\toHn L\cap B_R$ by
  Theorem 1.8.8 in~\cite{schn2}. Fix a sufficiently small $\eps>0$
  such that $B_\eps\subset L$. Then $B_{\eps/2}\subset L_n$, for all
  sufficiently large $n$. By part (ii),
  $(L_n\cap B_R)^o\toFn (L\cap B_R)^o$. Since
  $(L_n\cap B_R)^o\subseteq B_{(\eps/2)^{-1}}$, for all sufficiently
  large $n$, $(L_n\cap B_R)^o\toHn (L\cap B_R)^o$, again by
  Theorem~1.8.8 in~\cite{schn2}. Finally, note that
  \begin{align*}
    d_H(L^{o}_n,L^{o})
    &\leq d_H(L^{o}_n,(L_n\cap B_R)^{o})
      +d_H((L_n\cap B_R)^{o},(L\cap B_R)^{o})+
      d_H((L\cap B_R)^{o},L^{o})\\
    &= d_H(L^{o}_n,\conv (L_n^{o}\cup B_{R^{-1}}))
      +d_H((L_n\cap B_R)^{o},(L\cap B_R)^{o})+
      d_H(\conv (L^{o}\cup B_{R^{-1}}),L^{o})\\
    &\leq R^{-1}+d_H((L_n\cap B_R)^{o},(L\cap B_R)^{o})+R^{-1},
  \end{align*}
  where we have used that
  $(A_1\cap A_2)^{o}=\conv (A_1^{o}\cup A_2^{o})$ and
  $B_R^{o}=B_{R^{-1}}$.
  
  (iv) Follows from (iii), since the Fell topology on $\sK^d_{(0)}$
  coincides with the topology induced by the Hausdorff metric.
\end{proof}

A random closed set $X$ is a measurable map from a probability space
to $\sF^d$ endowed with the Borel $\sigma$-algebra generated by the
Fell topology. This is equivalent to the assumption that
$\{X\cap L\neq\emptyset\}$ is a measurable event for all compact sets
$L$. The distribution of $X$ is uniquely determined by its capacity
functional
\begin{displaymath}
  T_X(L)=\Prob{X\cap L\neq\emptyset},\quad L\in\sC^d.
\end{displaymath}
A sequence $(X_n)_{n\in\NN}$ of random closed sets in $\R^d$ converges
in distribution to a random closed set $X$ (notation $X_n\dodn X$) if
the corresponding probability measures on $\sF^d$ (with the Fell
topology) weakly converge. By Theorem~1.7.7 in~\cite{mo1}, this is
equivalent to the pointwise convergence of capacity functionals
\begin{equation}
  \label{eq:3}
  T_{X_n}(L)\to T_X(L) \quad \text{as}\; n\to\infty
\end{equation}
for all $L\in\sC^d$ which satisfy
\begin{equation}
  \label{eq:15}
  \Prob{X\cap L\neq\emptyset}=\Prob{X\cap \Int L\neq\emptyset},
\end{equation}
that is, $T_X(L)=T_X(\Int L)$.  The latter condition means that the
family $\{F\in\sF^d:F\cap L\neq\emptyset\}$ is a continuity set for
the distribution of $X$, and we also say that $L$ itself is a
continuity set.  It suffices to impose \eqref{eq:3} for sets $L$ which
are regular closed or which are finite unions of balls of positive
radii; these families constitute so called convergence determining
classes, see Corollary~1.7.14 in~\cite{mo1}.

\begin{lemma}
  \label{lemma:restriction}
  A sequence of random closed sets $(X_n)_{n\in\NN}$ in $\R^d$
  converges in distribution to a random closed set $X$ if there exists
  a sequence $(L_m)_{m\in\NN}$ of compact sets such that
  $\Int L_m\uparrow\R^d$ and $(X_n\cap L_m)\dodn (X\cap L_m)$ as
  $n\to\infty$ for each $m\in\NN$.
\end{lemma}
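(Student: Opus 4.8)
The plan is to work throughout at the level of capacity functionals and to invoke Theorem~1.7.7 in \cite{mo1}, which states that $X_n\dodn X$ in $\sF^d$ with the Fell topology is equivalent to $T_{X_n}(L)\to T_X(L)$ for every $L\in\sC^d$ that is a continuity set of $X$, that is, satisfies $T_X(L)=T_X(\Int L)$. Accordingly, I would fix an arbitrary such continuity set $L$ and prove $T_{X_n}(L)\to T_X(L)$. Recall first that, for any compact $L_m$, the intersections $X_n\cap L_m$ and $X\cap L_m$ are genuine random closed sets, being intersections of random closed sets with a deterministic compact set, cf.\ Theorem~1.3.25 in \cite{mo1}, so that the hypothesis $(X_n\cap L_m)\dodn(X\cap L_m)$ is meaningful.

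The first step is to exploit compactness of $L$: since the open sets $\Int L_m$ increase to $\R^d$, there is an $m\in\NN$ with $L\subset\Int L_m$, and hence $\Int L\subset L\subset L_m$. For this $m$ one has $X_n\cap L=(X_n\cap L_m)\cap L$ and $X\cap L=(X\cap L_m)\cap L$, whence $T_{X_n}(L)=T_{X_n\cap L_m}(L)$ and $T_X(L)=T_{X\cap L_m}(L)$; similarly $T_{X\cap L_m}(\Int L)=T_X(\Int L)$ because $L_m\cap\Int L=\Int L$. The second step is to check that $L$ is a continuity set not only of $X$ but also of $X\cap L_m$: combining the previous identities,
\[
T_{X\cap L_m}(L)=T_X(L)=T_X(\Int L)=T_{X\cap L_m}(\Int L).
\]
Now the hypothesis $(X_n\cap L_m)\dodn(X\cap L_m)$ together with Theorem~1.7.7 in \cite{mo1} gives $T_{X_n\cap L_m}(L)\to T_{X\cap L_m}(L)$, and substituting back $T_{X_n}(L)=T_{X_n\cap L_m}(L)$ and $T_X(L)=T_{X\cap L_m}(L)$ yields $T_{X_n}(L)\to T_X(L)$. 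Since $L$ was an arbitrary continuity set of $X$, a final application of Theorem~1.7.7 delivers $X_n\dodn X$.

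The one subtlety — and the reason one cannot simply ``take intersections and pass to the limit'' in the hypothesis — is the well-known failure of continuity of the intersection map on $\sF^d$; it is circumvented here by choosing the test set $L$ strictly inside $L_m$, so that pre-intersecting with $L_m$ does not change which configurations meet $L$, together with the elementary observation that the continuity-set property of $L$ transfers from $X$ to $X\cap L_m$. I expect the only care needed to be this bookkeeping with $\Int L\subset L\subset L_m$; no further estimates are required.
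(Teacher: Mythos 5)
Your proof is correct and follows essentially the same route as the paper's: pick $m$ with $L\subset\Int L_m$, observe that intersecting with $L_m$ then changes neither the capacity functionals on $L$ nor the continuity-set property, and apply the hypothesis $(X_n\cap L_m)\dodn(X\cap L_m)$ via the capacity-functional criterion. The bookkeeping you flag as the only point of care is exactly the content of the paper's argument.
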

\begin{proof}
  We will check \eqref{eq:3}.  Fix an $L\in\sC^d$ such that
  $T_X(L)=T_X(\Int L)$. Pick $m\in\NN$ so large that $L_m$ contains
  $L$ in its interior. We have that
  \begin{displaymath}
    T_{X\cap L_m}(L)=\Prob{X\cap L_m\cap L\neq\emptyset}
    =\Prob{X\cap L_m\cap \Int L\neq\emptyset}=T_{X\cap L_m}(\Int L).
  \end{displaymath}
  Since $(X_n\cap L_m)\dodn (X\cap L_m)$ as $n\to\infty$, we have that
  \begin{multline*}
    T_{X_n}(L)=\Prob{X_n \cap L\neq\emptyset}
    =\Prob{X_n\cap L_m \cap L\neq\emptyset}\\
    \to\Prob{X\cap L_m\cap L\neq\emptyset}=\Prob{X\cap L\neq\emptyset}=T_X(L),
  \end{multline*}
  meaning that $X_n\dodn X$.
\end{proof}

For a random closed set $X$, the functional
\begin{displaymath}
  I_X(L)=\Prob{L\subset X}, \quad L\in\sB(\R^d),
\end{displaymath}
is called the inclusion functional of $X$. While the capacity
functional determines uniquely the distribution of $X$, this is not
the case for the inclusion functional, e.g., the inclusion functional
vanishes if $X$ is a singleton with a nonatomic distribution.

Let $\sE$ be the family of all regular closed convex subsets of $\R^d$
(excluding the empty set), and let $\sE'$ denote
the family of closed complements to all sets from $\sE$. Recall that a
nonempty closed convex set belongs to $\sE$ if and only if its
interior is not empty.

\begin{lemma}
  \label{lemma:bicont}
  The map $F\mapsto \cl(F^c)$ is a bicontinuous (in the Fell topology)
  bijection between $\sE$ and $\sE'$.
\end{lemma}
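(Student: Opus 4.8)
The plan is to establish the bijection first, then the continuity of $F\mapsto\cl(F^{c})$, and finally to obtain continuity of the inverse by running the same argument with the roles of a convex set and its complement exchanged.

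For the bijection: since $(\cl A)^{c}=\Int(A^{c})$ for every $A$, one has $(\cl(F^{c}))^{c}=\Int F$ for $F\in\sE$, which is convex, so $\cl(F^{c})$ is a regular closed set with convex complement and hence lies in $\sE'$; conversely every element of $\sE'$ is by definition of this form. Applying the identity once more and using $\cl(\Int F)=F$ for regular closed $F$ gives $\cl\big((\cl(F^{c}))^{c}\big)=\cl(\Int F)=F$, and symmetrically $\cl\big((\cl(G^{c}))^{c}\big)=G$ for $G\in\sE'$. Thus $F\mapsto\cl(F^{c})$ is a bijection of $\sE$ onto $\sE'$ whose inverse is the map $G\mapsto\cl(G^{c})$ of exactly the same form.

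For continuity, take $F_n\toFn F$ with $F_n,F\in\sE$ and verify the two Painlev\'e--Kuratowski inclusions for $\cl(F_n^{c})$ and $\cl(F^{c})$. Passing to complements and using $(\cl(F^{c}))^{c}=\Int F$, these amount to: (a) for every compact $L\subseteq\Int F$ one has $L\subseteq\Int F_n$ for all large $n$; and (b) for every open $G$ with $G\not\subseteq F$ one has $G\not\subseteq F_n$ for all large $n$. Claim (b) is immediate: if $x\in G\setminus F$, pick $\delta>0$ with $x+B_\delta\subseteq G$ and $(x+B_\delta)\cap F=\emptyset$; since $F_n\toFn F$, eventually $F_n\cap(x+B_\delta)=\emptyset$, so $x\in G\setminus F_n$. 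For (a), given compact $L\subseteq\Int F$, choose $\eps>0$ and $R>0$ so that $L+B_{2\eps}$ lies in the interior of both $F$ and $B_R$; then $F\cap B_R$ is a convex body, hence regular closed, so Lemma~\ref{lemma:int-det}(i) gives $F_n\cap B_R\toFn F\cap B_R$ and therefore $F_n\cap B_R\toHn F\cap B_R$. Taking $n$ large with $d_{\rm H}(F_n\cap B_R,F\cap B_R)\le\eps/2$, a separating hyperplane argument gives $L+B_{\eps/2}\subseteq F_n\cap B_R$: if some $y=\ell+v$ with $\ell\in L$, $|v|\le\eps/2$ were not in $F_n$, take a unit $u$ with $\langle y,u\rangle>h(F_n,u)$; the point $y+\tfrac\eps2 u$ lies in $L+B_\eps\subseteq F\cap B_R$, hence within $\eps/2$ of $F_n\cap B_R$, and its nearest point $w$ there satisfies $\langle w,u\rangle\ge\langle y,u\rangle>h(F_n,u)\ge\langle w,u\rangle$, a contradiction. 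Hence $L\subseteq\Int F_n$. The degenerate cases $F=\emptyset$, $F=\R^{d}$, and $F$ lower-dimensional are covered by the same reasoning with the vacuous instances of (a) and (b).

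It remains to treat bicontinuity. Since the inverse of $F\mapsto\cl(F^{c})$ is the map $G\mapsto\cl(G^{c})$ on $\sE'$ — the same construction with the convex set and its complement interchanged — continuity of the inverse follows by the same argument with these roles swapped: the two Painlev\'e--Kuratowski inclusions exchange their proofs, and the nontrivial one is again handled by intersecting with a large ball, on which the relevant complement is now the convex body to which Lemma~\ref{lemma:int-det}(i) and the separation step apply. The step I expect to be the main obstacle is precisely this transfer: closure of complements is not Fell-continuous without convexity, so one must argue that $F_n$ (resp.\ the convex complement) fills out an entire $\eps$-neighbourhood of any compact subset of the relevant open set, and making this neighbourhood-filling robust in all the degenerate configurations (one set lower-dimensional or unbounded, or equal to $\emptyset$ or $\R^{d}$) is the delicate point, and is exactly where the convexity hypothesis on $\sE$ and $\sE'$ is used.
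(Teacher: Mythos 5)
Your bijection argument and your proof of continuity of the \emph{direct} map $F\mapsto\cl(F^c)$ are correct and essentially coincide with the paper's: the ``hit'' condition for $\cl(F^c)$ is handled by the ``miss'' condition for $F_n\to F$, and the ``miss'' condition is handled by localising to a ball, upgrading Fell to Hausdorff convergence via Lemma~\ref{lemma:int-det}(i), and then exploiting convexity of $F_n$ (your separating-hyperplane step plays the role of the paper's erosion identity for convex sets; the two are interchangeable).

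The genuine gap is the inverse direction, which you dispose of with ``the same argument with these roles swapped.'' The roles are \emph{not} symmetric, because the convexity hypothesis sits on only one side. For the inverse map the hard inclusion reads: if $G_n=\cl(F_n^c)\toFn G=\cl(F^c)$ and $L$ is compact with $L\subseteq \Int G=F^c$, then $L\subseteq\Int G_n=F_n^c$ eventually. Here the set that must swallow $L$ into its interior is $\cl(F_n^c)$, which is \emph{not} convex; your separation step (separate a point $y\notin F_n$ from the convex set $F_n$) has no analogue for $\cl(F_n^c)$, and the one-sided Hausdorff estimate $G\cap B_R\subseteq (G_n\cap B_R)+B_{\eps/2}$ does not by itself exclude $L\cap F_n\neq\emptyset$: a thin convex slab $F_n$ passing through $L$ satisfies that inclusion while violating the conclusion. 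In fact the example $F_n=\{|x_1|\le 1/n\}$ shows the transfer cannot work as stated even in the degenerate cases you wave at: $\cl(F_n^c)=\{|x_1|\ge 1/n\}\toFn\R^d=\cl(\emptyset^c)$, yet $F_n\toFn\{x_1=0\}\neq\emptyset$, so the inverse map is not continuous at $\R^d\in\sE'$ and any correct argument must use the nondegeneracy of $F$ in an essential way. A workable proof for $F$ with nonempty interior runs differently from a role swap: use the miss-part of $G_n\to G$ (or the reverse Hausdorff inclusion) to place a \emph{fixed} ball $B(p,\rho)\subseteq\Int F$ inside $\Int F_n$ for all large $n$; then, if $x_n\in F_n\cap L$ with $x_n\to x\in F^c$, convexity of $F_n$ forces $F_n\supseteq\conv(\{x_n\}\cup B(p,\rho))$, hence $F_n$ contains a fixed open ball around an intermediate point of $[x,p]$ lying in $F^c$, so $\cl(F_n^c)$ misses an open set that $\cl(F^c)$ hits, contradicting the hit-part of $G_n\toFn G$. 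This is the missing ingredient; the paper too treats the inverse direction by a separate argument rather than by symmetry.
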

\begin{proof}
  The map $F\mapsto\cl(F^c)$ is self-inverse on $\sE$, hence a
  bijection. Let us prove continuity.
  It is known that the map $F\mapsto\cl(F^c)$ is lower
  semicontinuous on $\mathcal{F}^d$,
  see~\cite[Theorem~12.2.6(b)]{sch:weil08}.  Thus, it suffices to
  prove its upper semicontinuity on $\sE$ and $\sE'$.

  Assume that $F_n\toFn F$.
  Suppose that $\cl(F^c)\cap L=\emptyset$ for a nonempty compact set
  $L$ (in this case $F$ is necessarily nonempty). By compactness,
  $\cl(F^c)\cap (L+B_\eps)=\varnothing$, for a sufficiently small
  $\eps>0$. Therefore,
  \begin{equation}
    \label{eq:bicont_lemma_proof1}
    L+B_\eps\subseteq (\cl(F^c))^c=\Int F.
  \end{equation}
  By convexity of $F$, it is possible to replace $L$ with its convex
  hull, so assume that $L$ is convex. Pick a large $R>0$ such that
  $L+B_\eps\subseteq \Int B_R$. From Lemma~\ref{lemma:int-det}(i) and
  using the same reasoning as in the proof of part~(iii) of
  Lemma~\ref{lemma:polar-map} we conclude that
  $$
  F_n\cap B_R \toHn F\cap B_R\quad \text{as}\; n\to\infty.
  $$ 
  Thus, $(F\cap B_R)\subset (F_n\cap B_R)+B_{\eps/2}$ for all
  sufficiently large $n$. In conjunction
  with~\eqref{eq:bicont_lemma_proof1}, this yields
  $L+B_\eps\subset (F_n\cap B_R)+B_{\eps/2}$ for sufficiently large
  $n$. Since $L$ and $F_n\cap B_R$ are convex, we conclude that
  $L\subset \Int (F_n\cap B_R)\subset \Int F_n$. Hence,
  $L\cap \cl(F_n^c)=\emptyset$ for all sufficiently large $n$. This
  observation completes the proof of continuity of the direct mapping.

  It remains to prove upper semicontinuity of the inverse
  mapping. Assume that $\cl(F_n^c)\toFn \cl(F^c)$ as $n\to\infty$,
  with $F_n,F\in\sE$.
  Assume that $F\cap L=\emptyset$ for a nonempty compact set $L$.  We
  aim to show that $F_n\cap L=\emptyset$, for all sufficiently large
  $n$.  By compactness
  of $L$,
  it suffices to prove the statement
  for $L$ being a closed ball.  Fix a point
  $z\in\Int F\neq \varnothing$ such that $B_{2\eps}(z)\subset F$ for
  some $\eps>0$. Then $B_{\eps}(z)\cap \cl(F^c)=\varnothing$ and the
  convergence $\cl(F_n^c)\toFn \cl(F^c)$ implies
  $B_{\eps}(z)\cap \cl(F_n^c)=\varnothing$ and so
  $B_{\eps}(z)\subset F_n$ for all sufficiently large $n$.  Since $F$
  is convex, there exists a closed half-space $H^{-}$ such that
  $L\subset \Int H^{-}$ and $F\cap H^{-}=\emptyset$, so that
  $H^{-}\subset\cl(F^c)$. Recall that we aim to show that
  $F_n\cap L=\emptyset$, for all sufficiently large $n$. If this is
  not true, then there exists a sequence $(y_n)$ such that
  $y_n\in F_n\cap L$ for infinitely many $n\in\mathbb{N}$. Passing to
  a subsequence, assume that this holds for all $n$ and
  $y_n\to y\in L$ for $y_n\in F_n$. Let $M_n$ be the convex hull of
  $B_\eps(z)$ and $y_n$, which is a regular closed set such that $M_n$
  converges to the convex hull $M$ of $B_\eps(z)$ and $y$. By the
  first part of the lemma, $\cl(M_n^c)\toFn\cl(M^c)$. Since
  $M_n\subset F_n$, we have $\cl(M_n^c)\supset \cl(F_n^c)$. Thus,
  $H^{-}\subset\cl(F^c)\subset \cl(M^c)$.  This is a contradiction
  because $M\cap \partial H^{-}\neq \varnothing$.  Therefore,
  $L\cap F_n=\emptyset$ and the proof is complete.
\end{proof}

The following result derives the convergence in distribution of
random closed convex sets with values in $\sE$ from the convergence of
their inclusion functionals. It provides an alternative proof and an
extension of Proposition~1.8.16 in~\cite{mo1}, which establishes this
fact for random sets with values in $\sK_{(0)}^d$.

\begin{theorem}
  \label{thm:inclusion}
  Let $X$ and $X_n$, $n\in\NN$ be random closed sets in $\R^d$ which
  almost surely take values from the family $\sE$ of nonempty regular
  closed convex sets. If
  \begin{equation}\label{eq:22}
    \Prob{L\subset X_n}\to \Prob{L\subset X}\quad \text{as}\quad n\to\infty
  \end{equation}
  for all regular closed $L\in\sK^d$ such that 
  \begin{equation}
    \label{eq:2}
    \Prob{L\subset X}=\Prob{L\subset \Int X},
  \end{equation}
  then $X_n\dodn X$ as $n\to\infty$.
\end{theorem}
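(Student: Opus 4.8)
The plan is to deduce the weak convergence by passing to complementary random sets, reducing the problem to the convergence of their capacity functionals, and then exploiting the hypothesis on inclusion functionals through a sandwiching argument. So, first, put $Y_n:=\cl(X_n^c)=(\Int X_n)^c$ and $Y:=\cl(X^c)=(\Int X)^c$; these are random closed sets with values a.s.\ in $\sE'$. By Lemma~\ref{lemma:bicont}, the map $F\mapsto\cl(F^c)$ and its inverse are continuous between $\sE$ and $\sE'$ in the Fell topology, so, regarding $X_n,X$ and $Y_n,Y$ as random elements of these Borel subspaces of $\sF^d$, the continuous mapping theorem shows that $X_n\dodn X$ is equivalent to $Y_n\dodn Y$ (and weak convergence is unaffected by this restriction, by a routine Portmanteau argument). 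It therefore suffices to prove $Y_n\dodn Y$.

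By Theorem~1.7.7 and Corollary~1.7.14 in \cite{mo1}, this in turn follows once we show that $T_{Y_n}(L)\to T_Y(L)$ for every finite union $L$ of closed balls of positive radii that is a continuity set for $Y$. Fix such an $L$ and set $M:=\conv L\in\sK^d$; since $L$ contains a ball, $M$ is regular closed. As $\Int X_n$ is convex, $\{L\subseteq\Int X_n\}=\{M\subseteq\Int X_n\}$, and, since $Y_n=(\Int X_n)^c$, we have $\{Y_n\cap L\neq\emptyset\}=\{L\not\subseteq\Int X_n\}$; hence $T_{Y_n}(L)=1-\Prob{M\subseteq\Int X_n}$ and, likewise, $T_Y(L)=1-\Prob{M\subseteq\Int X}$. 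Using $\cl(\conv(\Int L))=M$, the convexity of $\Int X$, and the regular closedness of $X$, one checks that $\{\Int L\subseteq\Int X\}=\{M\subseteq X\}$, so the continuity-set condition $T_Y(L)=T_Y(\Int L)$ is precisely condition \eqref{eq:2} for $M$. Everything thus reduces to proving that, for every regular closed $M\in\sK^d$ satisfying \eqref{eq:2},
\begin{equation*}
  \Prob{M\subseteq\Int X_n}\longrightarrow\Prob{M\subseteq\Int X}=\Prob{M\subseteq X}.
\end{equation*}

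Here the upper bound is immediate, since $\{M\subseteq\Int X_n\}\subseteq\{M\subseteq X_n\}$ and $\Prob{M\subseteq X_n}\to\Prob{M\subseteq X}$ by hypothesis. For the lower bound I would inflate $M$: fix $z\in\Int M$ and put $M_\eps:=z+(1+\eps)(M-z)$ for $\eps>0$. Each $M_\eps$ is a regular closed member of $\sK^d$ with $M\subseteq\Int M_\eps$, so $\{M_\eps\subseteq X_n\}\subseteq\{M\subseteq\Int X_n\}$. A monotonicity argument shows that $\eps\mapsto\Prob{M_\eps\subseteq X}$ is non-increasing with right limit $\Prob{M_\eps\subseteq\Int X}$ at every $\eps$, so $M_\eps$ satisfies \eqref{eq:2} for all but countably many $\eps$; choosing such a sequence $\eps_j\downarrow0$ and applying the hypothesis to each $M_{\eps_j}$ gives $\liminf_n\Prob{M\subseteq\Int X_n}\ge\Prob{M_{\eps_j}\subseteq X}$, and letting $j\to\infty$ yields $\liminf_n\Prob{M\subseteq\Int X_n}\ge\Prob{M\subseteq\Int X}$, because $M_{\eps_j}\downarrow M$ and $\{M\subseteq\Int X\}=\bigcup_j\{M_{\eps_j}\subseteq X\}$ (recall that $M$ is compact and $\Int X$ open). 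Combined with condition \eqref{eq:2} for $M$, this gives the displayed convergence.

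I expect the main obstacle to be precisely this last step: the hypothesis controls $\Prob{M\subseteq X_n}$, an event about the \emph{closed} set $X_n$, whereas the capacity functional of the complement $Y_n$ is governed by the \emph{open} set $\Int X_n$, about which the hypothesis says nothing directly. Bridging this gap is what forces the inflation $M\rightsquigarrow M_\eps$ together with the verification that the inflated sets stay within $\sK^d$ and can be chosen to satisfy \eqref{eq:2} along a sequence of scales shrinking to zero. A minor ancillary point, needed merely so that the hypothesis is applicable, is the passage from the test sets $L$ (finite unions of balls) to their convex hulls $M$, which is legitimate because $\Int X_n$ is convex.
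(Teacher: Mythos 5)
Your proposal is correct and follows essentially the same route as the paper's proof: reduce to the closed complements $\cl(X_n^c)$ via Lemma~\ref{lemma:bicont}, identify the continuity-set condition for the complement with condition \eqref{eq:2}, and sandwich $\Prob{L\subset\Int X_n}$ between the inclusion probabilities of $L$ and of an inflated copy of $L$ chosen (along a sequence of scales, by a countability/monotonicity argument) to satisfy \eqref{eq:2}. The only cosmetic differences are that you inflate by a homothety $M_\eps=z+(1+\eps)(M-z)$ where the paper uses outer parallel sets $L+B_{\eps_k}$, and that you pass explicitly to the convex hull $M=\conv L$ of the test set, a step the paper leaves implicit.
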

\begin{proof}
  In view of Lemma~\ref{lemma:bicont} it suffices to prove that
  $\cl(X_n^c)\dodn \cl(X^c)$ as $n\to\infty$. Furthermore, since
  regular closed compact sets constitute a convergence determining
  class, see Corollary~1.7.14 in \cite{mo1}, it suffices to check that
  \begin{equation}\label{eq:thm_inclusion_proof1}
    \Prob{\cl(X_n^c)\cap L\neq\emptyset}\to \Prob{\cl(X^c)\cap
      L\neq\emptyset}
    \quad\text{as}\quad n\to\infty,
  \end{equation}
  for all regular closed $L\in\sC^d$, which are continuity sets for
  $\cl(X^c)$. The latter means that
  \begin{equation}\label{eq:thm_inclusion_proof2}
    \Prob{\cl(X^c)\cap L=\emptyset}=\Prob{\cl(X^c)\cap \Int L=\emptyset}.
  \end{equation}
  Fix a regular closed set $L\in\sC^d$ such
  that~\eqref{eq:thm_inclusion_proof2} holds. Since
  \begin{displaymath}
    \Prob{\cl(X^c)\cap L=\emptyset}=\Prob{L\subset \Int X}\quad\text{and}\quad
    \Prob{\cl(X^c)\cap \Int L=\emptyset}=\Prob{\Int L\subset \Int X},
  \end{displaymath}  
  we conclude that 
  $$
  \Prob{L\subset X}\leq \Prob{\Int L\subset \Int X}
  =\Prob{L\subset \Int X}\leq\Prob{L\subset X}.
  $$
  Thus, $L$ satisfies \eqref{eq:2}. 
  
  Let $(\eps_k)_{k\in\NN}$ be a sequence of positive numbers such that
  $\eps_k\downarrow 0$ as $k\to\infty$, and
  \begin{displaymath}
    \Prob{L+B_{\eps_k}\subset X}=\Prob{L+B_{\eps_k}\subset \Int X},\quad k\in\NN.
  \end{displaymath}
  Sending $n\to\infty$ in the chain of inequalities
  $$
  \Prob{L+B_{\eps_k} \subset X_n}\leq \Prob{L\subseteq \Int X_n}
  =\Prob{\cl(X_n^c)\cap L=\emptyset}
  \leq \Prob{L\subset X_n},
  $$
  and using~\eqref{eq:22}, we conclude that
  \begin{multline}\label{eq:thm_inclusion_proof3}
    \Prob{L+B_{\eps_k} \subset X}\leq \liminf_{n\to\infty}\Prob{\cl(X_n^c)\cap
      L=\emptyset}\\
    \leq \limsup_{n\to\infty}\Prob{\cl(X_n^c)\cap L=\emptyset}
    \leq \Prob{L\subset X}.
  \end{multline}
  Since
  $$
  \Prob{L+B_{\eps_k} \subset X}\uparrow \Prob{L\subset \Int X}
  =\Prob{L\subset X}\quad\text{as}\quad k\to\infty,
  $$
  the desired convergence~\eqref{eq:thm_inclusion_proof1} follows upon
  sending $k\to\infty$ in~\eqref{eq:thm_inclusion_proof3}.
\end{proof}  

If $F$ is an arbitrary closed set, then, in general, the convergence
$X_n\dodn X$ does not imply the convergence of $X_n\cap F$ to
$X\cap F$. The latter is equivalent to the convergence of the capacity
functionals of $X_n\cap F$ on sets $L\in\sC^d$ such that
\begin{displaymath}
  \Prob{(X\cap F)\cap L\neq\emptyset}=\Prob{(X\cap F)\cap \Int
    L\neq\emptyset}. 
\end{displaymath}
At a first glance, the aforementioned implication looks plausible
since the capacity functional of $X_n\cap F$ on $L$ is just the
capacity function of $X_n$ on $F\cap L$. However, from the convergence
$X_n\dodn X$ we can only deduce the convergence of their capacity
functionals on sets $F\cap L$ under condition that
\begin{displaymath}
  \Prob{X\cap F\cap L\neq\emptyset}=
  \Prob{X\cap \Int(F\cap L)\neq\emptyset}.
\end{displaymath}
This latter is too restrictive if $F$ has empty interior. The
following result relies on an alternative argument in order to
establish convergence in distribution of random sets intersected with
a deterministic closed convex set containing the origin.

\begin{lemma}
  \label{lemma:intersection}
  Let $X$ and $X_n$, $n\in\NN$, be random closed convex sets. Assume that
  $\Prob{0\in X}=\Prob{0\in \Int X}>0$ and
  $\Prob{0\in X_n}=\Prob{0\in \Int X_n}>0$ for all sufficiently large
  $n$. Assume that \eqref{eq:22} holds for all $L\in\sK^d$ satisfying
  \eqref{eq:2}. Let $F$ be a closed convex set which contains the origin. Then
  \begin{equation}
    \label{eq:10}
    \Prob{X_n\cap F\cap L\neq\emptyset, 0\in X_n}
    \to \Prob{X\cap F\cap L\neq\emptyset, 0\in X}\quad \text{as}\quad n\to\infty
  \end{equation}
  for each compact set $L$ in $\R^d$ such that
  \begin{equation}\label{eq:108}
    \Prob{(X\cap F)\cap L\neq\emptyset, 0\in X}
    =\Prob{(X\cap F)\cap \Int L\neq\emptyset, 0\in X}.
  \end{equation}
\end{lemma}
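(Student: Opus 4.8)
The plan is to deduce \eqref{eq:10} from a convergence-in-distribution statement for the auxiliary random sets obtained by restricting $X$ and $X_n$ to the events $\{0\in X\}$ and $\{0\in X_n\}$, and then to transfer the intersection with the fixed set $F$ through the deterministic continuity property of Lemma~\ref{lemma:int-det}(ii) by means of a Skorokhod representation. Set $\tilde X:=X$ on $\{0\in X\}$ and $\tilde X:=\emptyset$ on $\{0\notin X\}$, and define $\tilde X_n$ from $X_n$ in the same way (putting $\tilde X_n:=\emptyset$ for the finitely many indices $n$ with $\Prob{0\in X_n}\neq\Prob{0\in\Int X_n}$, which affects no limit). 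Since $\{0\in X\}=\{X\cap\{0\}\neq\emptyset\}$ is measurable, $\tilde X$ and the $\tilde X_n$ are random closed sets, and by the hypotheses $\Prob{0\in X}=\Prob{0\in\Int X}$ and $\Prob{0\in X_n}=\Prob{0\in\Int X_n}$ they almost surely take values in the family $\sE$ of regular closed convex sets, because on $\{\tilde X\neq\emptyset\}$ one has $\tilde X=X$ with $0\in\Int X$, hence $\tilde X$ has nonempty interior. The key identity is $\{(\tilde X_n\cap F)\cap L\neq\emptyset\}=\{X_n\cap F\cap L\neq\emptyset,\,0\in X_n\}$, and likewise with $X$ in place of $X_n$; thus \eqref{eq:10} and the continuity condition \eqref{eq:108} are precisely the assertion that the capacity functionals of $\tilde X_n\cap F$ converge to those of $\tilde X\cap F$ at the continuity set $L$.

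First I would verify $\tilde X_n\dodn\tilde X$ using Theorem~\ref{thm:inclusion}. For a regular closed $L\in\sK^d$ put $M:=\conv(L\cup\{0\})$; since $L\subset M$ and the nonempty regular closed convex set $L$ has nonempty interior, $M$ is a regular closed convex compact set containing the origin. Convexity of $X_n$ gives $\{L\subset\tilde X_n\}=\{L\subset X_n,\,0\in X_n\}=\{M\subset X_n\}$ and, in the same way, $\Prob{L\subset\tilde X}=\Prob{M\subset X}$; moreover, using $\Prob{0\in X}=\Prob{0\in\Int X}$ one checks that almost surely $\{L\subset\Int\tilde X\}=\{L\subset\Int X,\,0\in X\}=\{L\subset\Int X,\,0\in\Int X\}=\{M\subset\Int X\}$. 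Hence $L$ is a continuity set for the inclusion functional of $\tilde X$ in the sense required by Theorem~\ref{thm:inclusion} exactly when $M$ satisfies \eqref{eq:2} for $X$, and for every such $L$ relation \eqref{eq:22} applied to $M$ gives $\Prob{L\subset\tilde X_n}=\Prob{M\subset X_n}\to\Prob{M\subset X}=\Prob{L\subset\tilde X}$. Theorem~\ref{thm:inclusion} now yields $\tilde X_n\dodn\tilde X$.

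To conclude, recall that $\sF^d$ with the Fell topology is a compact metric space, so by the Skorokhod representation theorem there are random closed sets $\tilde X_n'\od\tilde X_n$ and $\tilde X'\od\tilde X$ on a common probability space with $\tilde X_n'\toFn\tilde X'$ almost surely. On the event $\{\tilde X'\neq\emptyset\}$ we have $0\in\Int\tilde X'$, and since $\tilde X'$ and $F$ are convex with $0\in F$, Lemma~\ref{lemma:int-det}(ii) gives $\tilde X_n'\cap F\toFn\tilde X'\cap F$; on the event $\{\tilde X'=\emptyset\}$ the inclusion $\tilde X_n'\cap F\subset\tilde X_n'$ together with $\tilde X_n'\toFn\emptyset$ gives the same convergence. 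Thus $\tilde X_n\cap F\dodn\tilde X\cap F$, and the characterisation of convergence in distribution of random closed sets via capacity functionals at continuity sets shows that, for every compact $L$ satisfying \eqref{eq:108}, $\Prob{(\tilde X_n\cap F)\cap L\neq\emptyset}\to\Prob{(\tilde X\cap F)\cap L\neq\emptyset}$, which is exactly \eqref{eq:10}.

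The step I expect to be the main obstacle is the matching of continuity conditions in the second paragraph: one must see that the auxiliary body $M=\conv(L\cup\{0\})$ is a continuity set for the inclusion functional of $X$ if and only if $L$ is one for the inclusion functional of $\tilde X$, and this equivalence genuinely uses $\Prob{0\in X}=\Prob{0\in\Int X}$ --- without it the events $\{L\subset\Int\tilde X\}$ and $\{M\subset\Int X\}$ need not coincide, since the origin could lie on $\partial X$ with positive probability. The remaining ingredients --- the restriction construction, the Skorokhod coupling, and the deterministic Lemma~\ref{lemma:int-det}(ii) --- are routine once this matching is established.
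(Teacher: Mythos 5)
Your proposal is correct and follows essentially the same route as the paper: you restrict $X_n$ and $X$ to the event that they contain the origin (the paper uses $\{0\in\Int X\}$, which is a.s.\ the same under the stated hypotheses), verify the hypotheses of Theorem~\ref{thm:inclusion} via the identity $\{L\subset\tilde X\}=\{\conv(L\cup\{0\})\subset X\}$, and then transfer the intersection with $F$ through Lemma~\ref{lemma:int-det}(ii). The only cosmetic difference is that you make the continuous mapping step explicit via a Skorokhod coupling, where the paper simply cites the continuous mapping theorem.
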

\begin{proof}
  Let $Y_n$ (respectively, $Y$) be a random closed convex set
  which has the conditional distribution of $X_n$ given that
  $0\in\Int X_n$ (respectively, $0\in\Int X$). Note that the
  conditional distribution does not change if we replace the
  conditions by $0\in X_n$ and $0\in X$.
  By construction, random closed convex sets $Y_n$ and $Y$ almost
  surely belong to the family $\sE$. Let us show with the help of
  Theorem~\ref{thm:inclusion}, that $Y_n\dodn Y$ as $n\to\infty$. Let
  $L$ be a nonempty compact set such that
  \begin{equation}
    \label{eq:20}
    \Prob{L\subset Y}=\Prob{L\subset \Int Y}.
  \end{equation}  
  The latter is equivalent to
  \begin{displaymath}
    \Prob{L\subset X,0\in\Int X}=\Prob{L\subset\Int X,0\in\Int X},
  \end{displaymath}  
  and, since $\Prob{0\in X}=\Prob{0\in\Int X}$, \eqref{eq:20} is also
  equivalent to 
  \begin{displaymath}
   \Prob{L\subset X,0\in X}=\Prob{L\subset\Int X,0\in\Int X}.
  \end{displaymath} 
  Finally, by convexity of $X$ we see that~\eqref{eq:20} is the same as
  \begin{displaymath}
    \Prob{\conv(L\cup\{0\})\subset X}=\Prob{\conv(L\cup\{0\})\subset
      \Int X}.
  \end{displaymath}
  Thus, if a nonempty compact set $L$ satisfies~\eqref{eq:20},
  then $\conv(L\cup\{0\})\in\sK^d$ satisfies~\eqref{eq:2} and we 
  conclude that
  \begin{multline*}
    \Prob{L\subset Y_n}
    =\frac{\Prob{L\subset X_n,0\in X_n}}{\Prob{0\in X_n}}
    =\frac{\Prob{\conv(L\cup\{0\})\subset X_n}}{\Prob{0\in X_n}}\\
      \to \frac{\Prob{\conv(L\cup\{0\})\subset X}}{\Prob{0\in X}}
        =\frac{\Prob{L\subset X,0\in X}}{\Prob{0\in X}}
        =\Prob{L\subset Y}
    \quad\text{as}\quad n\to \infty,
  \end{multline*}
  where the convergence of the numerators (respectively, denominators)
  follows from \eqref{eq:22} with $L$ replaced by $\conv(L\cup\{0\})$
  (respectively, by $\{0\}$).
  
  Theorem~\ref{thm:inclusion} yields that $Y_n\dodn Y$ as
  $n\to\infty$. 
  By Lemma~\ref{lemma:int-det}(ii) and the continuous mapping
  theorem $Y_n\cap F \dodn Y\cap F$. The latter means that
  \begin{displaymath}
    \Prob{(Y_n\cap F)\cap L\neq\emptyset}\to \Prob{(Y\cap F)\cap L\neq\emptyset}
    \quad \text{as}\quad n\to\infty
  \end{displaymath}
  for all $L$ such that 
  \begin{displaymath}
    \Prob{(Y\cap F)\cap L\neq\emptyset}=\Prob{(Y\cap F)\cap \Int L\neq\emptyset}.
  \end{displaymath}
  By the definition of $Y_n$ and $Y$, this is the same as \eqref{eq:10}
  for $L$ satisfying \eqref{eq:108}.
\end{proof}

The next result follows either from Lemma~\ref{lemma:intersection} or
from Lemma~\ref{lemma:int-det} and continuous mapping theorem.

\begin{corollary}
  \label{cor:intersection}
  Let $X$ and $X_n$, $n\in\NN$, be random closed convex sets, whose interiors
  almost surely contain the origin. If $X_n\dodn X$ as $n\to\infty$,
  then $X_n\cap F\dodn X\cap F$ as $n\to\infty$ for each closed
  convex set $F$ which contains the origin.
\end{corollary}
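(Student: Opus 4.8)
The plan is to exhibit $X_n\cap F$ as the image of $X_n$ under a single fixed Borel map on the space $\sF^d$ of closed subsets of $\R^d$ and then to apply the continuous mapping theorem for weak convergence. For the fixed closed convex set $F$ with $0\in F$, let $T_F\colon\sF^d\to\sF^d$ be given by $T_F(A):=A\cap F$. Intersection with a fixed closed set is a measurable operation on random closed sets (for compact $L$ one has $\{A:T_F(A)\cap L\neq\emptyset\}=\{A:A\cap(F\cap L)\neq\emptyset\}$, and $F\cap L$ is compact), so $T_F$ is Borel measurable, and $T_F(X_n)=X_n\cap F$, $T_F(X)=X\cap F$ are again random closed sets for which asking convergence in distribution makes sense.

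The key point is to control the discontinuity set of $T_F$. First I would check that every convex $A\in\sF^d$ with $0\in\Int A$ is a continuity point of $T_F$: if $A_n\toFn A$ for such an $A$, then, since $F$ is convex and $0\in F$, Lemma~\ref{lemma:int-det}(ii) applies (with $A$ playing the role of ``$F$'' there and $F$ playing the role of ``$L$'') and gives $A_n\cap F\toFn A\cap F$, that is, $T_F(A_n)\toFn T_F(A)$. Hence the set $D_{T_F}$ of discontinuity points of $T_F$ is contained in the set of closed sets that are either nonconvex or fail to contain the origin in their interior; by hypothesis $X$ is almost surely convex with $0\in\Int X$, so $\Prob{X\in D_{T_F}}=0$.

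With measurability of $T_F$ and $\Prob{X\in D_{T_F}}=0$ established, the continuous mapping theorem for convergence in distribution of random closed sets yields $X_n\cap F=T_F(X_n)\dodn T_F(X)=X\cap F$, which is the claim. I do not expect a genuine obstacle: all the analytic content has already been packaged into Lemma~\ref{lemma:int-det}, and the only thing needing care is the bookkeeping of that lemma's hypotheses — in particular, that it is the \emph{limiting} set $X$, not the approximants $X_n$, that must be convex with the origin in its interior, and that $F$ need only be convex and contain the origin (no compactness required). As indicated in the statement, one could alternatively deduce the result from Lemma~\ref{lemma:intersection}: its standing assumptions $\Prob{0\in X}=\Prob{0\in\Int X}=1>0$ and their analogues for $X_n$ hold trivially, and the required convergence of inclusion functionals \eqref{eq:22} at continuity sets follows from $X_n\dodn X$ by a short Portmanteau argument (using that $\{A:L\subseteq A\}$ is Fell-closed and, for convex $A$, that $\{A:L\subseteq\Int A\}$ is relatively Fell-open); but the continuous-mapping route above is the most economical.
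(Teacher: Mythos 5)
Your argument is correct and is exactly the route the paper itself takes: the paper's entire proof of Corollary~\ref{cor:intersection} is the remark that it follows from Lemma~\ref{lemma:int-det} together with the continuous mapping theorem (or alternatively from Lemma~\ref{lemma:intersection}), and you have simply filled in the bookkeeping — measurability of $A\mapsto A\cap F$ and the fact that, by Lemma~\ref{lemma:int-det}(ii), its discontinuity set avoids the convex sets with $0$ in the interior, which carry the full distribution of $X$. Nothing further is needed.
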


The following result is used in the proof of Theorem~\ref{thm:main1}
in order to establish convergence in distribution of (not
necessarily convex) random closed sets by approximating them with
convex ones.

\begin{lemma}
  \label{lemma:approx}
  Let $(X_n)_{n\in\NN}$ be a sequence of random closed sets in
  $\R^d$. Assume that $Y_{m,n}^-\subset X_n\subset Y_{m,n}^+$ a.s.\
  for all $n,m\in\NN$ and sequences $(Y_{m,n}^-)_{n\in\NN}$ and
  $(Y_{m,n}^+)_{n\in\NN}$ of random closed sets. Further, assume that,
  for each $m\in\NN$:
  \begin{itemize}
  \item[(i)] the random closed set $Y_{m,n}^+$ converges in distribution to
    a random closed set $Y_m^+$ as $n\to\infty$;
  \item[(ii)] there exists a random closed set $Y_m^-$ such that
    \begin{equation}
      \label{eq:11}
      \Prob{Y_{m,n}^-\cap L\neq\emptyset,0\in Y_{m,n}^-}
      \to \Prob{Y_{m}^-\cap L\neq\emptyset,0\in Y_{m}^-}
      \quad \text{as}\quad n\to\infty
    \end{equation}
    for all $L\in\sC^d$ which are continuity sets for $Y_m^-$.
  \end{itemize}
  Further assume that $\Prob{0\in Y_m^-}\to 1$, and that $Y_m^+\downarrow Z$,
  $Y_m^-\uparrow Z$ a.s.\ as $m\to\infty$, in the Fell topology for a
  random closed set $Z$. Then $X_n\dodn Z$ as $n\to\infty$.
\end{lemma}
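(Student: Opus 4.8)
The plan is to establish $X_n\dodn Z$ by checking convergence of capacity functionals on a convergence-determining class. By Theorem~1.7.7 and Corollary~1.7.14 in \cite{mo1} it is enough to show that $T_{X_n}(L)=\Prob{X_n\cap L\neq\emptyset}\to T_Z(L)$ as $n\to\infty$ for every set $L$ which is a finite union of closed balls of positive radii and is, in addition, a continuity set for $Z$, that is, $T_Z(L)=T_Z(\Int L)$. As a preliminary step I would record that, since $(Y_m^+)$ is almost surely decreasing and Fell-convergent to $Z$ while $(Y_m^-)$ is almost surely increasing and Fell-convergent to $Z$, one has $Z=\bigcap_m Y_m^+$ and $Z=\cl\big(\bigcup_m Y_m^-\big)$ almost surely. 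The inclusions $Y_{m,n}^-\subset X_n\subset Y_{n,m}^+$ then give $\Prob{Y_{m,n}^-\cap L\neq\emptyset}\le T_{X_n}(L)\le\Prob{Y_{n,m}^+\cap L\neq\emptyset}$, and I would treat the two bounds separately, letting $n\to\infty$ first and $m\to\infty$ afterwards.

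For the upper bound I would use that $\{F\in\sF^d:F\cap L\neq\emptyset\}$ is closed in the Fell topology when $L$ is compact, so assumption~(i) together with the portmanteau theorem gives $\limsup_{n\to\infty}\Prob{Y_{n,m}^+\cap L\neq\emptyset}\le\Prob{Y_m^+\cap L\neq\emptyset}$ for every $m$, hence $\limsup_{n\to\infty}T_{X_n}(L)\le\Prob{Y_m^+\cap L\neq\emptyset}$. Letting $m\to\infty$, the compact sets $Y_m^+\cap L$ decrease to $Z\cap L$, so the events $\{Y_m^+\cap L\neq\emptyset\}$ decrease to $\{Z\cap L\neq\emptyset\}$ (a nested family of nonempty compacta has nonempty intersection), and therefore $\limsup_{n\to\infty}T_{X_n}(L)\le T_Z(L)$.

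For the lower bound I would first pass to the open set $\Int L$, so that $T_{X_n}(L)\ge\Prob{Y_{m,n}^-\cap\Int L\neq\emptyset,\;0\in Y_{m,n}^-}$, and then exhaust $\Int L$ from within by compact sets $L'$ that are continuity sets for $Y_m^-$; assumption~(ii) yields $\Prob{Y_{m,n}^-\cap L'\neq\emptyset,\;0\in Y_{m,n}^-}\to\Prob{Y_m^-\cap L'\neq\emptyset,\;0\in Y_m^-}$, and taking the supremum over such $L'$ gives $\liminf_{n\to\infty}T_{X_n}(L)\ge\Prob{Y_m^-\cap\Int L\neq\emptyset,\;0\in Y_m^-}$ for each $m$. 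Finally I would let $m\to\infty$: the events $\{Y_m^-\cap\Int L\neq\emptyset,\;0\in Y_m^-\}$ increase in $m$, and since $\Prob{0\in Y_m^-}\to1$ the event $\bigcup_m\{0\in Y_m^-\}$ has full probability, so this union coincides almost surely with $\bigcup_m\{Y_m^-\cap\Int L\neq\emptyset\}=\{Z\cap\Int L\neq\emptyset\}$, where the last equality uses that $\Int L$ is open and $Z=\cl(\bigcup_m Y_m^-)$. Thus $\liminf_{n\to\infty}T_{X_n}(L)\ge T_Z(\Int L)=T_Z(L)$, the last equality being the continuity-set assumption on $L$. Combining this with the upper bound gives $T_{X_n}(L)\to T_Z(L)$ and hence $X_n\dodn Z$.

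I expect the main obstacle to be the asymmetry of hypotheses~(i) and~(ii): (ii) only controls $Y_{m,n}^-$ on the event $\{0\in Y_{m,n}^-\}$, so the lower bound cannot be obtained by a plain convergence-in-distribution argument and must exploit the auxiliary assumption $\Prob{0\in Y_m^-}\to1$, which is exactly what forces the order of limits "$n\to\infty$ then $m\to\infty$". The remaining points are routine: that compact continuity sets of $Y_m^-$ exhaust an open set (standard, since for a fixed compact $C$ all but countably many of the closed neighbourhoods $C+B_\varepsilon$ are continuity sets), together with the monotone-convergence bookkeeping for the events involved.
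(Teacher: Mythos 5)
Your proof is correct and follows the same sandwich-and-iterated-limits skeleton as the paper's: bound $\Prob{X_n\cap L\neq\emptyset}$ between the hitting probability of $Y_{m,n}^-$ restricted to the event $\{0\in Y_{m,n}^-\}$ and that of $Y_{n,m}^+$, let $n\to\infty$ using (i)--(ii), then $m\to\infty$, approximating $L$ from inside and outside. The execution differs in two technical respects, both of which buy a small simplification. For the upper bound the paper builds outer parallel sets $L_k^+\downarrow L$ that are continuity sets for every $Y_m^+$ and applies (i) on those; you instead note that $\{F\in\sF^d:F\cap L\neq\emptyset\}$ is Fell-closed for compact $L$, so the portmanteau theorem gives $\limsup_n\Prob{Y_{n,m}^+\cap L\neq\emptyset}\le\Prob{Y_m^+\cap L\neq\emptyset}$ with no continuity-set hypothesis, and a one-sided inequality is all that is needed there. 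For the $m\to\infty$ step the paper passes through convergence in distribution of $Y_m^\pm$ to $Z$ (again on common continuity sets), whereas you use the exact identities $Z=\bigcap_m Y_m^+$ and $Z=\cl\big(\bigcup_m Y_m^-\big)$ together with monotone convergence of the corresponding events (the finite-intersection property of the nested compacta $Y_m^+\cap L$ on one side, openness of $\Int L$ plus $\Prob{\bigcup_m\{0\in Y_m^-\}}=1$ on the other); this is where the monotonicity implicit in the notation $Y_m^\pm\downarrow/\uparrow Z$ is genuinely used, and you use it correctly. The points you defer as routine --- exhausting $\Int L$ by compact continuity sets of the form $C+B_\eps$ and observing that finite unions of continuity sets are again continuity sets --- are indeed standard and play the role of the paper's inner parallel sets $L_k^-\uparrow\Int L$. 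I see no gaps.
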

\begin{proof}
  Since the family of regular closed compact sets constitutes a
  convergence determining class, see Corollary~1.7.14 in~\cite{mo1},
  it suffices to check that the capacity functional of $X_n$ converges
  to the capacity functional of $Z$ on all compact sets $L$, which are
  regularly closed and are continuity sets for $Z$. There exist
  sequences $(L_k^-)$ and $(L_k^+)$ of compact sets, which are
  continuity sets for $Z$ and all $(Y_m^-)$ and $(Y_m^+)$,
  respectively, and such that $L_k^-\uparrow \Int L$ and
  $L_k^+\downarrow L$ as $k\to\infty$. These sets can be chosen from
  the families of inner and outer parallel sets to $L$, see page~148
  in~\cite{schn2}.

  Then
  \begin{align*}
    \Prob{Y_{m,n}^-\cap L_k^-\neq\emptyset,0\in Y_{m,n}^-}
    \leq \Prob{X_n\cap L\neq\emptyset}
    \leq \Prob{Y_{m,n}^+\cap L_k^+\neq\emptyset}.
  \end{align*}
  Passing to the limit as $n\to\infty$ yields that
  \begin{multline*}
    \Prob{Y_m^-\cap L_k^-\neq\emptyset,0\in Y_{m}^-}
    \leq \liminf_{n\to\infty} \,\Prob{X_n\cap L\neq\emptyset}\\
    \leq \limsup_{n\to\infty} \,\Prob{X_n\cap L\neq\emptyset}
    \leq \Prob{Y_m^+\cap L_k^+\neq\emptyset}.
  \end{multline*}
  Note that the a.s.\ convergence of $Y_m^\pm$ to $Z$ implies the
  convergence in distribution. Sending $m\to\infty$ and using that
  $\Prob{0\in Y_m^-}\to 1$, we conclude
  \begin{displaymath}
    \Prob{Z\cap L_k^-\neq\emptyset}
    \leq \liminf_{n\to\infty} \,\Prob{X_n\cap L\neq\emptyset}
    \leq \limsup_{n\to\infty} \,\Prob{X_n\cap L\neq\emptyset}
    \leq \Prob{Z\cap L_k^+\neq\emptyset}.
  \end{displaymath}
  Finally, sending $k\to\infty$ gives
  \begin{displaymath}
    \Prob{Z\cap \Int L \neq\emptyset}
    \leq \liminf_{n\to\infty} \,\Prob{X_n\cap L\neq\emptyset}
    \leq \limsup_{n\to\infty} \,\Prob{X_n\cap L\neq\emptyset}
    \leq \Prob{Z\cap L\neq\emptyset},
  \end{displaymath}  
  which completes the proof since
  $\Prob{Z\cap L\neq\emptyset}=\Prob{Z\cap \Int L\neq\emptyset}$.
\end{proof}

\begin{proposition}
  \label{prop:marked-sets}
  Let $\Psi_n:=\{(X_1,\xi_1),\dots,(X_n,\xi_n)\}$, $n\in\NN$, be a
  sequence of binomial point processes on
  $(\sK^d_0\setminus\{0\})\times\R^d$ obtained by taking independent
  copies of a pair $(X,\xi)$, where $X$ is a random closed convex set
  and $\xi$ is a random vector in $\R^d$, which may depend on
  $X$. Furthermore, let $\Psi:=\{(Y_i,y_i),i\geq1\}$ be a locally
  finite Poisson process on $(\sK^d_0\setminus\{0\})\times\R^d$ with
  the intensity measure $\mu$. Then
  $n^{-1}\Psi_n:=\{(n^{-1}X_i,\xi_i):i=1,\dots,n\}$ converges in
  distribution to $\Psi$ if and only if the following convergence
  takes place
  \begin{equation}
    \label{eq:10v}
    n\Prob{n^{-1}X\not\subset L,\xi\in B}=n\Prob{(n^{-1}X,\xi)\in \sA_L^c \times B}
    \to\mu(\sA_L^c \times B) \quad \text{as}\; n\to\infty,
  \end{equation}
  for every $\mu$-continuous set
  $\sA_L^c \times B\subset (\sK^d_0\setminus\{0\})\times \R^d$, where
  \begin{displaymath}
    \sA_L:=\{A\in\sK^d_0\setminus\{0\}: A\subset L\},
  \end{displaymath}
  and $L\in\sK^d_0\setminus\{0\}$ is an arbitrary compact convex set
  containing the origin and which is distinct from $\{0\}$.
\end{proposition}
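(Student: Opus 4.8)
The claim is a Poisson limit theorem for marked binomial point processes, and the plan is to reduce it, via the classical characterisation of Poisson convergence of i.i.d.\ point arrays, to the statement that the family $\{\sA_L^c\times B\}$ is convergence-determining for the vague topology on $E:=(\sK^d_0\setminus\{0\})\times\R^d$. First I would record that $E$ is locally compact, second countable and metrisable: the Hausdorff metric makes $\sK^d_0$ a separable metric space, the Blaschke selection theorem shows that $\{A\in\sK^d_0:A\subset B_R\}$ is compact for each $R>0$, whence $\sK^d_0$ is locally compact, and $\sK^d_0\setminus\{0\}$ is an open subset of it, hence again locally compact.

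Write $\nu_n$ for the law of the pair $(n^{-1}X,\xi)$ on $E$. Because the pairs $(n^{-1}X_i,\xi_i)$, $i=1,\dots,n$, are i.i.d., for every Borel $C\subset E$ one has $\Prob{n^{-1}\Psi_n(C)=0}=(1-\nu_n(C))^n$ and $\E\,n^{-1}\Psi_n(C)=n\nu_n(C)$; by the standard criterion for convergence of point processes to a Poisson limit on a locally compact second countable Hausdorff space (see, e.g., \cite{mo1}), $n^{-1}\Psi_n=\sum_{i=1}^n\delta_{(n^{-1}X_i,\xi_i)}$ converges in distribution to $\Psi$ if and only if $n\nu_n\tovaguen\mu$, that is, $n\nu_n(C)\to\mu(C)$ for every relatively compact Borel $C\subset E$ with $\mu(\partial C)=0$; the requisite infinitesimality condition is automatic for i.i.d.\ arrays. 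So it remains to show that $n\nu_n\tovaguen\mu$ is equivalent to \eqref{eq:10v}.

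For this I would exploit the elementary facts that $\sA_L=\{A:A\subset L\}$ is closed, $\sA_L^c$ is open, and $\sA_{L_1}\cap\sA_{L_2}=\sA_{L_1\cap L_2}$ (a convex set lies in $L_1$ and in $L_2$ precisely when it lies in $L_1\cap L_2$). Although $\sA_L^c$ is not itself relatively compact in $E$, if $0\in\Int L_1\subset L_2$ — so that $A\not\subset L_1$ forces $A$ to reach out to distance at least the inradius of $L_1$ — and $B_1\subset B_2$ are balls in $\R^d$, then the shell $(\sA_{L_2}\setminus\sA_{L_1})\times(B_2\setminus B_1)$ is relatively compact in $E$. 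Letting $L_1,L_2$ range over dilates of finitely many fixed smooth convex bodies and $B_1,B_2$ over balls (so as to meet only $\mu$-null boundaries), one checks that these shells form a dissecting, convergence-determining class for the vague topology on $E$; and by inclusion--exclusion the $n\nu_n$-measure, respectively the $\mu$-measure, of such a shell is a finite alternating sum of the quantities $n\nu_n(\sA_L^c\times B)=n\Prob{n^{-1}X\not\subset L,\xi\in B}$, respectively $\mu(\sA_L^c\times B)$. Hence \eqref{eq:10v} yields $n\nu_n(C)\to\mu(C)$ on all such shells; it also yields the tightness (no escape of mass to infinity in the $\sK^d_0\setminus\{0\}$ factor) needed to upgrade this to $n\nu_n\tovaguen\mu$, since applying \eqref{eq:10v} with $L=B_R$ and letting $R\to\infty$ — so that $\sA_{B_R}^c\downarrow\emptyset$ and $\mu(\sA_{B_R}^c\times B)\downarrow0$ — forces $\limsup_n n\Prob{n^{-1}X\not\subset B_R,\xi\in B}\to0$; escape of mass toward $\{0\}$ is irrelevant since $\{0\}\notin E$. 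Conversely, $n\nu_n\tovaguen\mu$ together with this tightness gives back \eqref{eq:10v}, and rewriting $n\nu_n(\sA_L^c\times B)=n\Prob{n^{-1}X\not\subset L,\xi\in B}$ completes the proof.

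The step I expect to be the main obstacle is the verification that the shells generated by the $\sA_L$ form a dissecting, convergence-determining class for the vague topology on $(\sK^d_0\setminus\{0\})\times\R^d$: this requires some care with the geometry of the Hausdorff metric — in particular that even a possibly lower-dimensional point of $\sK^d_0\setminus\{0\}$ (a segment, say, which is exactly what occurs when this proposition is applied) has arbitrarily small Hausdorff neighbourhoods trapped between finite unions of such shells — together with the bookkeeping that separates the genuine vague convergence from the no-escape-of-mass statement.
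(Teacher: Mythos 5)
Your overall reduction coincides with the paper's first step: by Grigelionis' theorem for i.i.d.\ arrays, $n^{-1}\Psi_n\dodn\Psi$ is equivalent to convergence of the intensity measures $n\nu_n$ to $\mu$ on a suitable class of $\mu$-continuity sets, so everything hinges on showing that the sets $\sA_L^c\times B$ generate a convergence-determining class. The paper settles that second step differently from you: it normalises $\mu_n$ restricted to $\sA_{L_0}^c\times\R^d$ (with $L_0=B_\eps$) to a probability measure, views it as the law of a random compact convex set in $\sK^{d+1}$, and invokes Theorem~1.8.14 of \cite{mo1} --- the containment-functional characterisation of weak convergence of random compact convex sets --- together with the identity $\sA_L\cap\sA_{L_0}=\sA_{L\cap L_0}$ and an inclusion--exclusion computation to translate containment probabilities into the quantities appearing in \eqref{eq:10v}.

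The gap in your proposal is exactly the step you flag as the main obstacle: the claim that the shells $(\sA_{L_2}\setminus\sA_{L_1})\times(B_2\setminus B_1)$ form a dissecting, convergence-determining class is asserted (``one checks that\dots'') but not proved, and in the form you state it, it fails. If $L_1,L_2$ range only over dilates of finitely many fixed convex bodies, the resulting shells resolve only finitely many radial functionals of $A$ (with balls alone, essentially only the circumradius), so they cannot separate points of $\sK^d_0\setminus\{0\}$; and a single shell $\sA_{L_2}\setminus\sA_{L_1}$ is never a small Hausdorff neighbourhood of a given $A_0$, since it contains sets protruding from $L_1$ in directions unrelated to $A_0$. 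To make your route work one needs $L$ to range over a family of convex bodies rich enough to sandwich every Hausdorff ball by finite Boolean combinations of the $\sA_L$'s; for a possibly lower-dimensional $A_0$ this means encoding both $A\subset A_0+B_\eps$ (one containment) and $A_0\subset A+B_\eps$ (a finite collection of carefully chosen non-containments adapted to the geometry of $A_0$). That construction is precisely the content of the containment-functional theorem the paper cites, so either cite it or carry the construction out; as written the argument is incomplete at its crux. The remaining ingredients of your outline --- relative compactness of the shells when $0\in\Int L_1$, the inclusion--exclusion bookkeeping based on $\sA_{L_1}\cap\sA_{L_2}=\sA_{L_1\cap L_2}$, and the no-escape-of-mass statement obtained from \eqref{eq:10v} with $L=B_R$ and $R\to\infty$ --- are sound.
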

\begin{proof}
  By a simple version of the Grigelionis theorem for binomial point
  processes (see, e.g., Proposition~11.1.IX in~\cite{dal:ver08} or
  Corollary~4.25 in~\cite{kalle17} or Theorem~4.2.5 in~\cite{mo1}),
  $n^{-1}\Psi_n\dodn\Psi$ if and only if
  \begin{equation}
    \label{eq:10v1}
    \mu_n(\sA\times B):= n\Prob{(n^{-1}X,\xi)\in \sA\times B}
    \to\mu(\sA\times B) \quad \text{as}\;n\to\infty,
  \end{equation}
  for all Borel $\sA$ in $\sK^d_0\setminus\{0\}$ and Borel $B$ in $\R^d$,
  such that $\sA\times B$ is a continuity set for $\mu$.

  Thus, we need to show that convergence \eqref{eq:10v1} follows from
  \eqref{eq:10v}. In other words, we need to show that the family of
  sets of the form $\sA_L^c \times B$ is a convergence determining
  class.

  Fix some $\eps>0$, and let $L_0:=B_\eps\subset \R^d$ be the closed
  centred ball of radius $\eps$. It is always possible to ensure that
  $\sA_{L_0}^c\times B$ is a continuity set for $\mu$. For each Borel
  $\sA$ in $\sK^d_0\setminus\{0\}$, put
  \begin{equation*}
    \label{eq:tildemu}
    \tilde{\mu}_n(\sA\times B)
    :=\frac{\mu_n\big((\sA\cap\sA_{L_0}^c)\times B\big)}
    {\mu_n\big(\sA_{L_0}^c\times \R^d\big)},\quad  n\geq 1,
  \end{equation*}
  and define $\tilde{\mu}$ by the same transformation applied to
  $\mu$.  Then $\tilde{\mu}_n$ is a probability measure on
  $(\sK^d_0\setminus\{0\})\times\R^d$, and so on $\sK^d\times\R^d$.
  Thus, $\mu_n$ defines the distribution of a random closed convex set
  $Z_n\times\{\zeta_n\}$ in $\sK^d\times\R^d$, which we can regard as
  a subset of $\sK^{d+1}$.

  Assume that we have shown that $\tilde{\mu}_n$ converges in distribution to
  $\tilde{\mu}$ as $n\to\infty$. Then \eqref{eq:10v} implies
  \eqref{eq:10v1}. Indeed, it obviously suffices to assume in
  \eqref{eq:10v1} that $\sA$ is closed in the Hausdorff metric and is
  such that $\sA\times B$ is a $\tilde{\mu}$-continuous
  set. Then there exists an $\eps>0$ such that each $A\in\sA$ is not a
  subset of $L_0=B_\eps$. Then $\sA\cap\sA_{L_0}^c=\sA$, so that
  \begin{displaymath}
    \frac{\mu_n(\sA\times B)}{\mu_n(\sA_{L_0}^c\times \R^d)}
    =\tilde{\mu}_n(\sA\times B)
    \;\;\to\;\;
    \tilde{\mu}(\sA\times B)
    =\frac{\mu(\sA\times B)}{\mu(\sA_{L_0}^c\times\R^d)}
    \quad \text{as}\; n\to\infty.
  \end{displaymath}
  Since the denominators also converge in view of \eqref{eq:10v} we
  obtain \eqref{eq:10v1}.

  In order to check that $\tilde{\mu}_n$ converges in distribution to
  $\tilde{\mu}$ we shall employ Theorem~1.8.14 from
  \cite{mo1}. According to the cited theorem $\tilde{\mu}_n$ converges
  in distribution to $\tilde{\mu}$ if and only if
  $\tilde{\mu}_n(\sA_L\times B)\to\tilde{\mu}(\sA_L\times B)$ for all
  $L\in\sK^d$ and compact convex $B$ in $\R^d$ such that
  $\sA_L\times B$ is a continuity set for $\tilde{\mu}$ and
  $\tilde{\mu}(\sA_L\times B)\uparrow 1$ if $L$ and $B$ increase to
  the whole space. The latter is clearly the case, since $\Psi$ has a
  locally finite intensity measure, hence, at most a finite number of
  its points intersects the complement of the centred ball $B_r$ in
  $\R^{d+1}$ for any $r>0$. For the former, note that, for every
  $L\in\sK^d\setminus \{0\}$,
  \begin{align*}
    \tilde{\mu}_n(\sA_L\times B)
    &=\frac{\mu_n\big(\sA_{L_0}^c\times B\big)
      -\mu_n\big((\sA^c_L\cap \sA_{L_0}^c)\times
      B\big)}{\mu_n\big(\sA_{L_0}^c\times \R^d\big)}
      =\frac{\mu_n\big((\sA^c_L\cup \sA_{L_0}^c)\times
      B\big)-\mu_n\big(\sA^c_L\times B\big)}
      {\mu_n\big(\sA_{L_0}^c\times \R^d\big)}\\
    &=\frac{\mu_n\big((\sA_L\cap \sA_{L_0})^c\times B\big)-\mu_n\big(\sA^c_L\times
      B \big)}{\mu_n(\sA_{L_0}^c\times \R^d\big)}
      =\frac{\mu_n\big(\sA^c_{L\cap L_0}\times B\big)-\mu_n\big(\sA^c_L\times B\big)}
      {\mu_n\big(\sA_{L_0}^c\times \R^d\big)}\\
    &\to \frac{\mu\big(\sA^c_{L\cap L_0}\times B\big)-\mu\big(\sA^c_L\times B\big)}
      {\mu\big(\sA_{L_0}^c\times \R^d\big)}
      =\tilde{\mu}(\sA_L\times B)\quad \text{as}\; n\to\infty,
   \end{align*}
   where the convergence in the last line follows
   from~\eqref{eq:10v}. The proof is complete.
 \end{proof}
 
\section*{Acknowledgement}

AM was supported by the National Research Foundation of Ukraine
(project 2020.02/0014 ``Asymptotic regimes of perturbed random walks:
on the edge of modern and classical probability'').  IM 
was supported by the Swiss Enlargement Contribution in the
framework of the Croatian--Swiss Research Programme (project number
IZHRZ0\_180549).

\let\oldaddcontentsline\addcontentsline
\renewcommand{\addcontentsline}[3]{}


\begin{thebibliography}{10}

\bibitem{Bezdek_et_al:2007}
K.~Bezdek, Z.~L{\'a}ngi, M.~Nasz{\'o}di, and P.~Papez.
\newblock Ball-polyhedra.
\newblock {\em Discrete \& Computational Geometry}, 38(2):201--230, 2007.

\bibitem{MR3767739}
J.~M. Borwein and O.~Giladi.
\newblock Convex analysis in groups and semigroups: a sampler.
\newblock {\em Math. Program.}, 168(1-2, Ser. B):11--53, 2018.

\bibitem{dal:ver08}
D.~J. Daley and D.~Vere-Jones.
\newblock {\em An Introduction to the Theory of Point Processes. Vol. {II}:
  General Theory and Structure}.
\newblock Springer, New York, 2 edition, 2008.

\bibitem{Fodor+Kevei+Vigh:2014}
F.~Fodor, P.~Kevei, and V.~V{\'\i}gh.
\newblock On random disc polygons in smooth convex discs.
\newblock {\em Advances in Applied Probability}, 46(4):899--918, 2014.

\bibitem{Fodor+Papvari+Vigh:2020}
F.~Fodor, D.~I. Papv{\'a}ri, and V.~V{\'\i}gh.
\newblock On random approximations by generazlised disc-polygons.
\newblock {\em Mathematika}, 66(2):498--513, 2020.

\bibitem{gruber2014normal}
P.~Gruber.
\newblock Normal bundles of convex bodies.
\newblock {\em Adv. Math.}, 254:419--453, 2014.

\bibitem{Hall:2003}
B.~Hall.
\newblock {\em {L}ie Groups, {L}ie Algebras, and Representations: An Elementary
  Introduction}.
\newblock Springer, Berlin, 2003.

\bibitem{Hug:1998}
D.~Hug.
\newblock Generalized curvature measures and singularities of sets with
  positive reach.
\newblock {\em Forum Mathematicum}, 10(6):699--728, 1998.

\bibitem{jah:mar:ric17}
T.~Jahn, H.~Martini, and C.~Richter.
\newblock Ball convex bodies in {M}inkowski spaces.
\newblock {\em Pacific J. Math.}, 289(2):287--316, 2017.

\bibitem{kab:mar:tem:19}
Z.~Kabluchko, A.~Marynych, D.~Temesvari, and C.~Th\"{a}le.
\newblock Cones generated by random points on half-spheres and convex hulls of
  {P}oisson point processes.
\newblock {\em Probab. Theory Related Fields}, 175(3-4):1021--1061, 2019.

\bibitem{kalle17}
O.~Kallenberg.
\newblock {\em Random Measures, Theory and Applications}.
\newblock Springer, 2017.

\bibitem{kid:rat06}
M.~Kiderlen and J.~Rataj.
\newblock On infinitesimal increase of volumes of morphological transforms.
\newblock {\em Mathematika}, 53(1):103--127, 2006.

\bibitem{lac10}
R.~Lachi\`eze-Rey.
\newblock {\em Ensembles convexes et mosa\"\i ques al\'eatoires}.
\newblock PhD thesis, Universit\'e Lille 1, Lille, 2010.

\bibitem{MarMol:2021}
A.~Marynych and I.~Molchanov.
\newblock Facial structure of strongly convex sets generated by random samples.
\newblock {\em Adv. Math.}, 395:108086, 2022.

\bibitem{mol15}
I.~Molchanov.
\newblock Continued fractions built from convex sets and convex functions.
\newblock {\em Commun. Contemp. Math.}, 17(5):1550003, 18, 2015.

\bibitem{mo1}
I.~Molchanov.
\newblock {\em Theory of Random Sets}.
\newblock Springer, London, 2 edition, 2017.

\bibitem{Paouris+Pivovarov:2017}
G.~Paouris and P.~Pivovarov.
\newblock Random ball-polyhedra and inequalities for intrinsic volumes.
\newblock {\em Monatshefte f{\"u}r Mathematik}, 182(3):709--729, 2017.

\bibitem{Rockafellar:1972}
R.~Rockafellar.
\newblock {\em Convex Analysis}.
\newblock Princeton University Press, 1972.

\bibitem{schn2}
R.~Schneider.
\newblock {\em Convex Bodies. The {Brunn--Minkowski} Theory}.
\newblock Cambridge University Press, Cambridge, 2 edition, 2014.

\bibitem{sch:weil08}
R.~Schneider and W.~Weil.
\newblock {\em Stochastic and Integral Geometry}.
\newblock Springer, Berlin, 2008.

\end{thebibliography}

\let\addcontentsline\oldaddcontentsline
\end{document}